\documentclass[12pt]{article}
\usepackage{latexsym}
\usepackage{amssymb}
\usepackage{amsmath}
\usepackage{enumerate}
\usepackage{slashed}
\usepackage{mathrsfs}
\usepackage{wasysym}
\usepackage{rawfonts}
\input{prepictex}
\input{pictex}
\input{postpictex}
\usepackage[OT2,OT1]{fontenc}
\def\cyr{%
\renewcommand\rmdefault{wncyr}%
\renewcommand\sfdefault{wncyss}%
\renewcommand\encodingdefault{OT2}%
\normalfont
\selectfont}
%\numberwithin{equation}{section}
\DeclareMathAlphabet{\zap}{OT1}{pzc}{m}{it}
\DeclareTextFontCommand{\textcyr}{\cyr}
\def\be{\begin{equation}}
\def\ee{\end{equation}}
\def\bea{\begin{eqnarray*}}
\def\eea{\end{eqnarray*}}

\def\drc{\slashed{D}}

\def\CC{\mathbb C}

\def\QQ{\mathbb Q}

\newtheorem{main}{Theorem}
\DeclareMathOperator{\End}{End}
\DeclareMathOperator{\Hom}{Hom}
\def\corb{c_1^{orb}}

\DeclareMathOperator{\Ind}{Ind}

\DeclareMathOperator{\tr}{tr}

\newtheorem{thm}{Theorem}[section]
\newtheorem{lem}[thm]{Lemma}
\newtheorem{prop}[thm]{Proposition}
\newtheorem{cor}[thm]{Corollary}
\newtheorem{defn}{Definition}[section]

\newenvironment{proof}{\medskip \noindent
{\bf Proof.}}{\hfill \raisebox{-.2em}{\rule{.7em}{.8em}}
\\}
\newenvironment{xpl}{\mbox{ }\\ {\bf  Example}\mbox{ }}{
\hfill $\diamondsuit$\mbox{}\bigskip}

\def\ZZ{{\mathbb Z}}
\def\RR{{\mathbb R}}
\def\CP{{\mathbb C \mathbb P}}
\begin{document}

\title{Edges, Orbifolds,  and Seiberg-Witten Theory}

\author{Claude LeBrun\thanks{Research supported in part
by NSF grant DMS-1205953.}}

\date{May 7, 2013} 
  
  \maketitle

 \begin{abstract}	
 Seiberg-Witten theory is used to obtain new obstructions to the existence of 
 Einstein metrics on $4$-manifolds with conical singularities along an embedded surface. 
 In the present article, the cone angle is required to be of the form $2\pi /p$, $p$ 
 a positive integer, but we conjecture that similar results  will also hold in greater generality. 
  \end{abstract}

Recent work on  K\"ahler-Einstein metrics by Chen, Donaldson, Sun,   and others \cite{berman,brendedge,cds0,donedge,jmredge}   
  has elicited wide interest in  the existence 
and uniqueness problems for Einstein metrics with conical singularities along a submanifold of real codimension 2. 
This article will show that Seiberg-Witten theory gives rise to  interesting   
 obstructions to the existence of  $4$-dimensional Einstein metrics with conical singularities
   along a surface.  These results are intimately tied to known phenomena in K\"ahler geometry, and 
        reinforce the overarching  principle  that K\"ahler  metrics 
play a uniquely  privileged role   in $4$-dimensional Riemannian geometry, to a degree  that is  simply unparalleled in other  dimensions.

%For technical reasons, we are currently only able to prove these results
%   for rather special values of the cone angle, but these results do suggest some obvious conjectures
%   that ought to hold in much greater generality.
%   
We  now   recall the definition \cite{atleb} of an edge-cone metric  on a $4$-manifold. 
Let $M$ be a smooth  compact $4$-manifold,  let $\Sigma \subset M$ be a smoothly
embedded  compact surface.
Near any point $p\in \Sigma$, we can thus
find local coordinates $(x^1,x^2,y^1,y^2)$ in which $\Sigma$
is given by $y^1=y^2=0$. Given any such adapted 
coordinate system, we then introduce an associated  
{\em transversal polar coordinate}  system  $(\rho,\theta, x^1, x^2)$
 by setting  $y^1=\rho \cos \theta$ and  $y^2= \rho \sin \theta$.
 Now fix some positive constant $\beta > 0$. 
 An edge-cone metric $g$ of cone angle
$2\pi\beta$ on $(M,\Sigma)$ is a smooth Riemannian metric  on $M-\Sigma$ which 
takes the form 
\begin{equation}
\label{edge}
g= d\rho^2 + \beta^2 \rho^2 (d\theta+ u_jdx^j)^2 + w_{jk} dx^jdx^k  + \rho^{1+\varepsilon} h
\end{equation}
in  a suitable  transversal polar coordinate  system near each point of  $\Sigma$, where
the  symmetric tensor field $h$ on $M$ is required to have {\em infinite conormal regularity} 
along $\Sigma$. This last assumption  means that the components of $h$ in $(x^1,x^2,y^1,y^2)$  coordinates have continuous
derivatives of all orders with respect to collections of smooth  vector fields (e.g.  $\rho ~\partial /\partial \rho$, $\partial /\partial \theta$, 
$\partial/\partial x^1$,  $\partial/\partial x^2$) which have vanishing normal component along $\Sigma$. 
\begin{center}
\mbox{
\beginpicture
\setplotarea x from 30 to 400, y from -10 to 110
{\setquadratic
\plot 92 89  90  52  93 5 /
}
\putrectangle corners at 200 110 and 400 -10
\circulararc 270 degrees from 250 60  center at 240 50
\circulararc -270 degrees from 350 60 center at 360 50
\circulararc 120 degrees from 326 65 center at 300 50 
\circulararc -120 degrees from 326 35  center at 300 50 
\put {$M$} [B1] at 385 -2
\put {$\Sigma$} [B1] at 265 25
\put {$\Sigma$} [B1] at 89 100
\arrow <6pt> [1,2] from 125 50  to 190 50
\circulararc  -180 degrees from 125 50  center at 125 55
{\setquadratic
\plot 350 60 338 57 326 65 /
\plot 350 40 338 43 326 35 /
\plot 250 40 262 43 274 35 /
\plot 250 60 262 57 274 65 /
} 
{\setlinear 
\plot  72 15  82 30   /
\plot   91 15  82 30  /
\plot  72 65  82 80   /
\plot   91 65  82 80  /
\plot  72 40  82 55   /
\plot   91 40  82 55  /
}
\ellipticalarc axes ratio 4:1 180 degrees from  69 15
center at  77 15
\ellipticalarc axes ratio 4:1 180 degrees from  69 65
center at  77 65
\ellipticalarc axes ratio 4:1 180 degrees from  69 40
center at  77 40
\setdashes 
\ellipticalarc axes ratio 4:1 -180 degrees from 69 15
center at 77 15
\ellipticalarc axes ratio 4:1 -180 degrees from 69 65
center at 77 65
\ellipticalarc axes ratio 4:1 -180 degrees from 69 40
center at 77 40
\endpicture
}
\end{center}
Thus, an edge-cone metric $g$ behaves like
a smooth metric in directions parallel to $\Sigma$, but is modelled on 
a $2$-dimensional cone
\begin{center}
\mbox{
\beginpicture
\setplotarea x from 0 to 270, y from 55 to 155
\circulararc 240 degrees from 30 145 center at 30 100 
\circulararc 240 degrees from 30 110 center at 30 100 
\ellipticalarc axes ratio 4:1 180 degrees from 175 85
center at 220 85
{\setlinear 
\plot  30 100 30 145 /
\plot  30 100 69 78  /
\plot  220 145 175 85   /
\plot  220 145 265 85   /
}
\put {$2\pi \beta$} [B1] at 5 85
\put {(identify)} [B1] at 83 110
\arrow <3pt> [1,3] from 26 120  to 26 125
\arrow <3pt> [1,3] from 47 88  to 51  86
\setdashes 
\arrow <3pt> [1,3] from 130 110  to 160 110
\circulararc -120 degrees from 30 130 center at 30 100
\ellipticalarc axes ratio 4:1 -180 degrees from 172 85
center at 216 85
\endpicture
}
\end{center}
in the transverse directions.

An edge-cone  metric $g$ is said to be {\em Einstein} if its
Ricci  tensor $r$ 
  satisfies %the Einstein equation
\begin{equation}
\label{einstein}
r= \lambda g
\end{equation}
on $M-\Sigma$,  where 
 $\lambda$ is an undetermined real constant,  
  called the Einstein constant of $g$.
In other words, an edge-cone metric is Einstein iff 
 it has constant Ricci curvature on the complement of $\Sigma$.

Many  interesting edge-cone metrics  arise as orbifolds metrics. 
Suppose that $M$ is a smooth  oriented $4$-manifold and that $\Sigma\subset M$
is a smooth  oriented surface. By the tubular neighborhood theorem, $\Sigma$ therefore
has a neighborhood which is diffeomorphic to the unit disk bundle in a complex line bundle 
$L\to \Sigma$. Choosing some conformal structure on $\Sigma$, we can then introduce local 
complex coordinates $(w,z)\in \CC^2$ near any point of $\Sigma$, so that $z=0$ is a local defining function for 
$\Sigma$, and such that $|z|< 1$ corresponds to the given tubular neighborhood. Now choose
a natural number $p\geq 2$, and set $\beta = 1/p$. If we may then introduce an auxiliary 
complex coordinate $\zeta$ by declaring\footnote{Here we have divided by $|\zeta|^{p-1}$ in order to ensure that $|z|=|\zeta|$. 
Later, however,  it will 
 sometimes be useful to ignore this factor,  instead viewing it as representing a fixed self-homeomorphism
of $M$ that stretches our tubular neighborhood  radially, away from $\Sigma$.}
 that $z= \zeta^p/|\zeta|^{p-1}$. 
This allows us to 
think of $\CC^2$ as $\CC^2/\ZZ_p$, where the action of $\ZZ_p$ on $\CC^2$ is generated by 
$(w,\zeta)\mapsto (w, e^{2\pi i/p}\zeta)$, and we can thus equip $M$ with an orbifold structure
by supplementing out original smooth atlas on $M-\Sigma$ with multi-valued coordinates 
$(x^1, x^2, x^3, x^4)$ near points of $\Sigma$, where $(w, \zeta)=(x^1+ix^2, x^3+ix^4)$.  
Following standard practice in discussing orbifolds \cite{bg,satake}, 
we will use the term {\em local uniformizing chart} to  refer to either a smooth chart 
around a point of $M-\Sigma$, or to one of these  multi-valued coordinate systems on 
a neighborhood of a point of $\Sigma$; the  {\em local uniformizing group} is then 
the trivial group $\{ 1\}$ for a standard smooth chart, or $\ZZ_p$ for a multi-valued chart near a point of $\Sigma$. 
Modulo self-diffeomorphisms of the pair 
$(M, \Sigma)$,
the orbifold  constructed in this way is then independent of choices,   and will be denoted by  $(M,\Sigma, \beta )$, where  $\beta = 1/p$.
An orbifold metric $g$ on $(M,\Sigma, \beta )$  is then an object which is locally represented
near $\Sigma$ as a $\ZZ_p$-invariant Riemannian metric in local uniformizing coordinates, and 
 which, on $M-\Sigma$,  is
a smooth Riemanian metric  in the usual sense. 
An orbifold
metric is said to be Einstein iff equation \eqref{einstein} holds in a locally uniformizing chart near each point. 

If   $\beta  = 1/p$
for some positive integer $p$, 
every orbifold metric on $(M, \Sigma, \beta )$
may  be viewed as an edge-cone metric  on $(M, \Sigma)$
of  cone angle $2\pi\beta$. 
Of course,  the converse is generally false;    our definition allows edge-cone metrics   to have so  little regularity
that   most of them  do not even have bounded curvature as one approaches $\Sigma$.  
However, the  converse {\em does} hold in the Einstein case: 
Proposition \ref{reggie}, proved in \S \ref{regal} below,   shows that,  when  $\beta  = 1/p$, every 
edge-cone
Einstein  metric of cone angle $2\pi \beta$   is simply an orbifold Einstein metric  described  from an alternate perspective. This fact
will allow us to obtain results concerning Einstein-edge cone metrics with $\beta  = 1/p$ by applying ideas from the theory of  orbifolds.

 In the author's  joint paper with Atiyah \cite{atleb}, topological 
  obstructions to the existence of  Einstein edge-cone metrics  were obtained. These  obstructions, 
  which generalize the
 Hitchin-Thorpe inequality \cite{bes,hit,tho} for non-singular Einstein metrics, 
 are stated purely in terms of    homeomorphism invariants of 
 the pair $(M,\Sigma)$. 
 While these obstructions do reflect   several peculiar features of $4$-dimensional Riemannian geometry,
there is every reason to believe that they  barely  scratch the surface of what must actually  be true. For example, in the non-singular
 case, there are also obstructions \cite{lmo,lno,lric} to the existence of Einstein  metrics which 
 depend on {\em diffeomorphism} invariants rather than on homeomorphism invariants. 
 The purpose of this paper is to  explore the manner in which these results, which are proved using Seiberg-Witten theory, 
 generalize to the edge-cone setting. Our main result is the following:

  \begin{main} \label{non}
  Let $X$ be a smooth compact oriented $4$-manifold, and let $\Sigma\subset X$ be a smooth compact
  oriented embedded surface. Fix some integer $p\geq 2$, and set $\beta = 1/p$. Suppose that 
  $X$ admits a symplectic form $\omega_0$  for which $\Sigma$ is a symplectic submanifold, and such that 
   \begin{equation}
  \label{common}
(c_1 (X)+ (\beta -1) [\Sigma ])\cdot [\omega_0 ] <  0
\end{equation}
where  $[\omega_0 ], c_1(X),  [\Sigma] \in H^2(X)$ are respectively the deRham class of $\omega$, 
 the first Chern class  of $(X, \omega)_0$, 
 and the Poincar\'e dual of  $\Sigma$. Choose 
a non-negative  integer  $\ell$   such that 
   \begin{equation}
\label{blownup}
\ell \geq \frac{1}{3} (c_1(X) + (\beta -1)[\Sigma ])^2,
\end{equation}
 let $M \approx X\#\ell \overline{\CP}_2$ 
  be the manifold obtained by blowing $X$  up at $\ell$ points of $X-\Sigma$,
   and notice that $\Sigma$ can  also be viewed as a submanifold of $M$. 
 Then $(M,\Sigma)$ does not carry any   Einstein  edge-cone metrics of cone angle $2\pi\beta$.
     \end{main}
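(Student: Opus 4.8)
The plan is to reduce the assertion to the non-existence of an \emph{orbifold} Einstein metric on $(M,\Sigma,\beta)$, and then to run a Seiberg--Witten curvature estimate on the orbifold in parallel with the smooth case. By Proposition~\ref{reggie}, any edge-cone Einstein metric of cone angle $2\pi\beta=2\pi/p$ on $(M,\Sigma)$ is in fact an orbifold Einstein metric on $(M,\Sigma,\beta)$, so it is enough to rule the latter out. Starting from the given $\omega_0$, for which $\Sigma$ is a symplectic submanifold, a standard interpolation supported near $\Sigma$ produces an orbifold symplectic form on $(X,\Sigma,\beta)$ --- and, after blowing up, on $(M,\Sigma,\beta)$ --- whose orbifold first Chern class is $\corb = c_1(X)+(\beta-1)[\Sigma]$, the log-canonical correction appropriate to cone angle $2\pi/p$; thus \eqref{common} says exactly that $\corb\cdot[\omega_0]<0$. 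Invoking the orbifold version of Taubes' theorem (the Seiberg--Witten equations, the Weitzenb\"ock formula, and the relevant compactness and transversality all make sense on a compact orbifold; when $b^+=1$ one works in the chamber of $[\omega_0]$), the canonical $\mathrm{Spin}^c$ structure has non-zero orbifold Seiberg--Witten invariant, so $\corb$ is a monopole class of $(X,\Sigma,\beta)$; and, because the $\ell$ blow-ups are performed away from $\Sigma$, the orbifold blow-up formula then shows that $\corb(X)+\sum_{i=1}^{\ell}\epsilon_iE_i$ is a monopole class of $(M,\Sigma,\beta)$ for every choice of signs $\epsilon_i\in\{\pm1\}$, where the $E_i$ are the exceptional classes.

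Next I would assemble the topology. Orbifold Gauss--Bonnet and signature (Satake; made explicit for edge-cone metrics in \cite{atleb}) give $\chi_{\mathrm{orb}}(M,\Sigma,\beta)=\chi(M)-(1-\beta)\chi(\Sigma)$ and $\tau_{\mathrm{orb}}(M,\Sigma,\beta)=\tau(M)-\tfrac13(1-\beta^2)[\Sigma]^2$; combining these with the adjunction formula and the identity $c_1^2=2\chi+3\tau$, valid for any almost-complex $4$-manifold, one computes $(2\chi_{\mathrm{orb}}+3\tau_{\mathrm{orb}})(X,\Sigma,\beta)=(\corb(X))^2$, and hence, since each blow-up decreases $2\chi+3\tau$ by one,
\[
(2\chi_{\mathrm{orb}}+3\tau_{\mathrm{orb}})(M,\Sigma,\beta)=(\corb(X))^2-\ell .
\]
For an orbifold Einstein metric the integrand $\tfrac{s^2}{24}+2|W^+|^2$ occurring in $(2\chi_{\mathrm{orb}}+3\tau_{\mathrm{orb}})=\tfrac1{4\pi^2}\int_M(\tfrac{s^2}{24}+2|W^+|^2)\,dV$ (the trace-free Ricci term $\ro$ drops out) is non-negative, so if $(\corb(X))^2\le 0$ the displayed identity already forces $\ell=0=(\corb(X))^2$ and the metric to be scalar-flat anti-self-dual, hence Ricci-flat K\"ahler with $\corb=0$ --- contradicting \eqref{common}. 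We may therefore assume $(\corb(X))^2>0$, so that \eqref{blownup} gives $\ell\ge 1$.

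Now suppose, for contradiction, that $(M,\Sigma,\beta)$ carried an orbifold Einstein metric $g$. LeBrun's refined Seiberg--Witten curvature estimates (see \cite{lric}) --- the scalar-curvature estimate together with its Weyl-curvature refinement --- are local and integral in nature, hence hold verbatim for the orbifold metric $g$ and for each orbifold monopole class $a=\corb(X)+\sum_i\epsilon_iE_i$, and when combined with the Gauss--Bonnet/signature formula above they yield $(2\chi_{\mathrm{orb}}+3\tau_{\mathrm{orb}})(M,\Sigma,\beta)\ge\tfrac23(a^+)^2$. Averaging this bound over the $2^\ell$ sign choices kills the cross-terms and leaves $(a^+)^2\ge((\corb(X))^+)^2\ge(\corb(X))^2$ for an optimal choice of signs, so $(\corb(X))^2-\ell\ge\tfrac23(\corb(X))^2$, i.e.\ $\ell\le\tfrac13(\corb(X))^2$. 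With \eqref{blownup} this forces $\ell=\tfrac13(\corb(X))^2$ and equality everywhere; but equality in LeBrun's estimate forces $g$ to be K\"ahler--Einstein with negative Einstein constant, so $(M,\Sigma,\beta)$ is then a minimal orbifold of general type with exactly the two basic classes $\pm\corb(M)$ --- which contradicts the at least $2^{\ell+1}\ge 4$ basic classes produced above by the blow-up formula, since $\ell\ge 1$. This is the desired contradiction.

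The step I expect to be the main obstacle is the Seiberg--Witten input: verifying carefully that Taubes' existence theorem and the blow-up formula genuinely hold for the orbifold $(M,\Sigma,\beta)$ --- including the $b^+=1$ subtlety about chambers --- and that LeBrun's Weitzenb\"ock-based curvature inequalities transfer to orbifold metrics without loss. The remaining ingredients --- the orbifold Euler characteristic, signature and adjunction computations, and the equality-case analysis --- are routine once that foundation is in place.
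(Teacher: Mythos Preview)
Your overall strategy is the paper's: reduce to orbifold Einstein metrics via Proposition~\ref{reggie}, invoke the orbifold Taubes theorem (Theorem~\ref{crux}) for non-vanishing Seiberg--Witten invariant, flip signs on the exceptional classes to optimize the monopole class, and combine the resulting curvature estimates with the orbifold Gauss--Bonnet formula~\eqref{urbi} to bound $\ell$ against $\tfrac{1}{3}(\corb(X))^2$.

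The one genuine gap is your handling of the boundary case $\ell=\tfrac{1}{3}(\corb(X))^2$. You assert that equality forces $g$ to be K\"ahler--Einstein and that such an orbifold then has exactly the two basic classes $\pm\corb(M)$, contradicting the $2^{\ell+1}$ produced by blow-up. But the ``exactly two basic classes'' statement is already a substantial theorem in the smooth case (minimal general type), and you have not justified it for orbifolds; nor is it clear that the K\"ahler--Einstein complex structure $J_g$ makes the topologically-given exceptional spheres holomorphic, so the passage from ``K\"ahler--Einstein'' to ``minimal'' is not automatic either.

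The paper sidesteps this entirely by proving the inequality is \emph{strict} (Proposition~\ref{tinker}). The passage from $\int(s-\sqrt{6}|W_+|)^2\,d\mu \ge 72\pi^2(\corb(X))^2$ to the $\tfrac{2}{3}$-inequality uses the triangle inequality followed by Cauchy--Schwarz against the fixed vector $(1,1/\sqrt{8})$; equality in Cauchy--Schwarz would force $\|s\|_{L^2}^2=384\,\|W_+\|_{L^2}^2$, whereas equality in the underlying Seiberg--Witten estimate forces $g$ K\"ahler, for which $\|s\|_{L^2}^2=24\,\|W_+\|_{L^2}^2$. These are incompatible, so equality never occurs and $\ell<\tfrac{1}{3}(\corb(X))^2$ follows directly, with no basic-classes argument needed. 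Alternatively, Theorem~\ref{geometrize} rules out equality by a cleaner device than the one you propose: if equality held for one sign choice $(\epsilon_i)$, it would also hold for $(-\epsilon_i)$, giving two saturated almost-K\"ahler structures on $(M,g)$ with identical $(\corb)^+$ but Chern classes differing by $2\sum_i\epsilon_i$; the symplectic forms must then coincide, forcing $\sum_i\epsilon_i=0$ and hence $\ell=0$.
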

   
 It should be emphasized  that the symplectic form $\omega_0$  plays a purely auxiliary role here,   and merely guarantees that a certain  
  differential-topological invariant is non-trivial;  the putative 
  Einstein metrics under discussion here are emphatically {\em not} assumed to satisfy any local condition involving $\omega_0$. 
        The constraint $\beta  = 1/p$ on the cone angle is essential for our proof, but is presumably just an artifact of our method. 
 In any case,   we will see  in \S \ref{concrete} below that 
Theorem \ref{non}   obstructs the existence of orbifold Einstein metrics in
  concrete circumstances where the  results of \cite{atleb}
do not lead to such a conclusion.

 Notice that, since $\beta = 1/p  < 1$ and 
 $\int_\Sigma \omega_0 > 0$,  condition  \eqref{common} automatically holds whenever $c_1(X) \cdot [\omega_0 ] \leq 0$. 
 Results of Taubes \cite{taubes2} and Liu \cite{liu1}  therefore imply  that \eqref{common} automatically holds whenever   $X$ 
is not  a blow-up of  $\CP_2$ or some   ruled surface.  But even  in these exceptional cases, 
  \eqref{common} will still hold whenever   $\Sigma \subset X$ has sufficiently high degree with respect to $[\omega_0 ]$.

  Our proof of Theorem \ref{non}  uses  Proposition \ref{reggie}  to first  reduce
  the problem to a question regarding orbifold  metrics. An orbifold version of Seiberg-Witten theory 
  is then used to obtain  curvature estimates  without imposing the Einstein condition, but assuming
  orbifold regularity. Our main result in this direction is the following:

  \begin{main} \label{oui}
  Let $(X, \omega_0)$ be a compact  symplectic $4$-manifold, 
  and let $\Sigma \subset X$
  be a compact embedded symplectic surface. Let $M \approx X\#\ell \overline{\CP}_2$ 
  be the manifold obtained by blowing up $X$ at $\ell\geq 0$ points that do not belong to $\Sigma$. 
  For a positive integer $p$, 
  set $\beta = 1/p$, and let $(M,\Sigma,\beta )$ 
  be the $4$-dimensional orbifold obtained from $M$ by declaring the total angle around
  $\Sigma$ to be $2\pi\beta$. If 
   \eqref{common} holds, 
  then the curvature of any orbifold Riemannian metric $g$ on $(M,\Sigma,\beta )$ satisfies 
  \begin{eqnarray}
  \label{scalar}
\int_M s^2 d\mu &\geq& 32\pi^2 (c_1 (X)+ (\beta -1) [\Sigma ])^2\\
\label{mixed}
\int_M (s-\sqrt{6}|W_+|)^2 d\mu &\geq & 72\pi^2 (c_1 (X)+ (\beta -1)[\Sigma])^2
\end{eqnarray}
where $s$, $W_+$ and $d\mu$ respectively denote the scalar curvature, self-dual Weyl curvature,
and volume form of $g$. 
Moreover, both inequalities are strict  unless $\ell=0$ and 
the orbifold metric $g$ on $(X,\Sigma,p)$
is K\"ahler-Einstein.
  \end{main}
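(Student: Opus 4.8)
\medskip\noindent\textbf{Proof strategy.}
The plan is to deduce \eqref{scalar} and \eqref{mixed} from an orbifold version of Seiberg--Witten theory, following the scheme that LeBrun developed for smooth $4$-manifolds in \cite{lno,lric}, carried out here in the orbifold category and combined with the convexity trick for blow-ups. \emph{First}, I would produce a non-trivial orbifold monopole class on $(M,\Sigma,\beta)$. Since $\Sigma$ is a symplectic submanifold of $(X,\omega_0)$, the symplectic neighbourhood theorem identifies a tubular neighbourhood of $\Sigma$ with a symplectic disk bundle, and replacing each transverse symplectic disk by a cone of total angle $2\pi\beta$ yields a genuine \emph{orbifold} symplectic form $\omega_\beta$ on $(X,\Sigma,\beta)$, whose de~Rham class differs from $[\omega_0]$ only by a fractional multiple of $[\Sigma]$. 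The canonical ${\rm Spin}^{c}$ structure of $(X,\Sigma,\beta,\omega_\beta)$ has first Chern class $-\bigl(K_X+(1-\beta)[\Sigma]\bigr)=c_1(X)+(\beta-1)[\Sigma]$, a class depending only on the orbifold structure; call it $\Theta$. By the orbifold analogue of Taubes' theorem \cite{taubes2} (using the chamber determined by $\omega_\beta$ when $b_+=1$, which \eqref{common} singles out as the relevant one), the corresponding orbifold Seiberg--Witten invariant is $\pm1$, so $\Theta$ is a monopole class of $(X,\Sigma,\beta)$. Blowing up at $\ell$ points of $X-\Sigma$ affects only the smooth part of the orbifold, so the blow-up formula shows that each of the $2^{\ell}$ classes $\alpha_{\pm}=\Theta\pm E_1\pm\cdots\pm E_{\ell}$ is a monopole class of $(M,\Sigma,\beta)$, where $E_1,\dots,E_{\ell}$ are the exceptional classes.

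\emph{Second}, for an arbitrary orbifold Riemannian metric $g$ on $(M,\Sigma,\beta)$ and an arbitrary monopole class $\alpha$, I would run the Weitzenb\"ock argument for the (possibly perturbed) orbifold Seiberg--Witten equations. Orbifold regularity of $g$ makes the moduli space compact and all the relevant integrals finite, so a solution $(\Phi,A)$ exists; integrating the Weitzenb\"ock formula over $M$ in local uniformizing charts then gives, exactly as in \cite{lric},
\begin{equation}
\int_M s^2\, d\mu \;\geq\; 32\pi^2\,(\alpha^+)^2
\qquad\text{and}\qquad
\int_M \bigl(s-\sqrt{6}\,|W_+|\bigr)^2\, d\mu \;\geq\; 72\pi^2\,(\alpha^+)^2 ,
\end{equation}
where $\alpha^+$ denotes the $g$-self-dual part of the $g$-harmonic representative of $\alpha$, and the second inequality uses the sharp lower bound on the smallest eigenvalue of the action of $W_+$ on self-dual $2$-forms (the origin of the constants $\sqrt6$ and $72$).

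\emph{Third}, I would convert these into the asserted bounds. Because $\Theta$ is the average of the $2^{\ell}$ monopole classes $\alpha_{\pm}$ and $\xi\mapsto(\xi^+)^2$ is convex on $H^2(M;\RR)$, one has $(\Theta^+)^2\leq\max_{\pm}(\alpha_{\pm}^+)^2$; inserting a maximizing $\alpha_{\pm}$ into the displayed inequalities replaces $(\alpha^+)^2$ by $(\Theta^+)^2$. Since $\Theta^2=(\Theta^+)^2-(\Theta^-)^2\leq(\Theta^+)^2$ and $\Theta^2=(c_1(X)+(\beta-1)[\Sigma])^2$, inequalities \eqref{scalar} and \eqref{mixed} follow. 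For the strictness clause: equality in the convexity step forces $\alpha_{\pm}^+$ to be independent of the sign choices, hence $E_j^+=0$ for every $j$; equality in $\Theta^2\leq(\Theta^+)^2$ forces $\Theta^-=0$; and equality in the Seiberg--Witten estimate forces $\nabla\Phi\equiv0$, which --- as in \cite{lno,lric} --- forces $g$ to be K\"ahler with $\Theta$ a negative multiple of its K\"ahler form. The K\"ahler class is then a positive multiple of the orbifold canonical class $-\Theta$, so $(X,\Sigma,\beta)$ is canonically polarized; the remaining equality conditions make $g$ K\"ahler--Einstein, and, since a canonically polarized K\"ahler--Einstein orbifold carries no $(-1)$-curve, $\ell=0$.

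The step I expect to be the real difficulty is \emph{not} the curvature inequalities --- once a non-trivial invariant is in hand, the second step is formally identical to the smooth case --- but rather the two orbifold inputs of the first step: constructing $\omega_\beta$ with exactly the right cohomology class, and establishing that the orbifold Seiberg--Witten invariant is well defined (compactness of the moduli space, together with the blow-up formula in the presence of the $\ZZ_p$ quotient singularities along $\Sigma$) and is computed by Taubes' theorem. Getting the orbifold canonical-class bookkeeping to produce \emph{exactly} $c_1(X)+(\beta-1)[\Sigma]$, rather than some nearby class, is the delicate point on which the whole estimate depends.
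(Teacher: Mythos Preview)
Your overall strategy matches the paper's: establish non-vanishing of an orbifold Seiberg--Witten invariant via an orbifold version of Taubes' theorem (the paper's Theorem~\ref{crux}), derive the basic estimates $\int s^2 \geq 32\pi^2(\alpha^+)^2$ and $\int(s-\sqrt6|W_+|)^2 \geq 72\pi^2(\alpha^+)^2$ for each monopole class (Lemmas~\ref{best} and~\ref{upscale}), and then optimize over the $2^\ell$ classes $\Theta\pm E_1\pm\cdots\pm E_\ell$ to replace $(\alpha^+)^2$ by $\Theta^2$ (Theorem~\ref{geometrize}). Your convexity/averaging device is a clean repackaging of the paper's direct sign-choice argument. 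Two small differences worth noting: the paper constructs the orbifold symplectic form with $[\omega]=[\omega_0]$ exactly, not shifted by a multiple of $[\Sigma]$; and it obtains the $2^\ell$ spin$^c$ structures via self-diffeomorphisms of $M$ sending $E_j\mapsto -E_j$ rather than via a blow-up formula. The Chern-class bookkeeping you flag as delicate is exactly Lemma~\ref{plf}.

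The real gap is in your equality analysis for the \emph{mixed} inequality~\eqref{mixed}. You assert that equality forces $\nabla\Phi\equiv 0$ and hence that $g$ is K\"ahler, but that is only what happens for~\eqref{scalar}. The proof of~\eqref{mixed} uses a rescaled version of the equations together with the Weitzenb\"ock inequality~\eqref{part} for self-dual $2$-forms; equality there only forces $g$ to be \emph{saturated almost-K\"ahler} in the sense of Definition~\ref{fatso}: $\omega$ lies in the lowest eigenspace of $W_+$, the two larger eigenvalues of $W_+$ coincide, and $s+s^*$ is a negative constant. Upgrading this to K\"ahler--Einstein is genuine work that your sketch omits. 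The paper combines the Blair-connection curvature formula~(\ref{bla1}--\ref{bla2}), the constraint $F_{\zap B}^-=0$ coming from $(\corb)^-=0$, and the Gauss--Bonnet-type identity~\eqref{urbi} to deduce $\int_M s^*(s^*-s)\,d\mu \geq 0$; since saturation gives $s^*\leq s/3<0$ while $s^*-s=|\nabla\omega|^2\geq 0$, the integrand is non-positive, forcing $s^*\equiv s$ (hence K\"ahler) and then $\mathring r\equiv 0$.

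Your route to $\ell=0$ is also shaky. Appealing to ``no $(-1)$-curve on a negative K\"ahler--Einstein orbifold'' presupposes the K\"ahler--Einstein conclusion (circular for~\eqref{mixed}) and, more seriously, presupposes that the exceptional spheres are \emph{holomorphic} for the complex structure determined by $g$, which is not given. The paper instead argues as follows: if equality holds for one sign pattern $(\epsilon_1,\dots,\epsilon_\ell)$ then the conditions $[\corb(X,\Sigma,\beta)]^+\cdot\epsilon_j=0$ force equality for the opposite pattern as well, yielding two (almost-)K\"ahler structures whose symplectic forms are the same harmonic multiple of $(\corb)^+$ but whose Chern classes differ by $2\sum_j\epsilon_j$; since the symplectic form determines the Chern class, $\sum_j\epsilon_j=0$ and hence $\ell=0$.
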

  
  One might hope that such curvature estimates will also  hold  for edge-cone metrics,
  and not just for metrics with orbifold regularity. At the moment, however, this is far from obvious, 
  even when $\beta = 1/p$.   Nonetheless, it seems reasonable to hope that results like Theorem \ref{non}
  will in fact also hold for more general cone angles. 
    For the present, however,  we believe that it is useful   to explain what can currently be proved for these special cone angles,
 in the hope  that these results  will 
 provide a reliable  indicator  of what  one might   expect  to hold in greater generality.

\section{Edges and Orbifolds}
\label{regal}
 
Let us  begin our discussion by proving a   regularity result,  inspired by  \cite{cds3},
  which will play a key  role  in what follows. 
 The gist is that,  when $\beta = 1/p$, every  Einstein 
 edge-cone metric is actually an  orbifold metric.

\begin{prop} 
\label{reggie}
Let $g$ be an Einstein edge-cone metric on $(M,\Sigma)$ of cone angle $2\pi \beta$, 
where $\beta = 1/p$ for some positive integer $p$. Then $g$ naturally extends  to $(M, \Sigma, \beta)$
as an orbifold Einstein metric. 
\end{prop}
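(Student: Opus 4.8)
The plan is to work locally near a point of $\Sigma$ and upgrade the assumed conormal regularity of the edge-cone metric to genuine orbifold smoothness, by exploiting ellipticity of the Einstein equations in the right gauge. First I would pass to the branched $p$-fold cover of a transverse disk: in the local uniformizing chart $(w,\zeta)$ with $z=\zeta^p/|\zeta|^{p-1}$, the cone angle $2\pi\beta = 2\pi/p$ becomes $2\pi$, so the pulled-back metric $\tilde g$ extends continuously across the branch locus $\{\zeta=0\}$ and is smooth away from it. The conormal regularity hypothesis on $h$ in \eqref{edge} translates, after this substitution, into the statement that $\tilde g$ is a $C^0$ Riemannian metric on the ball which is $W^{1,q}_{\text{loc}}$ for suitable $q$ and smooth off a codimension-$2$ set; the key point is that the $\rho^{1+\varepsilon}$ term becomes $|\zeta|^{(1+\varepsilon)}$ (up to the radial homeomorphism the author flags in the footnote), which is Hölder but not yet smooth. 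The goal is to show $\tilde g$ is in fact $C^\infty$ on the whole ball.

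Next I would impose a good gauge. The Einstein condition $r = \lambda g$ is diffeomorphism-invariant and hence degenerate elliptic, so I would either pass to harmonic (wave-map / DeTurck) coordinates or work relative to a smooth background orbifold metric $g_0$ and put $\tilde g$ in $g_0$-harmonic gauge, in which the Einstein equations become a quasilinear elliptic system of the schematic form $\Delta_{\tilde g}\tilde g_{ij} = Q(\tilde g, \partial \tilde g) + \lambda \tilde g_{ij}$ with $Q$ quadratic in first derivatives. The passage to harmonic gauge is legitimate here because it only requires solving an elliptic equation with $W^{1,q}$ coefficients, and conormal regularity is more than enough to guarantee that the change of coordinates is at least $C^{1,\alpha}$ and maps the branch locus to itself. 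Once in this gauge, elliptic regularity bootstraps: $C^{1,\alpha}$ coefficients give $C^{2,\alpha}$, hence $C^\infty$, on the complement of $\{\zeta = 0\}$; to cross the branch locus one observes that $\{\zeta=0\}$ has codimension $2$ and $\tilde g$ is already $C^{0}\cap W^{1,q}$ across it, so it is a removable set for the elliptic system — a standard removable-singularity argument (the codimension-$2$ set has zero $W^{1,2}$-capacity, so a $W^{1,q}$ weak solution across it is a genuine weak solution on the whole ball) promotes $\tilde g$ to a smooth solution on the entire uniformizing chart. Finally I would check $\ZZ_p$-equivariance: the whole construction can be carried out $\ZZ_p$-equivariantly (the branched cover, the background metric, and the harmonic gauge can all be chosen $\ZZ_p$-invariant since $g$ was well-defined downstairs), so the resulting smooth metric descends to an orbifold Einstein metric on $(M,\Sigma,\beta)$, which by construction agrees with $g$ on $M-\Sigma$.

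The main obstacle, and the step deserving the most care, is the crossing of the branch locus — i.e., showing that the apparent singularity of $\tilde g$ along $\{\zeta = 0\}$ is removable rather than real. A priori the $\rho^{1+\varepsilon}h$ term could conceivably conspire with the harmonic-gauge change of variables to leave a genuine conical defect; what rules this out is precisely the interplay between (i) the cone angle being exactly $2\pi$ upstairs, so there is no distributional curvature concentrated on $\{\zeta=0\}$ obstructing smoothness, and (ii) the Einstein equation being elliptic, so a weak solution with the mild regularity we have must be smooth. One must verify that conormal regularity of $h$ really does deliver enough integrability of $\partial\tilde g$ (some $L^q$ with $q > 2$, say, controlling the quadratic term $Q$) for the removable-singularity lemma to apply; this is where the quantitative form of the conormal condition in \eqref{edge} — derivatives along vector fields tangent to $\Sigma$, together with the $\rho\,\partial_\rho$ field — gets used in an essential way. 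The rest is a routine, if somewhat lengthy, elliptic bootstrap.
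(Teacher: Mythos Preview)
Your outline matches the paper's proof in its essential architecture: pass to the $p$-fold branched cover so the cone angle becomes $2\pi$, put the metric in harmonic coordinates, invoke elliptic regularity for the Einstein system, and then observe $\ZZ_p$-equivariance (the paper does this last step via geodesic normal coordinates, where the isometric $\ZZ_p$-action becomes linear). The one substantive difference is in how you propose to cross the branch locus. You frame it as a removable-singularity problem---codimension-$2$ sets have zero capacity, so a $W^{1,q}$ weak solution on the complement extends---and you correctly flag control of the quadratic term $Q(g,\partial g)$ as the delicate point.

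The paper takes a more direct and sharper route at exactly this step. Rather than settling for $C^0\cap W^{1,q}$ and then invoking a capacity argument, it computes explicitly---using the conormal regularity of $h$ together with the $\rho^{1+\varepsilon}$ prefactor---that the pulled-back metric components lie in $C^{1,\varepsilon}\cap L^2_2$ on the \emph{entire} uniformizing ball. This stronger regularity buys two things at once: harmonic coordinates exist at the $C^{1,\varepsilon}$ level by DeTurck--Kazdan, and in those coordinates the Einstein equation holds almost everywhere in the classical sense with $C^{0,\varepsilon}$ right-hand side, so a single application of the Ladyzhenskaya--Ural$'$tseva Schauder estimate yields $C^{2,\varepsilon}$ and the bootstrap is underway. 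Your capacity route would also work in principle, but would require a careful quasilinear removable-singularity lemma and enough integrability of $\partial g$ to start the $L^p$ bootstrap (your ``$q>2$'' is not obviously sufficient in dimension $4$); the paper's explicit $C^{1,\varepsilon}$ calculation sidesteps all of that and is both shorter and cleaner than the obstacle you anticipate.
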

\begin{proof} After passing to the ramified cover obtained by introducing the new angular coordinate $\tilde{\theta}= \beta\theta$
and then  setting $x^3=\rho\cos \tilde{\theta}$ and $x^4= \rho \sin \tilde{\theta}$,  
our edge-cone metric \eqref{edge} takes the form 
$$g =  g_0 + \rho^{1+\varepsilon} h$$
where $g_0$ is a smooth metric in $(x^1, x^2, x^3, x^4)$ coordinates, and where the tensor field $h$ has infinite conormal regularity along 
the $x^1x^2$ coordinate plane. 
Now notice that  
\begin{eqnarray*}
\frac{\partial}{\partial x^3} \rho^{1+\varepsilon} h_{jk} &=& (\cos \tilde{\theta})  \frac{\partial}{\partial \rho} \rho^{1+\varepsilon} h_{jk} - 
\frac{\sin \tilde{\theta}}{\rho}  \frac{\partial}{\partial \tilde{\theta}} \rho^{1+\varepsilon} h_{jk}\\&=&
\rho^{\varepsilon} \left[ (\cos \tilde{\theta})\left(
\rho  \frac{\partial}{\partial \rho} + (1+\varepsilon) \right) h_{jk} - (\sin \tilde{\theta}) \frac{\partial}{\partial \tilde{\theta}} h_{jk}
\right]
\end{eqnarray*}
is of class $C^{0,\varepsilon}$, and similar computations for the other first partial derivatives of 
$\rho^{1+\varepsilon} h_{jk}$ show that they, too, belong to $C^{0,\varepsilon}$. 
It follows that 
the  components $g_{jk}$ of $g$  in  
$(x^1, x^2, x^3, x^4)$ coordinates are of class $C^{1,\varepsilon}$. 
Similarly,  on a neighborhood of, say, the origin, 
the second derivatives of $g_{jk}$
are bounded by a constant times $\rho^{\varepsilon -1}$.
Since $\rho^{2(\varepsilon -1)}$ has finite integral  on a disk about  the origin in $\RR^2$, 
this means that, on a neighborhood of the origin in $\RR^4$, the metric
components $g_{jk}$ actually belong to $C^{1,\varepsilon}\cap L^2_2$. 

Since $g$ is of class $C^{1,\varepsilon}$, there exist  harmonic coordinates
$(\tilde{x}^1, \tilde{x}^2, \tilde{x}^3, \tilde{x}^4)$,  which  depend on $(x^1, x^2, x^3, x^4)$ via 
a $C^{2, \varepsilon}$ diffeomorphism  \cite[Lemma 1.2]{det-kaz}, and rewriting the metric in these coordinates does not 
alter the fact that the metric components $g_{jk}$ belong to $C^{1,\varepsilon}\cap L^2_2$. 
In these harmonic coordinates, the Einstein equation \eqref{einstein} now takes the form 
\begin{equation}
\label{harmony} 
\Delta g_{jk} = 2 \lambda g_{jk} + Q_{jk} ( g, \partial g)
\end{equation}
and our hypotheses tell us that this equation is satisfied {\em almost everywhere} in the classical sense. Here the metric Laplacian takes the simplified form  
$$
\Delta = - g^{\ell m} \partial_\ell \partial_m
$$
in our harmonic coordinates  \cite{det-kaz}, while 
 the term $Q_{jk}(g , \partial g)$ is quadratic in first derivatives of $g$, with coefficients expressed in terms of 
$g$ and its inverse, and  so, in our case,  is of class $C^{0,\varepsilon}$. 
Thus, for any choice of $j$ and $k$, the function $u= g_{jk} \in L^2_2\cap L^\infty$ solves an  elliptic equation 
$$
a^{\ell m } \partial_\ell \partial_m  u = f
$$
almost everywhere, 
where the coefficients $a^{\ell m}=-g^{\ell m}$ and the inhomogeneous term  $f=-2 \lambda g_{jk} - Q_{jk} ( g, \partial g)$ both belong to  $C^{0,\varepsilon}$. 
By a classic    result of 
Ladyzhenskaya-Ural$'$tseva \cite[Ch. 3, Theorem 12.1]{lady-ural}, it  follows that the function  $u=g_{jk}$
is actually a function of class $C^{2,\varepsilon}$. In particular, by continuity, $g_{jk}$  must solve \eqref{harmony}  everywhere in the classical sense, so that 
$g$ is a bona fide Einstein metric in our entire coordinate domain. Bootstrapping then allows one to 
conclude that $g_{jk}$ is smooth. Indeed, 
a stronger  result of DeTurck-Kazdan \cite[Theorem 5.2]{det-kaz} actually    tells us that $g$ is 
real analytic in these harmonic coordinates; moreover, it is real analytic  in geodesic normal coordinates as well. 

Since the given $\ZZ_p$ action 
preserved our $C^1$ metric $g$ in the original $(x^1, x^2, x^3, x^4)$  coordinate system, it sends geodesics of our real-analytic metric to other geodesics. 
Hence  this finite group of isometries becomes a group of linear maps of $\RR^4$ in geodesic normal coordinates
centered at a  point of $\Sigma$. Modding out by this action makes this chart into a local uniformizing
chart for $(M, \Sigma, \beta)$. Moreover, the transition functions between two such charts preserves
the real-analytic metric $g$, and is therefore real-analytic. In particular, these charts 
are compatible with our original smooth atlas. 
\end{proof}

While the above proof  has, for notational consistency, only been presented here in dimension $4$, 
one can define an edge-cone metric in arbitrary dimension  \cite{atleb}, and 
it is therefore worth pointing out that 
the same argument works without essential changes in dimension $n$. In this setting, one thus  obtains the same 
regularity result for  Einstein edge-cone  metrics $g$
with cone angle $2\pi /p$  along an arbitrary   submanifold 
$\Sigma^{n-2} \subset M^n$ of  codimension $2$.

\section{Orbifolds, Indices, and All That}

In this section, we explain various simple  facts about orbifolds that will turn out to be vital for our purposes. 
  
  Recall \cite{bg,satake} that a smooth $n$-dimensional orbifold $X$ is a Hausdorff second-countable space,
  equipped with an open covering $\{ U_\mathfrak{J} \}$ and a collection of homeomorphisms $\phi_\mathfrak{J} : U_\mathfrak{J} \to V_\mathfrak{J}/\Gamma_\mathfrak{J}$, 
  where $V_\mathfrak{J} \subset \RR^n$ is an open set and   $\Gamma_\mathfrak{J}< GL(n, \RR)$ is a finite matrix group, such that 
  the transition functions 
  $\phi_{\mathfrak{JK}}:=\phi_\mathfrak{J} \circ \phi_\mathfrak{K}^{-1}: U_\mathfrak{K}\cap U_\mathfrak{J} \to U_\mathfrak{J}\cap U_\mathfrak{K}$  lift as diffeomorphisms $\tilde{\phi}_{\mathfrak{JK}}$ between appropriate regions of $\RR^n$. 
  The multi-valued maps $U_\mathfrak{J}\multimap\to V_\mathfrak{J}\subset \RR^n$ induced by the $\phi_\mathfrak{J}$ are called {\em local uniformizing charts}
  for $X$, while the groups $\Gamma_\mathfrak{J}$ are called the associated {\em local uniformizing groups}. 
  Any orbifold can be written as the disjoint 
  union of its 
  {\em regular set} and its {\em singular set}. The singular set consists of points which correspond to  fixed points of $\Gamma_\mathfrak{J}$ in
  some local uniformizing chart; its complement, the regular set, is open and dense, and  our definition canonically endows the
  regular set with the structure of a   smooth $n$-manifold. 

If $M$ is a smooth compact oriented $4$-manifold,  if $\Sigma\subset M$
is a compact oriented surface, and if  $p\geq 2$ is any integer, we have already observed that we may  endow $M$ with 
an orbifold structure by using the usual smooth charts on $M-\Sigma$, while  modeling
neighborhoods of  point of $\Sigma$ on $\RR^4/\ZZ_p$, where the $\ZZ_p$ action
on $\RR^4 = \CC^2$ is generated by $(z_1,z_2) \mapsto (z_1, e^{2\pi i/p}z_2)$. 
Throughout this article,  the  resulting orbifold is denoted by $(M, \Sigma , \beta)$, where $\beta = 1/p$. 
For these examples,  the singular set is of course $\Sigma$, whereas
$M-\Sigma$ is 
the regular set.

\subsection{DeRham Cohomology and Hodge Theory}     
    
  A tensor field $\psi$ on a smooth orbifold $X$ is an object which is represented by a smooth, $\Gamma_\mathfrak{J}$-invariant tensor field on the co-domain  $V_\mathfrak{J}$
  of each local uniformizing chart, such that 
  these local representatives transform under the transition functions $\tilde{\phi}_{\mathfrak{JK}}$ via the appropriate representation of the Jacobian matrix. 
  In other words, a  tensor field on $X$ is a smooth tensor field on the regular set which  extends  across each singular point 
as a $\Gamma_\mathfrak{J}$-invariant tensor field in a local uniformizing chart.  For example, as we have already seen, 
an orbifold metric is an object which, in locally
uniformizing charts,  is simply a Riemannian metric which is invariant under the action of the locally uniformizing group.

Another  important class of tensor fields consists of the skew-symmetric  covariant tensors, better known as  differential forms.
If $X$ is a smooth orbifold, we thus have, 
for each non-negative integer $k$,
  a  sheaf $\mathcal{E}^k$
of real-valued 
differential $k$-forms on  $X$. This sheaf is fine, in the sense that 
it admits partitions of unity, and hence is acyclic. The usual Poincar\'e lemma, 
combined with averaging over the action of $\Gamma_\mathfrak{J}$, shows that the Poincar\'e lemma
also holds for orbifolds. The orbifold deRham complex therefore provides an acyclic
resolution 
$$0\to \RR \to \mathcal{E}^0  \stackrel{d}{\rightarrow} \mathcal{E}^1\stackrel{d}{\rightarrow} \mathcal{E}^2 \stackrel{d}{\rightarrow} \cdots \stackrel{d}{\rightarrow} \mathcal{E}^k   \stackrel{d}{\rightarrow} \cdots$$
of the constant sheaf. The abstract deRham theorem \cite{wells} thus shows that 
the deRham cohomology of any orbifold computes the \v{C}ech cohomology with real coefficients; and since
any orbifold is locally contractible, this \v{C}ech cohomology  is in turn isomorphic to 
the singular cohomology  of the 
of the underlying topological space,  with real coefficients: 
$$H^k(X, \RR) \cong \check{H}^k (X, \RR)\cong H_{DR}^k(X, \RR):= \frac{\ker d: \mathcal{E}^k(X) \to \mathcal{E}^{k+1}(X)}{\mbox{im } d: \mathcal{E}^{k-1}(X) \to \mathcal{E}^{k}(X)} ~.$$
Cohomology with complex coefficients can similarly be computed using 
complex-valued orbifold forms $\mathcal{E}^\bullet_\CC$:
$$H^k(X, \CC) \cong \check{H}^k (X, \CC)\cong  H_{DR}^k(X, \CC):= \frac{\ker d: \mathcal{E}^k_\CC(X) \to \mathcal{E}_\CC^{k+1}(X)}{\mbox{im } d: \mathcal{E}_\CC^{k-1}(X) \to \mathcal{E}_\CC^{k}(X)} ~.$$
Moreover, the cup product of orbifold deRham classes is induced by the wedge product:
$$[\varphi ] \smile [\psi] = [\varphi \wedge \psi ]~.$$
This can be proved by exactly the same argument one  uses in the  non-singular case \cite[Theorem 14.28]{bottu}. 

It makes perfectly good sense to integrate an $n$-form on a smooth compact oriented connected orbifold $X$,
and  integration  gives rise to  an isomorphism $H^n_{DR}(X) \cong \RR$, exactly as in the  manifold case. 
Of course, the fact that integration is  defined as an operation on cohomology depends on the observation that 
Stokes theorem is also valid for orbifolds.

If $X$ is a smooth compact orbifold, equipped with an orbifold Riemannian metric $g$,
then every deRham class is represented by a unique harmonic representative. To make this precise, 
let us assume, for simplicity, that the  $n$-dimensional compact orbifold $X$ is oriented, which is to say that there is a fixed global choice $d\mu$ of
volume $n$-form which is compatible with the given metric $g$.
We can then  define the Hodge star operator
$$\star :  \mathcal{E}^k(X)\to  \mathcal{E}^{n-k}(X)$$
by requiring that 
$$
\varphi \wedge \star \psi = \langle \varphi , \psi \rangle ~d\mu
$$
for any two $k$-forms $\varphi$ and $\psi$, where $d\mu$ is the 
metric orbifold  $n$-volume form and where the point-wise inner product of forms is 
the one induced by $g$. 
The space of (orbifold) harmonic $k$-forms on $X$ is then defined by 
$$
\mathcal{H}^k(X,g) := \left\{ \varphi \in \mathcal{E}^k(X) ~\Big| ~d\varphi= 0, ~ d(\star \varphi) = 0\right\}.
$$
The {\em Hodge theorem} for orbifolds asserts that the tautological map 
\begin{eqnarray*}
\mathcal{H}^k(X,g) &\longrightarrow& H_{DR}^k(X, \RR)\\
\varphi \qquad &\longmapsto& \qquad [\varphi ]
\end{eqnarray*}
is an isomorphism. The injectivity of this map is elementary;  if $\varphi$ is 
harmonic, it is $L^2$-orthogonal to any exact form, so that 
$$
\int_X \|\varphi + d\psi\|^2 d\mu = \int_X \|\varphi \|^2 d\mu+ \int_X \| d\psi \|^2 d\mu~,
$$
making  $\varphi$  the unique minimizer   of  the $L^2$-norm in its deRham class. 

\subsection{Vector $V$-Bundles}

Just as tensor fields on a manifold are sections of appropriate vector bundles, tensor fields on 
orbifolds are sections of vector $V$-bundles \cite{bg,rotho3,satake}. A vector $V$-bundle over an orbifold $X$ has a total space
$E$ which is an orbifold, and a projection $\varpi : E\to X$ which is a smooth submersion of 
orbifolds. However, in contrast to the situation for ordinary vector bundles, we do not require
this projection to be locally trivial. Instead, we merely require that, for some fixed real or complex
vector space $\mathbf{V}$, there is a system of
 local uniformizing charts $U_\mathfrak{J} \to V_\mathfrak{J}/\Gamma_\mathfrak{J}$ for $X$ which is compatible with a 
 a system of locally uniformizing charts $\varpi^{-1} (U_\mathfrak{J})  \to (V_\mathfrak{J}\times \mathbf{V}) /\Gamma_\mathfrak{J}$ for $E$,
where the action of $\Gamma_\mathfrak{J}$ on $V_\mathfrak{J}\times \mathbf{V}$ is the given action on $V_\mathfrak{J}$ times 
some representation $\varrho: \Gamma_\mathfrak{J} \to \End (\mathbf{V})$; as usual, the transition functions
for $E$ are moreover required to lift to  fiber-wise linear maps. Thus, the fiber $\varpi^{-1}(x)$ over any regular point  $x\in X$ will be a copy of
$\mathbf{V}$. However, if $x\in X$ is a singular point, $\varpi^{-1}(x)$ is  in principle merely  the quotient of  $\mathbf{V}$
by a finite group, and so, in particular, may not even be a vector space. For example, the tangent bundle $TX\to X$
is a $V$-bundle, as are the tensor bundles $(\bigotimes^k TX) \otimes (\bigotimes^\ell T^*X)\to X$ and their
complexifications. 

A {\em section} of a $V$-bundle $\varpi : E\to X$  is a right inverse $f: X\to E$ for $\varpi$ which is locally
represented by  an equivariant smooth function $V_\mathfrak{J}\to \mathbf{V}$. For example, a real tensor field is
exactly a section of  one of the $V$-bundles $(\bigotimes^k TX) \otimes (\bigotimes^\ell T^*X)\to X$; a complex tensor
field is a section of the complexification of one of these bundles. The space of sections of a vector $V$-bundle
$E\to X$ will be denoted by $\mathcal{E}(X,E)$; considering sections over arbitrary open subsets of 
$U\subset X$ gives rise to a sheaf of $X$ which will simply be denoted by $\mathcal{E}(E)$. In the same vein, we will 
use $\mathcal{E}^k(E)$ to denote the sheaf of sections of the $V$-bundle $(\Lambda^kT^*X)\otimes_\RR E$. 

It also makes sense to talk about
connections on vector $V$-bundles. By definition, a connection $\nabla$ on a
vector $V$-bundle $E\to M$ is a linear operator
$$
\nabla : \mathcal{E}(E)\to \mathcal{E}^1 (E)
$$
which in local uniformizing charts is represented by a $\Gamma_\mathfrak{J}$-equivariant connection 
on the trivial vector bundle $V_\mathfrak{J}\times \mathbf{V}$. 

We will need to pay especially close attention to the special case of $V$-line bundles,
which are by definition vector $V$-bundles with  generic fiber $\CC$. 
 For such a $V$-bundle $\varpi : L \to X$, there is thus    a system of
 local uniformizing charts $U_\mathfrak{J} \to V_\mathfrak{J}/\Gamma_\mathfrak{J}$ for $X$ which is compatible with a 
 a system of locally uniformizing charts $\varpi^{-1} (U_\mathfrak{J})  \to (V_\mathfrak{J}\times \CC) /\Gamma_\mathfrak{J}$ for $L$,
where the action of $\Gamma_\mathfrak{J}$ on $V_\mathfrak{J}\times \CC$ is the given action on $V_\mathfrak{J}$ times 
some representation $\Gamma_\mathfrak{J} \to U(1)$. 
Since $\varpi : L\to X$ may not be locally trivial, so we cannot immediately invoke standard machinery to 
define its Chern class. However, 
 tensor products of $V$-line  bundles are again $V$-line  bundles; 
 in particular, if $L\to X$ is any $V$-line  bundle, its tensor powers $L^{\otimes q}$ are also $V$-line  bundles. Now assume the orders $|\Gamma_\mathfrak{J}|$
of the local uniformizing groups are bounded --- as is automatically true  if $X$ is compact. If $q$ is the 
least common multiple of $\{ |\Gamma_\mathfrak{J}| \}$, then $L^{\otimes q}$ is  locally trivial, 
and so is a line bundle in the conventional sense. We may then define the {\em orbifold Chern class} of $L$ 
by 
$$\corb (L) = \frac{1}{q} c_1 (L^{\otimes q})\in H^2 (X, \QQ ).$$
If $L$ happens to be a complex line bundle in the conventional sense, this reproduces its  usual rational Chern class.
Of course, this construction ignores the torsion part of the usual Chern class, but this loss of information will not be 
an issue for present purposes.

The orbifold Chern class of an $V$-line  bundle can also be obtained directly, via Chern-Weil theory.
If $\nabla$ is any connection on the $V$-line bundle $L\to X$, its curvature $F_\nabla$ is 
a closed $2$-form on $X$. Moreover, any other connection on $L$ can be expressed as 
$$\tilde{\nabla} = \nabla + {\zap A}$$
for a unique complex-valued $1$-form ${\zap A}$, and 
the new connection's curvature is expressible in terms of the old one by 
$$F_{\tilde{\nabla}}=  F_\nabla+ d{\zap A}.$$
Thus the deRham class of of $F_\nabla$ is independent of the choice of connection, 
and is therefore an invariant of $L$. Indeed, one can show that 
$$\corb (L ) = [\frac{i}{2\pi} F_{\nabla}] \in H^2_{DR}(M , \CC)$$
where we have identified $H^2(X, \QQ)$ with a subset of $H^2(X,\CC) \cong H^2_{DR}(X, \CC)$
in the  usual way. 
Indeed,  curvature forms are  additive under tensor products,  so it suffices to prove the assertion for the case 
when $L$ is locally trivial; and for locally trivial line bundles,  the usual \v{C}ech-deRham proof \cite[\S III.4]{wells} immediately 
generalizes from manifolds to
orbifolds with only cosmetic changes. 

Since the space of connections on a $V$-line bundle is an affine space modeled on 
$\mathcal{E}^1_\CC$, we shall engage in a standard abuse of notation by referring to  a
connection on $L$ as ${\zap A}$ rather than as $\nabla$; in this context, 
${\zap A}$  is actually to be understood as a connection $1$-form  up on 
$L^\times\subset L$ rather than as an ordinary $1$-form down on $X$. 
We can (and will) also 
restrict our choice of connection $\nabla$ by requiring that it be compatible with 
some fixed Hermitian structure $\langle ~,~\rangle$ on $L$. The curvature of such a connection has vanishing real part, 
and the space of such a connections is an affine space
modeled on 
$\mathcal{E}^1$. This in particular makes it  obvious that $\corb (L)$ belongs to real cohomology, although 
the fact that it is actually belongs to rational cohomology can of course  only be explained by invoking  other ideas.

\subsection{Self-Duality}

If $(X,g)$ is an oriented Riemannian orbifold of dimension $n$, the Hodge star operator
$\star$ may be regarded as a homomorphism 
$$\star : \Lambda^p\to \Lambda^{n-p}$$
between the $V$-bundles of differential forms of complentary degrees, and 
$$\star^2 : \Lambda^p \to \Lambda^p$$
equals  $(-1)^{p(n-p)}$ times the identity. In particular, 
if $n=4$, $\star$ is an involution of the $V$-bundle $\Lambda^2$ of $2$-forms, which 
can therefore be expressed 
as the direct sum
\begin{equation} 
\Lambda^2 = \Lambda^+ \oplus \Lambda^- 
\label{deco} 
\end{equation}
where $\Lambda^\pm$ is the $(\pm 1)$ -eigenspace of 
$\star$. Sections of $\Lambda^+$ (respectively, $\Lambda^-$) are called 
self-dual (respectively, anti-self-dual) $2$-forms. 
The decomposition (\ref{deco}) is, moreover,  {\em conformally invariant},
in the sense that it is left unchanged if the orbifold 
metric $g$ is multiplied by an arbitrary orbifold-smooth positive function. 
Any  $2$-form can thus be uniquely expressed as 
$$\varphi = \varphi^+ + \varphi^-,$$
where $\varphi^\pm \in \Lambda^\pm$, and we then have
$$ \varphi \wedge \varphi = \Big( |\varphi^+|^2 - |\varphi^-|^2\Big) d\mu_g , $$
where $d\mu_g$ denotes the metric volume form associated with our 
 orientation.

The decomposition (\ref{deco}) also leads to a decomposition of the 
 Riemann curvature tensor for any metric on an oriented $4$-dimensional orbifold. Indeed, viewing the curvature tensor
 of $g$ as a  self-adjoint linear map 
 $$
{\mathcal R} : \Lambda^2 \longrightarrow \Lambda^2
$$
we obtain a decomposition
 \begin{equation}
\label{curv}
{\mathcal R}=
\left(
\begin{array}{c|c}&\\
W_++\frac{s}{12}&\mathring{r}\\ 
&\\
\hline 
&\\
\mathring{r} & W_-+\frac{s}{12}\\
&\\
\end{array} \right)  .
\end{equation}
Here $W_+\in \End (\Lambda^+)$ is the trace-free piece of its block, and is the called
the {\em self-dual Weyl curvature} of $(M,g)$; the anti-self-dual
Weyl curvature $W_-$ is defined analogously.
Both of the objects are conformally invariant, with appropriate conformal weights. Note that 
the  {scalar curvature} $s$ is understood to act in (\ref{curv}) by scalar multiplication,
while  the  trace-free  Ricci curvature
$\mathring{r}$ 
acts on 2-forms by
contraction and projection to the alternating piece. 

Now suppose that our    oriented $4$-dimensional Riemannian orbifold
is also {\em compact}. Then  Hodge theory tells us that 
$H^2(M, \RR) \cong \mathcal{H}^2_g$, where 
$$ \mathcal{H}^2_g:=  \mathcal{H}^2(X,g)= \{ \varphi \in \mathcal{E}^2(X)~|~ d\varphi = 0, ~ d (\star \varphi ) =0\}$$
is the space of harmonic $2$-forms on $X$ with respect to the given orbifold metric $g$. However, 
$\star$ is an involution of $\mathcal{H}^2_g$, so we obtain a decomposition 
\begin{equation}
\label{codeco}
H^2(X, \RR) = \mathcal{H}^+_g \oplus \mathcal{H}^-_g
\end{equation}
where 
$$
 \mathcal{H}^\pm_g = \{ \varphi \in \mathcal{E}(X, \Lambda^\pm) ~|~ d\varphi = 0\}
$$
is the space of self-dual (respectively, anti-self-dual) harmonic $2$-forms. 
Since 
$$
\int_ X \varphi \wedge \varphi = \int_X \left(|\varphi^+ |^2 - |\varphi^- |^2\right) d\mu , 
$$
the intersection pairing 
\begin{eqnarray*}
H^2_{DR} (X, \RR) \times H^2_{DR} (X, \RR) &\longrightarrow& \RR\\
(~[\varphi ] ~,~ [\psi ]~) \quad\qquad& \longmapsto&  \int_X \varphi \wedge \psi 
\end{eqnarray*}
is therefore positive-definite on $\mathcal{H}^+$ and negative-definite on $\mathcal{H}^-$,
whereas  $\mathcal{H}^+$ and $\mathcal{H}^-$ are also orthogonal with respect to the 
intersection pairing. 
Since the intersection pairing can  be identified 
with the topologically-defined pairing
 \begin{eqnarray*}
H^2 (X, \RR) \times H^2 (X, \RR) &\longrightarrow& \RR\\
(~\mathbf{a} ~,~ \mathbf{b}~) \quad\qquad& \longmapsto&  \langle \mathbf{a}\smile \mathbf{b}, [X]\rangle 
\end{eqnarray*}
on singular cohomology, it follows that the integers
$$
b_\pm (X) = \dim \mathcal{H}^\pm
$$
are oriented  topological invariants of the Poincar\'e space $X$, and 
one  may then define the {\em signature} of $X$ to be 
$$\tau (X) = b_+(X)-b_-(X).$$
In the special case in which $X= (M, \Sigma, \beta)$, 
so that $X$ and $M$ are homeomorphic as topological spaces, 
$b_\pm (X) = b_\pm (M)$ and 
$\tau (X) = \tau (M)$ therefore coincide with  familiar topological invariants of $M$.

If $(X,g)$ is  a compact oriented $4$-dimensional Riemannian orbifold,
we can then  use the decomposition \eqref{codeco} to define a  projection
\begin{eqnarray*}
H^2 (X, \RR) &\longrightarrow&  \mathcal{H}^+_g \subset H^2 (X, \RR)\\
\mathbf{a} &\longmapsto& \mathbf{a}^+
\end{eqnarray*}
and thus associate  a non-negative 
number $(\mathbf{a}^+)^2 = \langle \mathbf{a}^+\smile \mathbf{a}^+ , [X]\rangle$ with any cohomology class 
 $\mathbf{a}\in H^2_{DR} (X, \RR)$. Of course, this number ostensibly 
still depends on $g$  via the decomposition \eqref{codeco}.

\begin{lem} \label{compare} 
Let $(X,g)$ be a compact oriented $4$-dimensional Riemannian orbifold, and 
$\mathbf{a} \in H^2(X, \RR)$. If $\psi$ is any closed orbifold $2$-form such that 
$[\psi ] = \mathbf{a}$ in de Rham cohomology, then 
$$\int_X |\psi^+|^2 d\mu \geq (\mathbf{a}^+)^2 ~$$
with equality iff $\psi$ is harmonic.
\end{lem}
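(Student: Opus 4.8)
The plan is to imitate the standard Hodge-theoretic argument for comparing the self-dual part of an arbitrary closed form with the norm of the harmonic representative, using only the orbifold Hodge theory already established earlier in the paper. First I would invoke the orbifold Hodge theorem to write $\mathbf{a}$ uniquely as $[\varphi]$ where $\varphi \in \mathcal{H}^2(X,g)$ is harmonic, and then split it into its self-dual and anti-self-dual parts $\varphi = \varphi^+ + \varphi^-$, both of which are closed (being the harmonic projections onto $\mathcal{H}^\pm_g$, as in the decomposition \eqref{codeco}) and hence themselves harmonic. Since $\psi$ is closed with $[\psi] = \mathbf{a} = [\varphi]$, we have $\psi = \varphi + d\eta$ for some orbifold $1$-form $\eta$, by the surjectivity part of orbifold de Rham theory.

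Next I would compute $\psi^+ = \varphi^+ + (d\eta)^+$ and expand
$$\int_X |\psi^+|^2 d\mu = \int_X |\varphi^+|^2 d\mu + 2\int_X \langle \varphi^+, (d\eta)^+\rangle d\mu + \int_X |(d\eta)^+|^2 d\mu ~.$$
The cross term vanishes: since $\varphi^+$ is self-dual, $\langle \varphi^+, (d\eta)^+\rangle d\mu = \langle \varphi^+, d\eta\rangle d\mu = \varphi^+ \wedge \star d\eta = \varphi^+\wedge d\eta$, and because $\varphi^+$ is closed, $\varphi^+\wedge d\eta = d(\varphi^+\wedge \eta)$ is exact, so its integral over the closed oriented orbifold $X$ is zero by Stokes' theorem for orbifolds. (Alternatively, one notes $\varphi$ is $L^2$-orthogonal to all exact forms, hence so is $\varphi^+$ to $(d\eta)^+$ once one checks the $\pm$-projections are $L^2$-orthogonal to each other, which follows from the pointwise orthogonality of $\Lambda^+$ and $\Lambda^-$.) Dropping the manifestly non-negative last term gives
$$\int_X |\psi^+|^2 d\mu \geq \int_X |\varphi^+|^2 d\mu ~.$$

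Finally I would identify $\int_X |\varphi^+|^2 d\mu$ with $(\mathbf{a}^+)^2$. Since $\varphi^+$ is a self-dual harmonic form, it equals $|\varphi^+|^2 d\mu = \varphi^+\wedge\varphi^+$, and by the discussion preceding the lemma $[\varphi^+]$ is exactly the projection $\mathbf{a}^+ \in \mathcal{H}^+_g$ of $\mathbf{a}$ under \eqref{codeco}; thus $\int_X |\varphi^+|^2 d\mu = \int_X \varphi^+\wedge\varphi^+ = \langle \mathbf{a}^+\smile\mathbf{a}^+,[X]\rangle = (\mathbf{a}^+)^2$, using that the de Rham cup product is the wedge product. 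This yields the desired inequality. For the equality case: equality forces $\int_X |(d\eta)^+|^2 d\mu = 0$, i.e. $(d\eta)^+ \equiv 0$, so $\psi = \varphi + d\eta$ with $d\eta$ anti-self-dual; but then $\psi^- = \varphi^- + d\eta$ while $\psi^+ = \varphi^+$ is already harmonic, and one checks that $d\psi = 0$ together with $(d\eta)^+=0$ forces $d\star\psi = d\star(d\eta)^- = -d(d\eta) = 0$ — wait, more carefully: $d\eta = (d\eta)^-$ is anti-self-dual and closed, hence coclosed, hence harmonic and exact, hence zero by the injectivity of the Hodge map; therefore $\psi = \varphi$ is harmonic. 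Conversely if $\psi$ is harmonic then $\psi = \varphi$ by uniqueness and equality is obvious.

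The only mild subtlety — not really an obstacle, since the requisite orbifold Hodge theory, Stokes' theorem, and the conformally invariant $\pm$-decomposition are all recorded above — is making sure that each analytic step (existence of the harmonic representative, solvability of $\psi - \varphi = d\eta$ with an orbifold-smooth $\eta$, validity of the $L^2$ pairing and Stokes on the orbifold) is quoted from the already-established orbifold machinery rather than re-derived; given the preceding subsections this is purely bookkeeping.
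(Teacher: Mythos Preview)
Your proof is correct, but it follows a different route from the paper's. The paper's argument is a two-line trick: since the harmonic representative $\varphi$ minimizes the full $L^2$-norm in its cohomology class, one has $\int_X(|\psi^+|^2+|\psi^-|^2)\,d\mu \geq \int_X(|\varphi^+|^2+|\varphi^-|^2)\,d\mu$ with equality iff $\psi=\varphi$; on the other hand, $\int_X(|\psi^+|^2-|\psi^-|^2)\,d\mu = \int_X(|\varphi^+|^2-|\varphi^-|^2)\,d\mu$ since both compute the topological self-intersection $\mathbf{a}^2$; averaging these two gives the result and the equality case in one stroke. Your approach instead writes $\psi=\varphi+d\eta$, expands $|\psi^+|^2$, kills the cross term via Stokes, and then handles the equality case by observing that an exact anti-self-dual form is harmonic and hence zero. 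Your method is the more hands-on computation and makes the mechanism (orthogonality of $\varphi^+$ to exact forms) explicit; the paper's is slicker and bypasses any need to discuss $d\eta$ separately, getting the sharp equality characterization directly from the uniqueness of the $L^2$-minimizer. Both are standard and equally valid; the paper's version has the minor advantage that the equality case falls out automatically rather than requiring a separate harmonic-and-exact-implies-zero step.
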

\begin{proof} Let $\varphi$ be the unique harmonic $2$-form with  $[\varphi ] = \mathbf{a}$,
so that 
$$(\mathbf{a}^+)^2 = \int_X |\varphi^+|^2 d\mu.$$
Since $\varphi$ is the unique minimizer of the  $L^2$-norm in its cohomology class, 
$$\int_X (|\psi^+|^2+|\psi^-|^2) d\mu \geq \int_X (|\varphi^+|^2+|\varphi^-|^2)d\mu$$
with equality iff $\psi=\varphi$.
  On the other hand, 
$$\int_X (|\psi^+|^2-|\psi^-|^2) d\mu = \int_X (|\varphi^+|^2-|\varphi^-|^2)d\mu$$
because both sides compute the iself-ntersection number $[\psi ]^2 = \mathbf{a}^2 = [\varphi ]^2$. 
 Averaging these two formulas  then yields the desired inequality.
\end{proof}

The harmonic $2$-forms on a compact Riemannian orbifold are exactly those orbifold $2$-forms which are killed by 
the Hodge Laplacian
$$(d+d^*)^2 = - \star d \star d - d \star d \star .$$
If $\psi$ is a self-dual $2$-form, then  $\Delta_d\psi$ is also self-dual, and 
can, moreover, 
 be re-expressed by means of  the  Weitzenb\"ock formula \cite{bourg}  
  \begin{equation}
\label{friend}
\Delta_d \psi =     \nabla^{*}\nabla \psi - 2W_{+}(\psi , 
\cdot ) + \frac{s}{3} \psi  ~. 
\end{equation}
This formula leads to various interesting interplays between curvature and topology, and even 
supplies  interesting information about self-dual $2$-forms which are not assumed to 
satisfy any equation at all. Indeed, taking the 
 $L^2$ inner product of \eqref{friend} with $\psi$ tells us 
$$
  \int_{M} \left(|\nabla \psi |^{2}
-2W_{+}(\psi , \psi ) + \frac{s}{3}|\psi |^{2}\right) d\mu\geq 0 ,  
$$
since $\Delta_d$ is a  non-negative operator. 
%with equality iff $\psi$ is closed. 
On the other hand, since $W_+: \Lambda^+\to \Lambda^+$ is self-adjoint and trace-free, 
$$|W_+(\psi , \psi ) | \leq \sqrt{\frac{2}{3}}|W_+| |\psi |^2,$$
so it follows that any self-dual $2$-form $\psi$ satisfies  
\begin{equation}
\label{part} 
 \int_{M} |\nabla \psi |^{2}d\mu \geq 
\int_M\left(-2\sqrt{\frac{2}{3}}|W_+|-\frac{s}{3}\right)|\psi |^{2} d\mu .
\end{equation}
Moreover,  
 assuming that $\psi\not\equiv  0$, 
equality holds iff $\psi$ is closed, belongs the lowest eigenspace of 
$W_+$ at each point, and the two largest eigenvalues of $W_+$ are
everywhere  equal. Of course, this last assertion crucially depends on  
the fact \cite{arons,baer} that if $\Delta_d \psi =0$ and $\psi \not\equiv 0$, then $\psi\neq 0$
on a dense subset of $M$.

\subsection{Almost-Complex Structures}
\label{acs}

An {\em almost-complex structure} $J$ on an orbifold $X$ is by definition a section of the $V$-bundle $\End (TX) = T^*X \otimes TX$  such that 
$J^2 = - \mathbf{1}$. An almost-complex structure is said to be {\em integrable}, or to be a {\em complex structure}, it $X$ can be covered 
by local uniformizing charts in which it becomes the standard (constant coefficient) structure  on $\CC^m$ for some $m$. The latter happens
iff there is some  torsion-free orbifold connection $\nabla$ on $TX$ such that $\nabla J=0$. 

Suppose that $M$ is a smooth compact $4$-manifold which is equipped with an almost-complex structure $J_0$, and let 
$\Sigma\subset M$ be  a smooth compact embedded surface. Suppose  that $\Sigma$  is a pseudo-holomorphic curve with respect to $J_0$,
by which we mean that  $J_0(T\Sigma) =  T\Sigma$ at every point of $\Sigma$. Then $TX|_\Sigma$ can be made into a complex vector bundle of 
rank $2$ by equipping it with $J_0$, and this can then be split as a sum $T\Sigma \oplus N$ of complex line bundles. Now choose
any $U(1)$ connection on $N$, and use it to endow $N$ with the structure of a holomorphic line bundle by endowing it with a $\bar{\partial}$
operator. This makes $N$ into a complex manifold. Now use the tubular neighborhood theorem to endow a neighborhood $U\supset \Sigma$
with an integrable  complex
structure $J_2$ which exactly agrees with $J_0$ along $\Sigma$. In particular, the $(-i)$-eigenspace $T_2^{0,1}$ of $J_2$ is in general position
to the $(+i)$-eigenspace  $T_1^{1,0}$ of $J_0$ at $\Sigma$; hence this also holds in a neighborhood $U^\prime$  of $\Sigma$. It follows that,
on  $U^\prime$ , 
 $T_2^{0,1}$ may be expressed as
the graph of a unique section of $\phi$ of $\Lambda_0^{0,1}\otimes T^{1,0}_0= \Hom (T^{0,1}_0, T^{1,0}_0)$, where $\phi$ vanishes identically along $\Sigma$. 
Moreover, by shrinking $U^\prime$ if necessary, we may also assume that $\tr [\bar{\phi}\circ \phi] < 1/2$ everywhere. 
Now let  $f: M\to [0,1]$ be a cut-off function  which is identically $1$ on a neighborhood
of $\Sigma$ and which is  supported on a    compact subset of $U^\prime$. Then the graph of $f\phi$ is then  in general position to its conjugate, and so  is the $(-i)$-eigenspace $T_1^{0,1}$ of a unique 
 almost-complex structure $J_1$ on
$M$ which coincides with $J_0$  outside of $U^\prime$,  and coincides with $J_2$ in a neighborhood of $\Sigma$. 
Moreover, the family of tensor fields $tf\phi$, $t\in [0,1]$,  gives rise to a homotopy of almost-complex structure $J_t$ which 
interpolates between the given almost-complex structure 
 $J_0$ and the 
 constructed almost-complex structure $J_1$. All of these almost-complex structures $J_t$ exactly coincide along $\Sigma$
and outside a tubular neighborhood of $\Sigma$. 
Insofar as homotopy classes of almost-complex structures are actually the important objects for our purposes, 
the point  is  that, by merely replacing $J_0$ with a homotopic almost-complex structure $J_1$ if necessary,
 we may always assume the given almost-complex structure $J_0$ is integrable in a neighborhood of $\Sigma$. In the same way, we may also 
 assume, if necessary, that it is integrable in a neighborhood of any given finite collection of points of $M-\Sigma$.

Choose some integer $p\geq 2$, set $\beta = 1/p$, and 
consider the orbifold $(M,\Sigma, \beta)$ obtained by declaring that the total angle around 
$\Sigma$ to be $2\pi$. Of course, this is only defined modulo diffeomorphisms of $M$, so we are
free to choose the obifold charts for $(M,\Sigma, \beta)$ to be adapted to the integrable complex
structure we have just chosen on a neighborhood of $\Sigma$. Our objective now is to 
construct a homotopy class of almost-complex structures $J$ on $(M,\Sigma, \beta)$ that 
is determined by the homotopy class of almost-complex structures determined by $J_0$, 
where the homotopies on both $M$ and $(M,\Sigma, \beta)$ are both  subject to the  constraint
that $\Sigma$ is to remain a pseudo-holomorphic curve for all values of the time parameter $t$. 

We will describe two different useful ways of understanding the construction. The first of these, which 
we will call the {\em holomorphic model}, is especially useful when $J_0$ is a complex structure
on $M$, but can be carried out even when $J_0$ is merely integrable in a neighborhood of $\Sigma$. 
If  $p\in \Sigma$ is any point, let  $(w,z)\in \CC^2$ be local holomorphic coordinates on $(M,J_0)$ 
such that $z=0$ is a local defining function for 
$\Sigma$. We then introduce local uniformizing complex coordinates on $(M,\Sigma, \beta)$
by declaring that $z=\zeta^p$. This convention is often used in complex geometry \cite{cds0,rotho3}, 
because, when $(M,J_0)$ is a complex manifold,   the sheaf of holomorphic orbifold functions on  $(M,\Sigma, \beta)$ 
then coincides with the sheaf  $\mathcal{O}$ of holomorphic functions on $(M,J_0)$. We may then  equip  $(M,\Sigma, \beta)$ 
with the unique complex structure $J$ which agrees with $J_0$ on $M-\Sigma$, and coincides with the usual 
integrable complex structure tensor on $\CC^2$ in the local uniformizing charts we have just introduced. 
However,  one caveat must be borne in mind: this convention is not consistent with  
standard conventions regarding the definition of orbifolds! However, this  is actually not a serious problem.
We can  hew to the standard definition by instead using  what we'll call the  {\em oragami model}
of $(M, \Sigma , \beta )$, where we  instead 
introduce uniformizing complex coordinates $(w,\tilde{\zeta})$ such that  $z= \tilde{\zeta}^p/|\tilde{\zeta|}^{p-1}$.
Then, if we equip $M$ with a self-homeomorphism that  simply rescales the radius function $\rho$ within the 
tubular neighborhood, equals the identity outside the tubular neighborhood, is smooth on $M-\Sigma$, and 
behaves like 
$\rho \mapsto \mbox{const }\rho^{1/p}$ for small $\rho$, we then induce  a diffeomorphism between the two different models
for $(M,\Sigma, \beta)$. Notice that this self-homeomorphism of $M$ is moreover 
homotopic to the identity. 

%It is also useful to introduce a second model, which we will call the {\em topological model} of the construction. 

\begin{lem} \label{plf}
Let $M$ be equipped with an almost-complex structure $J_0$ for which $\Sigma$ is a pseudo-holomorphic curve, and 
equip $(M, \Sigma , \beta )$ with the associated homotopy class of almost-complex structures $J$. 
Then the Chern classes
 of these two spaces are related by 
$$c_1^{\rm orb}(M, \Sigma , \beta) = c_1 (M) + (\beta -1) [\Sigma ]$$
where $\beta = 1/p$. 
\end{lem}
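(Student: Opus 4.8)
The plan is to compute $c_1^{\rm orb}(M,\Sigma,\beta)$ by working in the holomorphic model, where a neighborhood of $\Sigma$ carries genuine complex coordinates $(w,\zeta)$ with $z=\zeta^p$, and then comparing the orbifold anticanonical $V$-line bundle with the ordinary one on $M$. The key observation is that $c_1^{\rm orb}$ is represented, via Chern--Weil theory as developed above, by $\frac{i}{2\pi}$ times the curvature of any connection on $K^{-1}_{\rm orb}= \Lambda^2 T^{1,0}(M,\Sigma,\beta)$, so it suffices to exhibit a convenient connection and compute how its curvature differs from that of a connection on $K^{-1}_M=\Lambda^2 T^{1,0}M$. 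Since $J$ agrees with $J_0$ on $M-\Sigma$, the two $V$-line bundles are canonically identified over the regular set, and the whole question is local near $\Sigma$: how does the transition from the coordinate $z$ to the uniformizing coordinate $\zeta$ twist the top exterior power of the holomorphic tangent bundle?

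First I would fix a holomorphic frame. On the overlap, $dw, dz$ trivializes $K_M$ near $\Sigma$, while $dw, d\zeta$ trivializes $K_{\rm orb}$ in the uniformizing chart; from $z=\zeta^p$ we get $dz = p\,\zeta^{p-1} d\zeta$, so the holomorphic frame $dw\wedge dz$ of $K_M$ equals $p\,\zeta^{p-1}\, dw\wedge d\zeta$ as a section of $K_{\rm orb}$. Equivalently $K_M = K_{\rm orb}\otimes \mathcal{O}((p-1)\tilde\Sigma)$ where $\tilde\Sigma=\{\zeta=0\}$ is the singular divisor, which downstairs is $[\Sigma]$ but upstairs carries fractional self-intersection because $\{\zeta=0\}$ covers $\{z=0\}$ $p$-to-one ramified. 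Dualizing, $c_1^{\rm orb}(M,\Sigma,\beta) = c_1(M) - (p-1)\,c_1^{\rm orb}(\mathcal{O}(\Sigma_{\rm orb}))$, and the content of the lemma is the identity $c_1^{\rm orb}(\mathcal{O}(\Sigma_{\rm orb})) = \frac{1}{p}[\Sigma] = \beta[\Sigma]$ in $H^2(M,\QQ)$ — i.e. the $V$-line bundle associated to the singular divisor has orbifold Chern class equal to $\beta$ times the ordinary Poincaré dual. Granting that, $c_1^{\rm orb} = c_1(M) - (p-1)\beta[\Sigma] = c_1(M) + (\beta-1)[\Sigma]$ once we note $(p-1)\beta = 1-\beta$, which is exactly the claimed formula.

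To nail the identity $c_1^{\rm orb}(\mathcal{O}(\Sigma_{\rm orb}))=\beta[\Sigma]$, the cleanest route is Chern--Weil: the $V$-line bundle $\mathcal{O}(\Sigma_{\rm orb})$ has a section vanishing to order $1$ along $\tilde\Sigma=\{\zeta=0\}$ in the uniformizing chart, and one builds a connection whose curvature is a smooth orbifold $2$-form supported near $\Sigma$; pushing forward / integrating transversally, the periods one gets are $\frac1p$ of the periods of a representative of $[\Sigma]$, because a transverse disk $\{w=\text{const}\}$ downstairs is covered by the uniformizing disk with the $\ZZ_p$-action, so the orbifold integral picks up the factor $1/|\Gamma|=\beta$. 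Alternatively, and perhaps more transparently, one can invoke the defining relation $c_1^{\rm orb}(L)=\frac1q c_1(L^{\otimes q})$: take $q=p$, observe $\mathcal{O}(\Sigma_{\rm orb})^{\otimes p}$ is the honest line bundle $\mathcal{O}(\Sigma)$ on $M$ (since $\zeta^p = z$ is a genuine global defining-type function locally, the $p$-th power is locally trivial and glues to the usual $\mathcal{O}(\Sigma)$), hence $c_1^{\rm orb}(\mathcal{O}(\Sigma_{\rm orb}))=\frac1p c_1(\mathcal{O}(\Sigma))=\frac1p[\Sigma]$.

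The one point requiring genuine care — the main obstacle — is matching the \emph{almost-complex / homotopy-theoretic} side with the \emph{holomorphic} model side: the lemma asserts something about the homotopy class of $J$ on the orbifold, not just about a particular integrable structure, so I must use the construction of \S\ref{acs}, which reduces the general almost-complex case to one where $J_0$ is integrable near $\Sigma$ (and near the blow-up points, though blow-ups are irrelevant here since $\ell$ does not enter) via an explicit homotopy through structures keeping $\Sigma$ pseudo-holomorphic. Since $c_1^{\rm orb}$ of the tangent $V$-bundle depends only on the homotopy class of $J$ — because homotopic $J$'s give isomorphic complex $V$-bundles $(TM,J)$, hence equal $c_1^{\rm orb}$ — and since away from $\Sigma$ nothing changes, the computation is legitimately reduced to the holomorphic-model calculation above. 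I would also remark that the passage between the holomorphic model ($z=\zeta^p$) and the origami model ($z=\tilde\zeta^p/|\tilde\zeta|^{p-1}$) is via a self-homeomorphism of $M$ homotopic to the identity, as noted in \S\ref{acs}, so it does not affect any cohomology class; thus the formula computed in the holomorphic model is the formula for the orbifold $(M,\Sigma,\beta)$ as standardly defined.
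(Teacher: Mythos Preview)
Your approach is essentially identical to the paper's: work in the holomorphic model, compare the canonical bundles via $dz = p\,\zeta^{p-1}\,d\zeta$, and compute $c_1^{\rm orb}$ of the orbifold divisor bundle by taking its $p$-th power to get the honest line bundle $\mathcal{O}(\Sigma)$ on $M$. The paper phrases this with the ideal-sheaf $V$-line bundle $\hat{L}$ (your $\mathcal{O}(\Sigma_{\rm orb})^{-1}$), obtaining $K = K^{\rm orb}\otimes\hat{L}^{p-1}$ and $c_1^{\rm orb}(\hat{L}) = -\tfrac{1}{p}[\Sigma]$, but the content is the same.

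One sign slip to correct: from $dw\wedge dz = p\,\zeta^{p-1}\,dw\wedge d\zeta$ you see that a local frame of $K_M$, viewed in $K_{\rm orb}$, \emph{vanishes} to order $p-1$ along $\tilde\Sigma$, so the correct relation is $K_M \cong K_{\rm orb}\otimes\mathcal{O}\bigl(-(p-1)\tilde\Sigma\bigr)$, i.e.\ $K_{\rm orb} \cong K_M\otimes\mathcal{O}\bigl((p-1)\tilde\Sigma\bigr)$ --- the usual Riemann--Hurwitz ramification formula --- not $K_M = K_{\rm orb}\otimes\mathcal{O}((p-1)\tilde\Sigma)$ as you wrote. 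Your subsequent ``dualizing'' step then introduces a second sign error that cancels the first, so your final formula $c_1^{\rm orb} = c_1(M) - (p-1)\beta[\Sigma] = c_1(M)+(\beta-1)[\Sigma]$ comes out right; but the intermediate line should be fixed.
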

\begin{proof}
Let $\mathcal{I}\subset \mathcal{E}_{M,\CC}$ be the 
ideal sheaf on $M$ consisting of smooth complex-valued functions on $M$ which vanish along $\Sigma$ and whose
first derivatives at $\Sigma$ are $J_0$-linear; this is a rank-$1$ free $\mathcal{E}_{M,\CC}$-module, and in fact
is exactly the sheaf of sections of the smooth line bundle $L\to M$ with $c_1(L) = -[\Sigma ]$. Similarly, let 
$\hat{\mathcal{I}}$ be the sheaf on $(X,\Sigma, \beta)$ whose sections on $V_\mathfrak{J}=U_\mathfrak{J}/\Gamma_\mathfrak{J}$ are 
 smooth complex-valued functions $f$ on $U_\mathfrak{J}$ 
which vanish along $\Sigma$,  have $J$-linear first derivatives there, and, when $\Gamma_\mathfrak{J}= \ZZ_p$,  transform under the  action of $e^{2\pi/p}\in \ZZ_p$ by 
$f\mapsto  e^{2\pi/p}f$; this is  a locally free rank-$1$ sheaf of 
$ \mathcal{E}_{(M,\Sigma, \beta),\CC}$-modules, locally generated by the complex coordinate $\zeta$,   and so is the sheaf of orbifold-smooth  sections 
of a $V$-line bundle $\hat{L}$ over $(M, \Sigma, \beta )$. Using the holomorphic model, we have a pull-back morphism $\mathcal{I}\to 
\hat{\mathcal{I}}^{\otimes p}$ induced by $(w,z) = (w, \zeta^p)$, which gives rise to an isomorphism  
$L\cong \hat{L}^{\otimes p}$. Hence 
$$\corb (\hat{L}) =  \frac{1}{p} c_1 (L) = - \frac{1}{p} [\Sigma ]$$
in rational cohomology. On the other hand, since $dz\wedge dw$ pulls back to become $p \zeta^{p-1}d\zeta\wedge dw$,
we have an induced isomorphism 
$$K = K^{orb} \otimes \hat{L}^{p-1}$$
where $K= \Lambda^{2,0}_{J_0}$ and $K^{\rm orb}= \Lambda^{2,0}_J$. Thus
\begin{eqnarray*}
c_1 (K) &=& \corb (K^{orb} ) + (p-1) \corb ( \hat{L} ) 
\\&=& \corb (K^{orb} ) + (1-\frac{1}{p}) c_1 (L ) \\
\\&=& \corb (K^{orb} ) - (1-\beta ) [\Sigma]
\end{eqnarray*}
in rational cohomology, and since 
$$
\corb (M,\Sigma, \beta) = - \corb (K^{orb} ) , \qquad  c_1 (M)  = - c_1 (K) , 
$$
we therefore have 
$$\corb (M,\Sigma, \beta) = c_1 (M)  + (\beta -1) [\Sigma ],$$
thus proving the claim.
\end{proof}

We will also need to explicitly understand the almost-complex structure $J$ in the origami model of $(M,\Sigma, \beta )$. Let us take transverse
a polar coordinate system  $({\rho}, \theta, x^1,x^2)$ about some point  of $\Sigma$. If $d\theta + \alpha$ is the imaginary part
of a $U(1)$ connection on the normal bundle $N$ of $\Sigma\subset M$, then we may take 
 $J_0$  to be integrable near $\Sigma$, and given by 
 \begin{eqnarray*}
J_0 =  \frac{\partial}{\partial \theta} \otimes  \frac{d\rho}{\rho}-  \rho\frac{\partial}{\partial \rho} \otimes d\theta &+& 
\left(
\frac{\partial}{\partial x^2} -\alpha_2\frac{\partial}{\partial \theta} -{\alpha_1}{\rho}\frac{\partial}{\partial \rho} 
\right) \otimes dx^1\\ &-&\left(
\frac{\partial}{\partial x^1} -\alpha_1\frac{\partial}{\partial \theta} +{\alpha_2}{\rho}\frac{\partial}{\partial \rho} 
\right) \otimes dx^2
\end{eqnarray*}
in our transverse polar coordinates. Passing to orbifold coordinates near $\Sigma$ just involves replacing the 
polar angle $\theta$ with a new polar angle $\tilde{\theta}= \theta/p$, so that the pull-back of the above becomes 
 \begin{eqnarray*}
J_0 =  \frac{\partial}{\partial \tilde{\theta}} \otimes  p \frac{d\rho}{\rho} -   \frac{1}{p}\rho\frac{\partial}{\partial \rho} \otimes d\tilde{\theta} &+& 
\left(
\frac{\partial}{\partial x^2} -p\alpha_2\frac{\partial}{\partial \tilde{\theta}} -{\alpha_1}{\rho}\frac{\partial}{\partial \rho} 
\right) \otimes dx^1\\ &-&\left(
\frac{\partial}{\partial x^1} -p\alpha_1\frac{\partial}{\partial \tilde{\theta}} +{\alpha_2}{\rho}\frac{\partial}{\partial \rho} 
\right) \otimes dx^2
\end{eqnarray*}
By contrast, the branched-cover complex structure is given by 
 \begin{eqnarray*}
\frac{\partial}{\partial \tilde{\theta}} \otimes  \frac{d\rho}{\rho}-  \rho\frac{\partial}{\partial \rho} \otimes d\tilde{\theta} &+& 
\left(
\frac{\partial}{\partial x^2} -p\alpha_2\frac{\partial}{\partial \tilde{\theta}} -p{\alpha_1}{\rho}\frac{\partial}{\partial \rho} 
\right) \otimes dx^1\\ &-&\left(
\frac{\partial}{\partial x^1} -p\alpha_1\frac{\partial}{\partial \tilde{\theta}} +p{\alpha_2}{\rho}\frac{\partial}{\partial \rho} 
\right) \otimes dx^2
\end{eqnarray*}
and one can interpolate between these two by taking our orbifold complex structure to be the integrable complex structure
\begin{eqnarray*}
J =  \frac{\partial}{\partial \tilde{\theta}} \otimes  f(\rho ) \frac{d\rho}{\rho} -   \frac{1}{f(\rho) }\rho\frac{\partial}{\partial \rho} \otimes d\tilde{\theta} &+& 
\left(
\frac{\partial}{\partial x^2} -p\alpha_2\frac{\partial}{\partial \tilde{\theta}} -\frac{p}{f(\rho) }{\alpha_1}{\rho}\frac{\partial}{\partial \rho} 
\right) \otimes dx^1\\ &-&\left(
\frac{\partial}{\partial x^1} -p\alpha_1\frac{\partial}{\partial \tilde{\theta}} +\frac{p}{f(\rho) }{\alpha_2}{\rho}\frac{\partial}{\partial \rho} 
\right) \otimes dx^2
\end{eqnarray*}
where $f(\rho)$ is a smooth  positive function with $f\equiv 1$ for, say,  $\rho < \epsilon$ and $f\equiv p$ for, say, $\rho > 10 \epsilon$.  
In fact, this $J$ is simply the pull-back of $J_0$ via a suitable homeomorphism which is smooth away from $\Sigma$; indeed, if we 
set ${\zap r}= \exp \frac{1}{p} \int f(\rho) d\rho /\rho$, then $J$ becomes $J_0$, with $\rho$ replaced by ${\zap r}$. Since, for an appropriate 
choice of constant of integration,  
${\zap r} = \mbox{const }\rho^{1/p}$ for small $\rho$ and ${\zap r} = \rho$ for large $\rho$, we see that this harmonizes 
the holomorphic and oragami pictures in exactly the manner previously promised.

\subsection{Symplectic Structures}

\begin{prop} Let $(M, \omega_0)$ be a symplectic manifold, and suppose that  $\Sigma \subset M$ 
is an embedded surface to which $\omega_0$ restricts as an area form. Choose any  integer $p\geq 2$, 
and set $\beta =1/p$. Then the orbifold $(M,\Sigma, \beta)$ also admits a symplectic form $\omega$
admits a symplectic form $\omega$ with $[\omega]=[\omega_0]$ in $H^2(M,\RR)$. 
\end{prop}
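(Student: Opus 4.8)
The plan is to reduce the statement to a local assertion near $\Sigma$, which is then settled by a direct computation in orbifold uniformizing coordinates. Since $\omega_0$ restricts to $\Sigma$ as an area form, $\Sigma$ is a symplectic submanifold of $(M,\omega_0)$, and its normal bundle $\varpi:N\to\Sigma$ inherits a fiberwise symplectic structure, hence the structure of a complex line bundle. Fix a Hermitian metric and compatible connection on $N$, write $d\theta+\alpha$ for the connection $1$-form on the unit circle bundle (so $\alpha$ is pulled back from $\Sigma$), and on a small disk bundle $N'\subset N$ put
$$\omega_N=\varpi^*(\omega_0|_\Sigma)+\half\,d\!\left(\rho^2(d\theta+\alpha)\right)=\varpi^*(\omega_0|_\Sigma)+\rho\,d\rho\wedge(d\theta+\alpha)+\half\rho^2\,d\alpha ,$$
which, for $N'$ small enough, is a symplectic form modeling the symplectic normal bundle of $\Sigma$. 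By Weinstein's symplectic neighborhood theorem there is then a diffeomorphism $\Phi$ from $N'$ onto a tubular neighborhood $U$ of $\Sigma$ in $M$ with $\Phi|_\Sigma=\mathrm{id}$ and $\Phi^*\omega_0=\omega_N$.

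As in \S\ref{acs}, I would then exploit the fact that $(M,\Sigma,\beta)$ is defined only modulo self-diffeomorphisms of $(M,\Sigma)$ and choose the orbifold uniformizing charts near $\Sigma$ to be the origami-model charts adapted to the normal coordinates supplied by $\Phi^{-1}$; thus the fiber coordinate $z$ on $N$ is given in terms of the uniformizing coordinate $\tilde\zeta=\tilde x+i\tilde y$ by $z=\tilde\zeta^{\,p}/|\tilde\zeta|^{p-1}$, so that $\rho=|\tilde\zeta|$ and $\theta=p\tilde\theta$. Substituting $\rho\,d\rho\wedge d\theta=p\,d\tilde x\wedge d\tilde y$, $\rho\,d\rho=\tilde x\,d\tilde x+\tilde y\,d\tilde y$, and $\rho^2=\tilde x^2+\tilde y^2$ turns the displayed formula into
$$\omega_N=\varpi^*(\omega_0|_\Sigma)+p\,d\tilde x\wedge d\tilde y+(\tilde x\,d\tilde x+\tilde y\,d\tilde y)\wedge\alpha+\half(\tilde x^2+\tilde y^2)\,d\alpha ,$$
which is manifestly smooth and $\ZZ_p$-invariant, and which along $\Sigma$ (where $\tilde\zeta=0$) reduces to $\varpi^*(\omega_0|_\Sigma)+p\,d\tilde x\wedge d\tilde y$, so is nondegenerate there. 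Hence $\omega_N$ is an orbifold symplectic form on the orbifold disk bundle, and consequently $\omega:=\omega_0$ --- which is smooth and symplectic on the regular set $M-\Sigma$, and which coincides in these charts near $\Sigma$ with the orbifold-smooth nondegenerate form just exhibited --- is an orbifold symplectic form on $(M,\Sigma,\beta)$. Since $\omega$ equals $\omega_0$ as a form on $M$, the identity $[\omega]=[\omega_0]$ in $H^2(M,\RR)$ is automatic.

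The only real work is the bookkeeping of the first two paragraphs: one must check that the $\Phi$-adapted charts genuinely present the orbifold $(M,\Sigma,\beta)$ up to the equivalences under which it is defined --- so that producing an orbifold symplectic form for these charts produces one for $(M,\Sigma,\beta)$ itself --- and that the cohomological normalization survives, which it does because the relevant self-homeomorphisms of $M$, such as the radial one relating the holomorphic and origami models in \S\ref{acs}, are homotopic to the identity and hence act trivially on $H^2(M,\RR)$. The computation itself is routine, but it is worth emphasizing that it genuinely requires the origami substitution $z=\tilde\zeta^{\,p}/|\tilde\zeta|^{p-1}$, which leaves the radial variable unchanged, rather than the holomorphic substitution $z=\zeta^{\,p}$.
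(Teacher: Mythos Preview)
Your proof is correct and follows essentially the same approach as the paper: both invoke the Weinstein neighborhood theorem to put $\omega_0$ in the standard form $\varpi^*\omega_\Sigma + d\bigl(\tfrac{\rho^2}{2}(d\theta+\alpha)\bigr)$ near $\Sigma$, then substitute $\theta = p\tilde\theta$ and observe that the resulting expression is orbifold-smooth and nondegenerate in the origami uniformizing chart. The paper additionally rescales the radius via $\mathfrak{r}=\rho\sqrt{p}$ to make the formula look identical to the original (with $\alpha$ replaced by $p\alpha$), whereas you keep $\rho=|\tilde\zeta|$ and pass to Cartesian coordinates $(\tilde x,\tilde y)$ to exhibit smoothness explicitly; this is a cosmetic difference, and your Cartesian verification is arguably more transparent about regularity at $\tilde\zeta=0$.
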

\begin{proof}
By the Weinstein tubular neighborhood theorem \cite{weinsteinlec}, 
a tubular neighborhood of $\Sigma$ is determined up to symplectomorphism by 
the induced area form and the symplectic normal bundle of $\Sigma$. 
Thus, choosing a complex line bundle $E\to \Sigma$ of degree $[\Sigma]^2$, 
the symplectic form is expressible in transverse polar coordinates  $({\rho}, \theta, x^1,x^2)$ about some point  of $\Sigma$ as
\begin{eqnarray*}
\omega &= &d\left( \frac{{\rho}^2}{2}  (d\theta + \alpha) \right)+ \varpi^* \omega_\Sigma \\&=&   d{\rho}\wedge {\rho} (d\theta + \alpha) 
+ \frac{1}{2} {\rho}^2\varpi^*\Omega   + \varpi^* \omega_\Sigma
\end{eqnarray*}
where $\omega_\Sigma$ is the induced area form on $\Sigma$, $\varpi : E\to \Sigma$
is the bundle projection, and 
$d\theta + \alpha$ is the imaginary part of a  $U(1)$ connection form on $E$, with curvature $i\Omega= id\alpha$, 
expressed for concreteness in terms of a local connection form $\alpha$ on $\Sigma$. 
If we now set $\tilde{\theta} = \beta  \theta$ and  ${\zap r} = {\rho}/\sqrt{\beta}$, where $\beta = 1/p$, we then have 
\begin{eqnarray*}
\omega &= &d\left( \frac{{\zap r}^2}{2}  (d\tilde{\theta} + p\alpha) \right)+ \varpi^* \omega_\Sigma \\&=&   d{\zap r} \wedge {\zap r} (d\tilde{\theta} +p \alpha) 
+ \frac{1}{2} {\zap r}^2\varpi^*(p\Omega )   + \varpi^* \omega_\Sigma
\end{eqnarray*}
which may be viewed, in the origami model,  as an orbifold symplectic structure on the tubular neighborhood.
\end{proof}

The key point  is that there is no {\em symplectic} difference between a $2$-dimensional cone and a $2$-dimensional disk. The fact that
they are {\em metrically} different reflects different choices of almost-complex structure. Now notice that $\omega$ is 
invariant under the action of the almost-complex structure $J$ explicitly described in the oragami model at the end 
of \S \ref{acs}, and that $\omega (\_ , J\_ )$ is moreover positive-definite. In the symplectic case, this gives a self-contained 
characterization of the homotopy class of $J$ on $(M, \Sigma, \beta )$.

\subsection{The Todd Genus}

If  $(M,J_0)$ is a complex surface, and if $\Sigma \subset M$ is a holomorphic curve, then, for any integer 
$p\geq 2$,
the so-called holomorphic model of $(M, \Sigma, \beta)$, $\beta = 1/p$,  
has some remarkable advantages. In particular, the structure sheaf 
${\mathcal O}_{(M, \Sigma , p)}$ is actually just equal to the usual structure sheaf ${\mathcal O}_M$
of holomorphic functions on the complex manifold $M$. Indeed, a holomorphic
function $f (w, \zeta)$ is invariant under the action of $(w, \zeta)  \mapsto (w , e^{2\pi i /p}\zeta )$
iff it can be expressed as $f(w, z)$, where $z= \zeta^p$. The interesting point is 
that the orbifold point of view then leads to a non-standard acyclic resolution 
$$
0\to \mathcal{O} \to \mathcal{E}^{0,0}_{(M, \Sigma, \beta )} \stackrel{\bar{\partial}}{\rightarrow} \mathcal{E}^{0,1}_{(M, \Sigma, \beta )} \stackrel{\bar{\partial}}{\rightarrow} \mathcal{E}^{0,2}_{(M, \Sigma, \beta )} \to 0
$$
of the structure sheaf, and so the abstract de Rham theorem tells us that 
$$
H^{0,k} (M, \Sigma, \beta) = H^k (M, \mathcal{O}) = H^{0,k}(M)
$$
for every integer $k$. In particular, the index of the  orbifold elliptic operator 
$$
\bar{\partial} + \bar{\partial}^*: \mathcal{E} ((M, \Sigma, \beta ), \Lambda^{0,0} \oplus \Lambda^{0,2}) \to \mathcal{E} ((M, \Sigma, \beta ), \Lambda^{0,1}) 
$$
is exactly the Todd genus of the original manifold $M$: 
$$\chi ((M, \Sigma, p), {\mathcal O}) = \chi (M, {\mathcal O}) = \frac{(\chi + \tau )(M)}{4} .$$

We can put this in a broader context by considering considering thinking of the $V$-bundles
\begin{eqnarray*}
\mathbb{V}_+&=& \Lambda^{0,0} \oplus \Lambda^{0,2}\\
\mathbb{V}_-&=& \Lambda^{0,1} 
\end{eqnarray*}
as actually being twisted versions 
$$
\mathbb{V}_\pm = \mathbb{S}_\pm \otimes L^{1/2}
$$
of the spin bundles for some orbifold metric $g$ adapted to $J$;
here $L=K^{-1}$ is the anti-canonical $V$-bundle. If  $g$  K\"ahler, then $\sqrt{2}(\bar{\partial} + \bar{\partial}^*)$
is then just \cite{hitharm} the spin$^c$ Dirac operator  ${\slashed{D}}$ associated to an appropriate connection on $L$.
However, even for more general metrics, the ${\slashed{D}}$ and $\sqrt{2}(\bar{\partial} + \bar{\partial}^*)$ will have
the same symbol, and hence the same index. This idea naturally leads to the following key observation:

\begin{prop} \label{toddgenus}
Let $(M, J_0)$ be a $4$-manifold with  almost-complex structure, and let $\Sigma\subset M$ be any compact embedded pseudo-holomorphic
curve. Let $p\geq 2$ be an integer, let $\beta = 1/p$, and let $(M, \Sigma , \beta)$ be the orbifold obtained from 
$M$ by declaring the total angle around $\Sigma$ to be $2\pi \beta$. Let $J$ be an orbifold almost-complex structure
on $(M, \Sigma , \beta)$ in the homotopy class discussed in \S \ref{acs}.  Let $\slashed{D}_0$ be a spin$^c$ Dirac operator on $M$ for the spin$^c$ structure
induced by $J_0$, and let  ${\slashed{D}}$ be  a spin$^c$ Dirac operator on $(M,\Sigma, \beta)$ for the  spin$^c$ structure
induced by ${J}$. Then ${\slashed{D}}$ and $\slashed{D}_0$  have the same index: 
$$
\Ind ({\slashed{D}}) = \Ind (\slashed{D}_0) = \frac{(\chi + \tau)(M)}{4}~.
$$
\end{prop}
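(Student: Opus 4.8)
The plan is to compute $\Ind(\slashed{D})$ by the orbifold index theorem of Kawasaki and show that all the contributions from the singular stratum $\Sigma$ collapse, leaving only the ``smooth'' contribution, which reproduces the Todd genus of $M$. First I would reduce to a convenient model: by the discussion in \S\ref{acs}, the homotopy class of $J$ on $(M,\Sigma,\beta)$ is well-defined, so I am free to work in the holomorphic model, where $(M,\Sigma,\beta)$ is modeled near $\Sigma$ on $\CC^2/\ZZ_p$ with $\ZZ_p$ acting by $(w,\zeta)\mapsto(w,e^{2\pi i/p}\zeta)$. Since the index of $\slashed{D}$ depends only on the symbol, and since $\sqrt{2}(\bar\partial+\bar\partial^*)$ has the same symbol as a spin$^c$ Dirac operator for the spin$^c$ structure induced by $J$, it suffices to compute $\Ind(\bar\partial+\bar\partial^*)=\chi((M,\Sigma,\beta),\mathcal O)$.

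Next I would invoke the non-standard acyclic resolution of the structure sheaf recorded just before the statement: when $(M,J_0)$ is an honest complex surface and $\Sigma$ a smooth holomorphic curve, the orbifold Dolbeault resolution of $\mathcal{O}_{(M,\Sigma,\beta)}$ has the same cohomology sheaves as the ordinary one because a $\ZZ_p$-invariant holomorphic function $f(w,\zeta)$ is exactly a holomorphic function $f(w,z)$ with $z=\zeta^p$. Hence $H^{0,k}(M,\Sigma,\beta)\cong H^k(M,\mathcal O)=H^{0,k}(M)$ for all $k$, and the alternating sum gives $\chi((M,\Sigma,\beta),\mathcal O)=\chi(M,\mathcal O)=\tfrac14(\chi+\tau)(M)$ by Noether's formula. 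This settles the integrable case. The deformation argument of \S\ref{acs} then handles a general almost-complex $J_0$: one may homotope $J_0$ (keeping $\Sigma$ pseudo-holomorphic throughout) to an almost-complex structure that is integrable in a tubular neighborhood of $\Sigma$, the index of the spin$^c$ Dirac operator is a homotopy invariant of the spin$^c$ structure, and the spin$^c$ structure on $(M,\Sigma,\beta)$ induced by $J$ varies continuously with that on $M$ induced by $J_0$. One is then free to take a genuinely integrable model near $\Sigma$ while only the almost-complex data away from $\Sigma$ matters for the symbol; but away from $\Sigma$ the orbifold is just $M$, so $\Ind\slashed{D}=\Ind\slashed{D}_0=\tfrac14(\chi+\tau)(M)$ by the ordinary Atiyah-Singer theorem together with the identity $\Ind\slashed{D}_0=\tfrac14(\chi+\tau)(M)$ valid for any almost-complex $4$-manifold.

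Alternatively, and perhaps more robustly, I would run Kawasaki's orbifold index theorem directly. The index is $\int_M \widehat{A}(M)\,e^{c_1(L)/2}$ plus a sum of contributions localized at the singular stratum, one term for each nontrivial group element $g\in\ZZ_p$, each an integral over $\Sigma$ (the fixed-point set of $g$) of a characteristic form in the equivariant Euler class of the normal bundle and the $g$-eigenvalue data of the spin$^c$ bundle. Because the $\ZZ_p$-action near $\Sigma$ is the standard rotation in the normal directions and the spin$^c$ structure comes from $J$, these contributions form a finite exponential/geometric sum in the $p$th roots of unity, and I would show by an elementary summation that they cancel precisely against the discrepancy between $\int_M\widehat{A}e^{c_1(L)/2}$ computed orbifold-theoretically and the smooth Todd number $\chi(M,\mathcal O)$; this is the same cancellation that underlies the equality $H^{0,k}(M,\Sigma,\beta)=H^{0,k}(M)$. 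The main obstacle I anticipate is precisely bookkeeping this fixed-point sum: one must correctly identify the $\ZZ_p$-weights on $\mathbb{V}_\pm=\mathbb{S}_\pm\otimes L^{1/2}$ near $\Sigma$ (the half-canonical twist contributes a shift by a $2p$th root of unity that must be tracked carefully), and verify that the resulting sum of root-of-unity series telescopes to zero. Given the sheaf-theoretic route above, however, one can largely sidestep this computation and simply quote the resolution argument, so I would present that as the main proof and mention the Kawasaki computation as a consistency check.
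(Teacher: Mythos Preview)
Your treatment of the globally integrable case is correct and is exactly what the paper establishes in the paragraphs preceding the proposition: when $(M,J_0)$ is an honest complex surface and $\Sigma$ a holomorphic curve, the orbifold Dolbeault resolution gives $\chi((M,\Sigma,\beta),\mathcal O)=\chi(M,\mathcal O)$.

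The gap is in your extension to a general almost-complex $J_0$. The homotopy from \S\ref{acs} only makes $J_0$ integrable in a \emph{tubular neighborhood} of $\Sigma$, not on all of $M$, so the sheaf-theoretic argument does not apply to the resulting $(M,J_0)$. Your sentence ``away from $\Sigma$ the orbifold is just $M$, so $\Ind\slashed{D}=\Ind\slashed{D}_0$'' is not a proof: the index is global, and Kawasaki's formula shows precisely that the orbifold structure near $\Sigma$ \emph{can} in principle contribute. You have not given any mechanism by which local integrability near $\Sigma$ forces those contributions to vanish.

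The paper closes this gap by a different, and rather slick, device. Kawasaki's theorem implies that $\Ind(\slashed{D})-\Ind(\slashed{D}_0)$ is a universal expression in the local data along the singular stratum, namely $\chi(\Sigma)$ and $[\Sigma]^2$ (for fixed $p$). The integrable case you handled shows this expression vanishes whenever $(\chi(\Sigma),[\Sigma]^2)$ arises from a holomorphic curve in a compact complex surface; but such pairs realize \emph{all} integer values, so the universal expression is identically zero. This universality argument is the missing idea in your first approach. Your alternative direct Kawasaki computation would also work in principle, but you have only sketched it, and (as you yourself note) the bookkeeping of the $\ZZ_p$-weights on $\mathbb{V}_\pm$ is nontrivial; the paper's argument sidesteps that computation entirely.
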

\begin{proof} The index theorem for elliptic operators on orbifolds \cite{kawasaki} implies that the difference $\Ind ({\slashed{D}}) - \Ind (\slashed{D}_0)$ is expressible
in terms of the Euler characteristic and self-intersection of $\Sigma$, since these numbers also determine the restriction of the symbol to the singular set.  
However, we have already seen that $\Ind ({\slashed{D}}) - \Ind (\slashed{D}_0)=0$ if $\Sigma\subset M$ is a holomorphic curve in a compact
complex surface. Since $\chi (\Sigma )$ and $[\Sigma ]^2$ take all possible values 
 in such examples, it follows that $\Ind ({\slashed{D}}) - \Ind (\slashed{D}_0)$
must vanish in full generality. 
\end{proof}

\subsection{The Generalized Hitchin-Thorpe Inequality}

The Euler characteristic $\chi$ and signature $\tau$ of a smooth compact $4$-manifold $M$ 
 may both be calculated by choosing any smooth Riemannian metric $g$ on $M$,
 and then integrating appropriate universal quadratic polynomials
 in the curvature of $g$. When $g$ has an edge-cone singularity, however, 
 correction terms must be introduced in order to compensate for the singularity 
 of the metric along the given surface $\Sigma\subset M$.  In 
  \cite{atleb}, the following formulas were proved for any edge-cone metric $g$ of cone angle $2\pi \beta$ on  a pair $(M,\Sigma)$,
  where $M$ is a smooth compact oriented  $4$-manifold, and $\Sigma \subset M$ is a smoothly embedded  compact oriented surface: 
 
\begin{eqnarray}  \label{gabo}
 \chi (M)  - (1-\beta) \chi (\Sigma ) &=& 
 \frac{1}{8\pi^2}\int_M \left(
 \frac{s^2}{24} + |W_+|^2 + |W_-|^2 -\frac{|\mathring{r}|^2}{2}
 \right) d\mu \qquad \\
 \label{thom}
 \tau (M)  - \frac{1}{3} (1-\beta^2) [\Sigma ]^2 &=& 
 \frac{1}{12\pi^2}\int_M \left(
  |W_+|^2 - |W_-|^2
 \right) d\mu ~.
\end{eqnarray}
In particular, it follows that these formulas are valid when $\beta = 1/p$ and $g$ is an orbifold metric on $(M,\Sigma, \beta )$. 
Indeed, the orbifold versions of these  formulas  are implicit in the earlier work of other authors \cite{izawa,kawasaki0,satake}, and one
of the two proofs of (\ref{gabo}-\ref{thom}) given in \cite{atleb} shows that  the validity of these formulas in general  is actually a logical consequence of their 
valdity for orbifolds. 

The Hitchin-Thorpe inequality \cite{bes,hit,tho} provides an important obstruction to the 
existence of Einstein metrics on $4$-manifolds, and this fact has a natural generalization \cite{atleb} to the 
setting of edge-cone metrics. Indeed, if $g$ is an edge-cone metric of cone angle $2\pi \beta$ on $(M,\Sigma)$,
equations  (\ref{gabo}-\ref{thom}) tell us that 
\begin{eqnarray*}
2 \Big[ \chi (M) - (1- \beta)\chi (\Sigma)
\Big] &\pm&  3 \Big[ \tau (M) - \frac{1}{3}(1- \beta^2) [\Sigma ]^2\Big]\\
&=&
\frac{1}{4\pi^2}\int_M \left(
 \frac{s^2}{24} + 2|W_\pm|^2 -\frac{|\mathring{r}|^2}{2} \right) d\mu 
\end{eqnarray*}
so that the topological expression on the left-hand-side is necessarily non-negative if the given metric $g$ is Einstein.

In this article, we are primarily interested in the special case of $\beta=1/p$, where  
Proposition \ref{reggie}  tells us that any Einstein edge-cone metric is actually an orbifold metric. 
Our focus will be on the case when $M$ admits an almost-complex structure $J_0$
for which $\Sigma$ is a pseudo-holomorphic curve;  as we  saw in \S \ref{acs}, 
the orbifold $(M, \Sigma, \beta)$ then carries an almost-complex structure $J$ which 
is induced by $J_0$. Now notice that the orbifold Chern class of this line bundle satisfies 
\begin{eqnarray*}
(\corb )^2 (M, \Sigma , \beta) &=& (c_1+ (\beta -1) [\Sigma ])^2 
\\&=& c_1^2 + 2(\beta -1) c_1\cdot [\Sigma ] + (\beta -1)^2 [\Sigma ]^2
\\&=& c_1^2 + 2 (\beta -1)\chi (\Sigma) + 2 (\beta -1)  [\Sigma ]^2   + (1-2 \beta +  \beta^2) [\Sigma ]^2
\\&=& c_1^2 - 2 (1- \beta)\chi (\Sigma)- (1-\beta^2) [\Sigma ]^2
\\&=& 2 \Big[ \chi (M) - (1- \beta)\chi (\Sigma)
\Big] + 3 \Big[ \tau (M) - \frac{1}{3}(1- \beta^2) [\Sigma ]^2
\Big] 
\end{eqnarray*}
so that our previous computation tells us that  this quantity  can be expressed in terms of  the curvature of an arbitrary metric $g$
$(M, \Sigma, \beta)$
by 
\begin{equation}
\label{urbi}
(\corb )^2 (M, \Sigma , \beta) =  \frac{1}{4\pi^2}\int_M \left(
 \frac{s^2}{24} + 2|W_+|^2 -\frac{|\mathring{r}|^2}{2}
 \right) d\mu 
\end{equation}
and that $(\corb)^2$ is therefore necessarily non-negative if  $(M, \Sigma , \beta) $ admits an Einstein case. This gives us  
a useful  special case of \cite[Theorem A]{atleb}:

\begin{prop}\label{htal} Let $(M,J_0)$ be a smooth compact $4$-manifold with almost-complex structure, let 
$\Sigma \subset M$ be a compact embedded pseudo-holomorphic curve. Choose an integer $p\geq 2$, and  equip
the orbifold $(M, \Sigma, \beta)$, $\beta = 1/p$, with  the almost-complex structure $J$ which is determined, up to homotopy, 
by $J_0$. If $(M, \Sigma, \beta)$ admits an orbifold Einstein metric $g$, then 
$(\corb )^2(M, \Sigma, \beta)\geq 0$, with equality iff $g$ is Ricci-flat and anti-self-dual. 
\end{prop}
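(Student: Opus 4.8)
The plan is to assemble the ingredients accumulated in the preceding subsections, since Proposition \ref{htal} is essentially a corollary of them. First I would apply Lemma \ref{plf} to write $\corb(M,\Sigma,\beta) = c_1(M) + (\beta - 1)[\Sigma]$, and then carry out the elementary topological identity already displayed just above the statement, which expresses
$$(\corb)^2(M,\Sigma,\beta) = 2\Big[\chi(M) - (1-\beta)\chi(\Sigma)\Big] + 3\Big[\tau(M) - \tfrac{1}{3}(1-\beta^2)[\Sigma]^2\Big].$$
Next I would invoke the generalized Gauss--Bonnet and signature formulas \eqref{gabo}--\eqref{thom}, which hold for orbifold metrics on $(M,\Sigma,\beta)$ whenever $\beta = 1/p$, to convert the right-hand side into the curvature integral \eqref{urbi}, namely
$$(\corb)^2(M,\Sigma,\beta) = \frac{1}{4\pi^2}\int_M\left(\frac{s^2}{24} + 2|W_+|^2 - \frac{|\mathring{r}|^2}{2}\right)d\mu,$$
which is valid for \emph{any} orbifold metric $g$ on $(M,\Sigma,\beta)$.

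The conclusion is then read off directly: when $g$ is Einstein its trace-free Ricci tensor $\mathring{r}$ vanishes identically, so the integrand collapses to $\frac{s^2}{24} + 2|W_+|^2 \geq 0$, whence $(\corb)^2(M,\Sigma,\beta) \geq 0$. For the equality discussion, $(\corb)^2 = 0$ forces $s$ and $W_+$ to vanish at almost every point, hence, by continuity of the curvature of a smooth orbifold metric, identically in every local uniformizing chart. An Einstein metric with $s \equiv 0$ has Einstein constant $\lambda = s/4 = 0$ and is therefore Ricci-flat, while $W_+ \equiv 0$ is by definition the anti-self-duality condition; conversely, a Ricci-flat anti-self-dual metric plainly realizes equality.

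I do not anticipate any genuine obstacle here, as the substantive work has already been done: the only delicate point is the validity of \eqref{gabo}--\eqref{thom} in the orbifold category rather than merely for edge-cone metrics of lower regularity, and this has already been addressed in the remark following those formulas (and in \cite{atleb,izawa,kawasaki0,satake}). Given that, everything reduces to a one-line deduction from \eqref{urbi} together with the equality analysis above.
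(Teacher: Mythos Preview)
Your proposal is correct and follows essentially the same approach as the paper: the argument for Proposition~\ref{htal} is given inline in the paragraphs immediately preceding its statement, via exactly the chain Lemma~\ref{plf} $\to$ topological identity $\to$ (\ref{gabo})--(\ref{thom}) $\to$ equation~\eqref{urbi}, with the Einstein condition killing $\mathring{r}$. Your treatment of the equality case is slightly more explicit than the paper's, but the content is identical.
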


Requiring   that $g$  be Ricci-flat ($r\equiv 0$) and anti-self-dual ($W_+\equiv 0$) is equivalent to asking that $\Lambda^+$ be flat.
This happens precisely when $g$ has restricted holonomy $\subset SU(2)$, and amounts to saying that 
$g$ is {\em locally hyper-K\"ahler}.  The next two results provide all the global information about such  orbifolds
that  will be  needed to prove Theorems \ref{non} and \ref{oui}.

\begin{prop}\label{exclude}
Let $M$ be a smooth compact oriented $4$-manifold such that $b_+(M) \neq 0$, and let $\Sigma \subset M$ be a non-empty  smooth 
compact oriented  surface, possibly with several connected components. For some integer $p\geq 2$, set $\beta = 1/p$,  and let $(M, \Sigma, \beta)$ be 
the orbifold obtained by declaring that the total angle around $\Sigma$ is $2\pi \beta$. Suppose that $(M, \Sigma, \beta)$ does not
admit orbifold metrics of positive scalar curvature, but does admit  a scalar-flat anti-self-dual  orbifold metric $g$. 
Then $b_+(M)=1$, $p\in \{ 2, 3, 4\}$, and $g$ is  K\"ahler and  Ricci-flat. Moreover,  $M$ carries an integrable  complex structure $J_0$ 
such that $\Sigma\subset M$ is  a holomorphic curve, and $(M,J_0)$ is either 
 a rational complex surface
or a  finite quotient of  $\CP_1$ times  an  elliptic  curve.  
Finally, the orbifold $(M, \Sigma, \beta)$ is a global quotient: if $g$ is  flat, it is the product of two elliptic curves, divided by  a finite  group; otherwise, 
it is a Calabi-Yau  $K3$  divided by  an isometric action of $\ZZ_p$.
\end{prop}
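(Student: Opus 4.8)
The plan is to combine the orbifold Weitzenböck/curvature-integral machinery of the previous subsections with the classification of hyper-Kähler (and more generally scalar-flat anti-self-dual) 4-orbifolds, mimicking the manifold argument but keeping careful track of the singular locus. First I would observe that the Weitzenböck formula \eqref{friend} applied to self-dual harmonic 2-forms, together with the hypothesis that $(M,\Sigma,\beta)$ admits no orbifold metric of positive scalar curvature, forces every such harmonic self-dual form for the scalar-flat anti-self-dual metric $g$ to be parallel; since $b_+(M)\neq 0$ there is at least one such form, so $\Lambda^+$ has a nontrivial parallel section and hence the restricted holonomy of $g$ lies in $SU(2)$, i.e. $g$ is locally hyper-Kähler — in particular Kähler, Ricci-flat, and scalar-flat. (If $b_+(M)\geq 2$ one would get at least two, hence three, independent parallel self-dual 2-forms, making $\Lambda^+$ flat with a trivializing frame; standard arguments as in the smooth Hitchin–Thorpe discussion then show the universal orbifold cover is flat $\RR^4$ or $T^4$ or $K3$, and a case check against $b_+$ rules out $b_+\geq 2$, giving $b_+(M)=1$.)

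Next I would invoke Proposition~\ref{htal}-type reasoning in reverse: being Kähler, the metric $g$ determines an integrable orbifold complex structure $J$, and by the regularity already available (the metric is orbifold-smooth by hypothesis) $J$ is a genuine orbifold complex structure on $(M,\Sigma,\beta)$ for which $\Sigma$ — the singular set — is a pseudo-holomorphic, hence holomorphic, curve. Passing to the underlying space and resolving/using the holomorphic model of \S\ref{acs}, $M$ itself acquires an integrable complex structure $J_0$ with $\Sigma$ a holomorphic curve, and $(M,J_0)$ is a compact complex surface carrying a Ricci-flat Kähler orbifold metric. The Kodaira classification of compact complex surfaces with $c_1$ numerically trivial (or with $b_+=1$ and a Ricci-flat Kähler orbifold metric) then leaves only: rational surfaces (for the $b_+=1$, non–minimal-model cases after blow-downs), or finite quotients of $\CP_1\times E$ with $E$ elliptic, or quotients of a $K3$ or a complex torus. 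One matches these against $b_+(M)=1$ to exclude the honest $K3$ and honest torus as the manifold $M$ itself, leaving the rational or $\CP_1\times$elliptic-quotient alternatives for $M$, while the orbifold universal cover is $T^4$ (in the flat case) or $K3$ (in the non-flat Ricci-flat case).

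The final structural statement — that $(M,\Sigma,\beta)$ is a global quotient, a product of two elliptic curves by a finite group in the flat case, and a Calabi–Yau $K3$ by an isometric $\ZZ_p$-action otherwise, with $p\in\{2,3,4\}$ — I would extract from the orbifold uniformization: the developing map of the locally hyper-Kähler orbifold structure identifies the universal orbifold cover with one of the standard flat or $K3$ hyper-Kähler manifolds, and the deck group is a finite group of hyper-Kähler isometries. The constraint $p\in\{2,3,4\}$ comes from the local model $\RR^4/\ZZ_p$ at $\Sigma$: for this to be a quotient singularity of a hyper-Kähler (hence Ricci-flat Kähler) orbifold one needs the $\ZZ_p$-action to act with an isolated-in-the-normal-direction fixed locus compatibly with a holomorphic symplectic form, which restricts the rotation number, and globally the surface-group of isometries of $K3$ (or $T^4$) acting with a 2-dimensional fixed surface and cyclic stabilizer of order $p$ forces $p\le 4$ (cf. the known classification of finite symplectic/non-symplectic automorphisms). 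I expect the main obstacle to be precisely this last point: pinning down that $p\in\{2,3,4\}$ and that the $K3$ case is a genuine $\ZZ_p$-quotient (rather than a quotient by a larger group with $\ZZ_p$ stabilizers along $\Sigma$) requires either a direct holonomy/fixed-point analysis at $\Sigma$ or an appeal to the classification of Ricci-flat Kähler orbifold surfaces; the rest is bookkeeping built on the Weitzenböck estimate and Kodaira's classification.
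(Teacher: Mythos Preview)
Your outline has a genuine gap at the very first step: a single nontrivial parallel section of $\Lambda^+$ only forces the holonomy into $U(2)$, not $SU(2)$; scalar-flat K\"ahler metrics that are not Ricci-flat abound (e.g.\ on blow-ups of ruled surfaces), and for all of them $s=0$ and $W_+=0$ yet $\Lambda^+$ is not flat. So you cannot conclude ``locally hyper-K\"ahler'' from $s=0$, $W_+=0$, and $b_+\neq 0$ alone. The paper instead first gets K\"ahler from the parallel form, and only then invokes the no-PSC hypothesis via Bourguignon's argument (short-time Ricci flow followed by a Yamabe-type rescaling) to force Ricci-flatness; you mention the no-PSC hypothesis but deploy it in the wrong place---the Weitzenb\"ock formula \eqref{friend} with $s=W_+=0$ already yields parallelism with no extra input.

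Your approach to the remaining structural claims also diverges substantially from the paper's, in ways that leave real work undone. For $b_+(M)=1$ the paper observes directly that the local $\ZZ_p$-action on $\Lambda^+$ at any point of $\Sigma$ is a rotation fixing only a line, so parallel (hence $\ZZ_p$-invariant) self-dual forms span at most a one-dimensional space---much cleaner than routing through the orbifold universal cover. For $p\in\{2,3,4\}$ the paper never touches the automorphism classification of $K3$: once $g$ is Ricci-flat K\"ahler one has $\corb=0$, hence by Lemma~\ref{plf} $c_1(M,J_0)=(p-1)\big([\Sigma]-c_1(M,J_0)\big)$ is divisible by $p-1$ in integral cohomology; intersecting $c_1$ with a ruling fiber, an exceptional curve, or a line in $\CP_2$ then forces $p-1\mid 2$, $p-1\mid 1$, or $p-1\mid 3$ respectively. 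Finally, the paper builds the global-quotient structure by hand as the $p$-fold cyclic branched cover $Y\to M$ determined by a $p^{\rm th}$ root of the divisor line bundle of $\Sigma$, and then computes $\chi(Y)=24$ case-by-case (so $Y$ is $K3$) or, when $b_1(M)\neq 0$, exhibits a parallel holomorphic $1$-form to show $Y$ is flat---rather than appealing to an abstract developing map whose compactness and deck group you have not pinned down.
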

\begin{proof}
Since $b_+(M)= b_+(M, \Sigma, \beta)$ is non-zero, the orbifold $(M, \Sigma, \beta)$ carries some non-trivial self-dual harmonic $2$-form 
$\psi$. Because we have assumed that $s$ and $W_+$ vanish identically, the Weitzenb\"ock formula \eqref{friend} simplifies to read
$$0= (d+d^*)^2 \psi =  \nabla^* \nabla \psi$$
so that $\int_M |\nabla \psi|^2 d\mu$  vanishes, and every such harmonic form $\psi$ must therefore 
be parallel. However,  any point $p\in \Sigma\subset M$ is covered by a
local uniformizing chart  with local uniformizing goup $\ZZ_p$; moreover, we can take this chart to consist of  geodesic normal coordinates
about $p$, so that $\ZZ_p< SO(2)$ acts on $\RR^4$ by rotation about $\RR^2$. But   $SO(2) < SO(4)$, and hence $\ZZ_p< SO(4)$, acts 
faithfully on $\Lambda^+$ via rotation about an axis;
in particular, this action preserves only a $1$-dimensional subspace of $\Lambda^+$. It follows that there is at most  a $1$-dimensional 
space of parallel  self-dual $2$-forms $\psi$ on $(M,\Sigma, \beta)$, and since this space coincides with $\mathcal{H}^+_g\neq 0$,
it follows it is exactly $1$-dimensional. In particular, $b_+(M)=1$. Moreover, since the point-wise norm of a parallel form $\psi$ is constant, 
there is, up to sign, a unique parallel self-dual $2$-form $\omega$ on $M$ with $|\omega | \equiv \sqrt{2}$. 
This form is the K\"ahler form associated with a unique complex structure $J$ on $(M,\Sigma, \beta)$, and in a locally uniformizing chart 
near any $p\in \Sigma$, this become a complex structure on $\RR^4$ which is preserved the local $\ZZ_p$ action. In particular, 
 the local $\ZZ_p$-action   is by holomorphic maps with fixed point set $\Sigma$, so that $\Sigma$ is  a holomorphic curve in  local 
 complex coordinates, and $M$ can locally be thought of as  a $p$-fold cyclic branched quotient. Choosing two generators for
 the  $\ZZ_p$-invariant  local holomorphic 
 functions now gives us complex coordinates on $M$. This equips $M$ with an integrable almost-complex structure $J_0$,  and turns 
 $\Sigma\subset M$ into  a complex curve
 in the compact complex surface $(M,J_0)$. Moreover, since $b_+(M)=1$ is odd, $(M,J_0)$ is necessarily   \cite{bpv} of K\"ahler type. 
 
 Since we have also assumed that $(M,\Sigma, \beta)$ does not admit any metrics of positive scalar curvature, an argument due to 
 Bourguignon  \cite{bes,bouric} shows that our scalar-flat metric must be Ricci-flat; indeed, one could otherwise produce a
 metric with $s> 0$ by following the Ricci flow for a short time, and then conformally rescaling by the lowest eigenfunction of the Yamabe Laplacian. 
 Thus $(M, \Sigma, \beta ), g, J)$ is actually a Ricci-flat K\"ahler orbifold, and in particular has $\corb =0$. 
 Lemma \ref{plf}  therefore now  tells us that 
 $$ 0 = \corb  (M,\Sigma, \beta) = c_1(M,J_0) + \left(\frac{1}{p} -1\right) [\Sigma ] $$
 in $H^2(M, \QQ)$. 
 In particular,  the canonical line bundle $K$ of the compact complex surface $(M,J_0)$ has negative degree respect to 
 any K\"ahler form $\omega_0$, so no positive power of $K$ can have a holomorphic section. Thus, the Kodaira dimension of the K\"ahler surface
 $(M,J)$ is $-\infty$, and surface classification \cite{bpv,gh} tells us that $(M, J_0)$ is either rational or ruled. 
 
 In particular, $H_1(M,\ZZ)$ is
 torsion-free, and the same therefore goes for $H^2(M,\ZZ )$. The equation 
 $$
  [\Sigma ] =  p \Big( [\Sigma ] - c_1 (M, J_0)\Big) 
 $$
 is therefore valid in integer cohomology, rather than just rationally. It follows that the divisor line bundle $D$ of
 $\Sigma$ has a $p^{\rm th}$ root $E\to M$, so that  $D= E^{\otimes p}$ as holomorphic line bundles. 
  We can therefore construct  a $p$-fold cyclic branched covering $\varpi: Y\to M$
 branched along $\Sigma$ by setting
  $$Y:= \{ \zeta \in E~|~ \zeta^{\otimes p} = f\}
 $$ 
 where $f\in H^0(M, \mathcal{O} (D))$ vanishes exactly at $\Sigma$.
The pull-back $\hat{g}= \varpi^* g$  then makes $(Y,\hat{g})$ into a compact Ricci-flat K\"ahler surface, 
and displays $\left((M, \Sigma, \beta ), g, J\right)$ as a  global quotient $(Y,\hat{g})/\ZZ_p$ of some locally hyper-K\"ahler manifold. 
In particular, surface classification tells us that $Y$ is finitely covered by either $K3$ or $T^4$. Thus  $\chi (Y) \leq 24$, with 
equality iff $Y$ is a $K3$ surface. 

The possible values of $p$ are severely constrained. Indeed, since
$$
 c_1 (M, J_0)  =  (p -1) \Big([\Sigma ] - c_1 (M, J_0) \Big)
 $$
 in integer cohomology, $c_1 (M, J_0)$ must be divisible by $(p-1)$. If $(M,J_0)$ is ruled, and if ${\zap F}$ is the fiber class, 
 we have  $c_1\cdot {\zap F}=2$, so $p-1$ divides $2$, and $p$ is either $2$ or $3$. If $(M,J_0)$ is non-minimal, 
 it contains an exceptional curve ${\zap E}$ on which $c_1\cdot {\zap E}=1$, so $p-1$ divides $1$, and $p=2$. Finally, if
 $M=\CP_2$, it contains a line ${\zap L}$ on which $c_1\cdot {\zap L}=3$, so $p-1$ divides $3$, and $p=2$ or $p=4$. 
 
 Now consider what happens if $b_1(M) =0$. In this case,  $h^{1}(M, \mathcal{O})=h^{2}(M, \mathcal{O})=0$, so 
  $\chi (M, \mathcal{O})=1$. Now,  since $Y$ is a $p$-fold branched cyclic cover, 
  $$\chi (Y)= p \chi (M) + (1-p) \chi (\Sigma).$$  On the other hand, $\corb = c_1(M) + (\beta -1) \Sigma =0$, so 
  $\chi (\Sigma ) + \beta [\Sigma ]^2=0$  by adjunction, and hence $\chi(\Sigma) = -[p/(p-1)^2]c_1^2$. We therefore conclude that
 \begin{eqnarray*}
\chi (Y)
&=& \frac{p}{p-1}\Big[ (p-1)  \chi (M) +c_1^2(M)\Big] \\
&=& \frac{p}{p-1}\Big[ (p-2)  \chi (M) +12 \chi (M, \mathcal{O})\Big] \\
&=& \frac{p}{p-1}\Big[ (p-2)  \chi (M) +12 \Big] .
\end{eqnarray*}
If $p=2$, it follows that $\chi(Y)=24$. 
If $p=3$, $M$ must be a Hirzebruch surface, with $\chi (M)=4$, so again $\chi(Y)=24$.
If $p=4$, $M$ must be $\CP_2$, with $\chi (M)=3$, and once again $\chi (Y)=24$. Thus, the hyper-K\"ahler manifold 
$Y$ must be a $K3$ surface whenever $b_1(M) =0$.

Now suppose that $b_1(M)\neq 0$. Then there is a holomorphic $1$-form $\varphi \not\equiv 0$ on $(M, J)$, and 
this pulls back to a a holomorphic $1$-form $\hat{\varphi}$ on $Y$. However, $Y$ is Ricci-flat, so $\hat{\varphi}\not\equiv 0$ is 
both parallel  and   $\ZZ_p$-invariant. It follows that $(Y, \hat{g})$ is flat, and that 
$\varphi$ restricts to a $\Sigma$ as a non-zero holomorphic $1$-form. Each component of $\Sigma$ is therefore
a $2$-torus, and submerses holomorphically onto the base of our ruled surface via the Albanese map. Thus $(M,J_0)$ must be a ruled surface over 
an elliptic curve $T^2$. Moreover, since $\varphi$ is non-zero everywhere, every fiber of $M\to T^2$ is non-singular; and since 
$\varphi$ is 
 parallel, every fiber of the 
the ruling is totally geodesic, and so is itself a flat orbifold. This means that the fibers of $Y\to T^2$ are elliptic curves which are $p$-to-$1$ cyclic branched covers
of the $\CP_1$ fibers of $M\to T^2$, with $4$ branched points if $p=2$, or $3$ branched points if $p=3$. 
The parallel  $(1,0)$-vector field $\xi=\bar{\varphi}^\sharp$ on $Y$ is necessarily holomorphic and $\ZZ_p$-invariant, so $Y$ also carries 
a $\ZZ_p$-invariant holomorphic foliation transverse to the fibers of $Y\to T^2$, and this foliation induces a flat projective connection on 
$M\to T^2$ with monodromy consisting of permutations of the  branch points in which $\Sigma$ meets the fibers. Pulling this back
to a finite cover of $T^2$ then gives us a finite cover of $M$ biholomorphic to $\CP_1\times T^2$, and the corresponding cover of $Y$
is then biholomorphic to the product of two elliptic curves. 
\end{proof}

\begin{prop}
Let $M$ be a smooth compact oriented $4$-manifold such that  $b_+(M) \neq 0$, and let $\Sigma \subset M$ be a (non-empty)  smooth 
compact oriented  surface. For some integer $p\geq 2$, set $\beta = 1/p$,  let $(M, \Sigma, \beta)$ be 
the orbifold obtained by declaring that the total angle around $\Sigma$ is $2\pi \beta$, and suppose that 
$(M, \Sigma, \beta)$ admits a scalar-flat anti-self-dual  orbifold metric $g$. Then $g$ is K\"ahler, and $b_+(M)=1$.
Moreover, $g$ is Ricci-flat iff $(M, \Sigma, \beta)$ does
not admit orbifold metrics of positive scalar curvature. 
\end{prop}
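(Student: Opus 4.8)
The plan is to recover the first two assertions by repeating, almost verbatim, the opening of the proof of Proposition~\ref{exclude}, and then to obtain the final equivalence by pairing Bourguignon's Ricci-flow argument with the branched-cover reduction used in that same proof. First, since $s\equiv 0$ and $W_+\equiv 0$, the Weitzenb\"ock formula \eqref{friend} collapses to $(d+d^*)^2\psi=\nabla^{*}\nabla\psi$ on self-dual $2$-forms, so every self-dual harmonic $2$-form on $(M,\Sigma,\beta)$ is parallel; and since $b_+(M)=b_+(M,\Sigma,\beta)\neq 0$, there is a nonzero such form. A geodesic-normal local uniformizing chart about a point of $\Sigma$ displays the local uniformizing group $\ZZ_p$ as a subgroup of the rotations $SO(2)\subset SO(4)$ about $\RR^2$, and these act on $\Lambda^+$ fixing only a line; hence the space $\mathcal{H}^+_g$ of parallel self-dual $2$-forms is exactly one-dimensional, so $b_+(M)=1$, and normalizing the parallel form to have pointwise norm $\sqrt{2}$ produces the K\"ahler form of a $g$-compatible complex structure $J$ on $(M,\Sigma,\beta)$. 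Thus $g$ is K\"ahler and $b_+(M)=1$.

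For the equivalence, the ``only if'' direction is Bourguignon's argument \cite{bes,bouric}: if $g$ were scalar-flat but not Ricci-flat, flowing by the Ricci flow for a short time and then conformally rescaling by the lowest eigenfunction of the Yamabe Laplacian would yield an orbifold metric of positive scalar curvature, contrary to hypothesis; so when no such metric exists, $g$ must be Ricci-flat.

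For the ``if'' direction, suppose $g$ is Ricci-flat. As in Proposition~\ref{exclude}, the local $\ZZ_p$-actions are then holomorphic, so $M$ carries an integrable complex structure $J_0$ making $\Sigma$ a holomorphic curve in a compact complex surface $(M,J_0)$, which is of K\"ahler type because $b_+=1$ is odd \cite{bpv}. Now $((M,\Sigma,\beta),g,J)$ is Ricci-flat K\"ahler, so $\corb(M,\Sigma,\beta)=0$; by Lemma~\ref{plf}, $c_1(M,J_0)=(1-\beta)[\Sigma]$ has positive degree relative to any K\"ahler class, hence $(M,J_0)$ has Kodaira dimension $-\infty$ and is rational or ruled \cite{bpv,gh}. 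In particular $H^2(M,\ZZ)$ is torsion-free, so the divisor line bundle of $\Sigma$ has a $p$-th root $E$, and the $p$-fold cyclic cover $\varpi:Y\to M$ branched along $\Sigma$, namely $Y=\{\zeta\in E:\zeta^{\otimes p}=f\}$ with $(f)=\Sigma$, is a smooth compact complex surface on which $\varpi^*g$ is Ricci-flat K\"ahler; hence, exactly as in Proposition~\ref{exclude}, $Y$ is finitely covered by a $K3$ surface or a $4$-torus. Neither $K3$ nor $T^4$ admits a metric of positive scalar curvature --- for $K3$ by Lichnerowicz, since $\hat{A}(K3)\neq 0$, and for $T^4$ by the Schoen--Yau/Gromov--Lawson torus theorem --- and positive scalar curvature lifts to finite covers, so $Y$ carries no such metric. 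But any orbifold metric of positive scalar curvature on $(M,\Sigma,\beta)$ would pull back under $\varpi$ to a smooth metric of positive scalar curvature on $Y$, because near $\varpi^{-1}(\Sigma)$ a chart of $Y$ is nothing but a local uniformizing chart of the orbifold, while away from $\varpi^{-1}(\Sigma)$ the map $\varpi$ is a local diffeomorphism. This contradiction shows that $(M,\Sigma,\beta)$ admits no orbifold metric of positive scalar curvature, which completes the argument.

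The main obstacle is the ``if'' direction: one has to verify that the branched-cover machinery of Proposition~\ref{exclude} --- torsion-freeness of $H^2(M,\ZZ)$, existence of the $p$-th root $E$, and the identification of $Y$ up to finite covers --- depends only on Ricci-flatness of $g$ and not on the absence of positive-scalar-curvature metrics (which was a standing hypothesis there), and that the pullback under $\varpi$ of a hypothetical positive-scalar-curvature orbifold metric really is an honest smooth Riemannian metric on the smooth $4$-manifold $Y$, so that the classical obstructions to positive scalar curvature apply to it.
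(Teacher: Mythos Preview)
Your argument is correct and follows essentially the same route as the paper's own proof: Weitzenb\"ock gives parallel self-dual harmonic forms, the $\ZZ_p$-action at points of $\Sigma$ forces $b_+=1$ and produces the K\"ahler structure, Bourguignon handles one direction of the equivalence, and the branched-cover reduction to $K3$ or $T^4$ (borrowed from Proposition~\ref{exclude}) handles the other. You are also right that the relevant portion of Proposition~\ref{exclude} --- construction of $J_0$, torsion-freeness of $H^2(M,\ZZ)$, the $p$-th root $E$, and the identification of $Y$ up to finite covers --- uses only that $g$ is Ricci-flat K\"ahler, not the standing no-PSC hypothesis there.

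One cosmetic slip: you have interchanged the ``if'' and ``only if'' labels. In ``$g$ is Ricci-flat iff $(M,\Sigma,\beta)$ admits no PSC metric,'' the \emph{if} direction is ``no PSC $\Rightarrow$ Ricci-flat,'' which is what Bourguignon gives; the \emph{only if} direction is ``Ricci-flat $\Rightarrow$ no PSC,'' which is your branched-cover argument. The mathematics is unaffected, but the labels should be swapped.
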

\begin{proof} Given that $g$ is anti-self-dual and scalar-flat, the Weitzenb\"ock formula \eqref{friend} again shows that every self-dual 
harmonic $2$-form is parallel. Since $b_+(M) \neq 0$,  there must be at least one non-zero parallel self-dual form, so $g$ is globally K\"ahler. 
If it is not Ricci-flat, then $\Lambda^+$ is not flat, and so there cannot be a self-dual harmonic form which is linearly independent from the
first one, so $b_+(M)=1$; moreover, Bourguignon's argument \cite{bes,bouric} shows that $(M,\Sigma, \beta)$ also admits positive scalar curvature metrics. 

 On the other hand, if $g$ is Ricci-flat K\"ahler, the proof of Proposition \ref{exclude} shows that we still have $b_+(M)=1$, but that 
$(M, \Sigma, \beta)$  is in this case a global quotient of $K3$ or $T^4$ by a finite group. However, neither $K3$ nor $T^4$ admits
Riemannian metrics of positive scalar curvature \cite{bes, gvln2, spccs, lic}. Since orbifold metrics on   $(M, \Sigma, \beta)$  are really just
Riemannian metrics on $K3$ or $T^4$ which are invariant under the action of the appropriate finite group, it follows that $(M, \Sigma, \beta)$
then does not admit orbifold metrics of positive scalar curvature. 
\end{proof}

\subsection{Almost-K\"ahler Geometry}

A rather special set of techniques can be applied if, for a given  the orbifold metric $g$, there  happens to 
be a harmonic  self-dual 2-form $\omega\in {\mathcal H}^+_g$ with constant 
point-wise norm  $|\omega|_g \equiv \sqrt{2}$. In this case, there is 
an associated orbifold almost-complex structure $J:TM\to TM$, $J^2 =1$, defined by 
$$ g (J\cdot, \cdot ) = \omega (\cdot , \cdot ) ,$$
and this almost-complex structure then acts on $TM$ in a $g$-preserving fashion. 
The triple $(M,g,\omega )$ is then said to be an {\em almost-K\"ahler $4$-orbifold}. 
Because $J$ allows one to to think of $TM$ as a complex vector bundle, 
it is only natural to look for a connection on its anti-canonical $V$-line bundle 
$L=\wedge^2 T^{1,0}_J\cong \Lambda^{0,2}_J$ in order to use 
the Chern-Weil theorem in order to  express $\corb (J)$ as 
$$\corb (J)= [\frac{i}{2\pi}F]\in H^2_{DR}(M, \RR)~,$$ 
where $F$ is the curvature of
the relevant connection on $L$. A particular choice of 
Hermitian connection on $L$ was  first introduced in the manifold context by Blair \cite{blair}, and   later rediscovered
by Taubes \cite{taubes} for entirely different reasons. Of course, all the local calculations this entails 
are also valid for orbifolds. In partocular, the curvature
$F_{\zap B}=F_{\zap B}^++F_{\zap B}^-$ of 
this {\em Blair connection}  is given  \cite{td,lsymp}  by 
\begin{eqnarray}
iF_{\zap B}^+ &=& \frac{s+s^*}{8}\omega+  W^+(\omega )^\perp   \label{bla1}\\
iF_{\zap B}^-&=&  \frac{s-s^* }{8}\hat{\omega}+ \mathring{\varrho} \label{bla2}
\end{eqnarray}
where  the so-called {\em star-scalar curvature} is given by 
$$s^*=s+ |\nabla \omega |^2 = 2W_+ (\omega , \omega ) +  \frac{s}{3}~,$$
while   $W^+(\omega )^\perp$
  is the component of $W^+(\omega )$ orthogonal to $\omega$,  
  $$\mathring{\varrho}(\cdot , J\cdot )  = \frac{\mathring{r} +  J^*\mathring{r} }{2} ,$$
and where  the  anti-self-dual $2$-form 
 $\hat{\omega}\in \Lambda^-$
 is  defined only on the open set where $s^* - s\neq 0$,  and   satisfies 
 $|\hat{\omega}|\equiv \sqrt{2}.$

An important special case occurs when $\nabla \omega=0$. This happens precisely
when 
$J$ is integrable, and $g$ is a  K\"ahler metric compatible with $J$. In this case, $s=s^*$, $\omega$ is an eigenvector of the 
$W_+$,  $r$ is $J$-invariant, and $iF_{\zap B}$ is the 
Ricci form of $(g,J)$. In fact,
$\omega$ is an eigenvector of $W_+$ with eigenvalue $s/6$, whereas the 
elements of $\omega^\perp= \Re e \Lambda^{2,0}_J$ are eigenvectors of eigenvalue $-s/12$. 

K\"ahler metrics of  constant negative scalar curvature $s$  play 
a privileged  role in $4$-dimensional geometry. For our purposes, though, it will sometimes
be useful to regard them as belonging to the following  broader class of almost-K\"ahler
metrics:

\begin{defn} \label{fatso} 
An almost-K\"ahler metric $g$  on a $4$-dimension orbifold    will be said to be 
{\em saturated} if 
 \begin{itemize}
  \item $s+s^*$ is a negative constant; 
 \item the associated symplectic form $\omega$ belongs to the lowest eigenspace of $W_+: \Lambda^+\to \Lambda^+$ 
at each point; and 
 \item the two largest eigenvalues of  $W_+: \Lambda^+\to \Lambda^+$ 
 are   everywhere equal.
 \end{itemize}
\end{defn}

\section{Seiberg-Witten Theory}

Suppose that $M$ is a smooth compact oriented $4$-manifold, let $J_0$ be an 
almost-complex structure on $M$, and let $\Sigma\subset M$ be a pseudo-holomorphic curve;
that is, let $\Sigma$ be  a compact, smoothly embedded surface such that $J_0(T\Sigma) = T\Sigma$. We now
choose some integer $p\geq 2$, set $\beta = 1/p$, and let $(M,\Sigma, \beta)$ be the smooth compact
oriented orbifold with underlying topological space $M$, regular set $M-\Sigma$, and with total angle
$2\pi \beta$  around $\Sigma$. 
As we have
saw in \S \ref{acs}, we can then endow $(M,\Sigma, \beta)$ with an almost-complex structure
$J$ which agrees with $J_0$ outside a tubular neighborhood of $\Sigma$, and such that
$\Sigma$ is also a pseudo-holomorphic curve with respect to $J$. 
This allows us to 
define vector $V$-bundles $\Lambda^{0,k}$, and we can then set 
\begin{eqnarray*}
\mathbb{V}_+&=& \Lambda^{0,0} \oplus \Lambda^{0,2}\\
\mathbb{V}_-&=& \Lambda^{0,1}
\end{eqnarray*}
so that $\det \mathbb{V}_+= \det \mathbb{V}_-=  \Lambda^{0,2}$.
If we choose an orbifold metric $g$ which is $J$-invariant, these bundles
then become the twisted spinor bundles of an orbifold spin$^c$ structure
on $(M,\Sigma, \beta)$, and we formally have 
$$
\mathbb{V}_\pm =  \mathbb{S}_\pm  \otimes L^{1/2} 
$$
where $L=K^{-1}\cong \Lambda^{0,2}$ is the anti-canonical line bundle of $J$. 
Notice that, by Lemma \ref{plf}, we have 
$$
\corb (L) = c_1 (M, J_0) + (\beta -1) [\Sigma ].$$
Since any other orbifold metric on our orbifold is obtained from $g$ by a self-adjoint automorphism
of the tangent bundle, this construction also induces unique choices of  twisted spinor bundles $\mathbb{V}_\pm$ for any other metric
on $(M,\Sigma, \beta)$. 

We now endow $L$ with a fixed Hermitian inner product $\langle~,~\rangle$. 
Every 
unitary 
connection ${\zap A}$ on then $L$ induces a unitary connection 
$$\nabla_{\zap A}: \mathcal{E} ({\mathbb V}_{+})\to \mathcal{E} (\Lambda^1\otimes {\mathbb V}_{+}),$$
and composition of this with the natural {\em Clifford multiplication} homomorphism
$$\Lambda^1\otimes {\mathbb V}_{+}\to {\mathbb V}_{-}$$
gives one  \cite{hitharm,lawmic} a 
spin$^c$ Dirac operator 
$$\drc_{\zap A}: \mathcal{E} ({\mathbb V}_{+})\to \mathcal{E} ({\mathbb V}_{-}).$$
Because of our special choice of spin$^c$ structure, this is an elliptic operator whose index, in complex-linear terms, was shown in 
Proposition \ref{toddgenus} to equal
the Todd genus
$$
\Ind_\CC (\drc_{\zap A}) = \dim_\CC \ker  (\drc_{\zap A})- \dim_\CC \ker \drc^*_{\zap A}) = \frac{(\chi + \tau )(M)}{4}
$$
despite the fact that we are working on the orbifold $(M,\Sigma, \beta )$ rather than on the original 
$4$-manifold $M$. 
%
%Of course, this immediately tells us that the index of the operator is, in real-linear terms, instead given by 
%$$
%\Ind_\RR (\drc_{\zap A}) = \dim_\RR \ker  (\drc_{\zap A})- \dim_\RR \ker \drc^*_{\zap A}) = \frac{(\chi + \tau )(M)}{2}~.
%$$

We will obtain our main results  by studying the Seiberg-Witten equations 
\begin{eqnarray} \drc_{\zap A}\Phi &=&0\label{drc}\\
 F_{A}^+&=&i \sigma(\Phi) ,\label{sd}\end{eqnarray}
where both the twisted spinor $\Phi$ and the unitary connection ${\zap A}$ are
treated as unknowns. Here
$F_{\zap A}^+$ is the self-dual part of the 
curvature  of ${\zap A}$, while 
 the natural real-quadratic map 
$\sigma : {\mathbb V}_+ \to \Lambda^+$
defined by $\sigma (\Phi) = -\frac{1}{2}\Phi\otimes \bar{\Phi}$ satisfies 
$$|\sigma (\Phi ) | = \frac{1}{2\sqrt{2}}|\Phi |^{2}.$$
These equations are non-linear, but they become an 
 elliptic first-order system 
once one imposes the `gauge-fixing' condition
\begin{equation}
\label{gauged}
d^* ({\zap A}-{\zap A}_0)=0
\end{equation}
relative to some arbitrary background connection ${\zap A}_0$;
this largely eliminates 
 the natural action of the `gauge group' 
of  automorphisms of the Hermitian line bundle 
$L\to M$, reducing it to the action of the $1$-dimensional group 
$U(1) \rtimes H^1(M, \ZZ)$ of harmonic maps $M\to S^1$.

In order to obtain an invariant that may in principle force the system (\ref{drc}-\ref{sd}) to have a solution, one
first considers the {\em perturbed} Seiberg-Witten equations
\begin{eqnarray} \drc_{\zap A}\Phi &=&0\label{drca}\\
i F_{A}^++\sigma(\Phi)&=&\eta ,\label{sda}\end{eqnarray}
where $\eta$ is a real-valued  self-dual $2$-form. We will say that 
the perturbation $\eta$ is {\em good} if 
$$\eta^H \neq 2\pi [\corb (L)]^+$$
where $\eta^H\in \mathcal{H}^+\subset \mathcal{E}^2(M,\Sigma, \beta)$ is the $L^2$-orthogonal projection of $\eta$ onto the harmonic $2$-forms
and where $[\corb (L)]^+\in \mathcal{H}^+ \subset H^2(M, \RR)$ is the cup-orthogonal projection into the deRham classes with self-dual representatives relative
to $g$. Whenever $\eta$ is a good perturbation, any solution $(\Phi, {\zap A})$ of   (\ref{drca}-\ref{sda}) 
is necessarily {\em irreducible}, in the sense that $\Phi\not\equiv 0$. Even in the presence of the gauge-fixing condition
\eqref{gauged}, this implies  that $U(1) \rtimes H^1(M, \ZZ)$ acts freely on the space of solutions.

 If $(g, \eta)$ is a pair consisting of a orbifold-smooth metric and a self-dual $2$-form which is a good perturbation with respect to $g$,
we will say that $(g, \eta)$ is a good pair. The set of good pairs 
  is connected if $b_+(M) > 1$. If $b_+(M)=1$, it instead consists of two connected components, called {\em chambers}. 
  Indeed, when $b_+(M)=1$, the intersection form on $H^2(M, \RR)$ is of Lorentz type, so that $H^2(M, \RR)$ can 
  be thought of as a copy of $b_2(M)$-dimensional Minkowski space. The set of vectors $\mathbf{a}\in H^2(M, \RR)$ with 
  $\mathbf{a}^2 > 0$ then becomes the set of time-like vectors, and has two connected components, and we may give this Minkowski space
  a  time-orientation by labeling one the 
 {\em future-pointing},  and the other {\em past-pointing}, time-like vectors. For any good pair $(g,\eta)$, the vector 
  $2\pi [c_1(L)]^+ - \eta$ is  a time-like vector. One chamber, which we will label $\uparrow$, consists  of those pairs $(g, \eta )$ 
  for which  $2\pi [c_1(L)]^+ - \eta$ is future-pointing; the other chamber, which we will call  $\downarrow$, consists  of those pairs $(g, \eta )$ 
  for which  $2\pi [c_1(L)]^+ - \eta$ is past-pointing.

  Equations  (\ref{drca}-\ref{sda}) imply the Weizenb\"ock formula
  \begin{equation}
\label{petweit}
  0 = 2\Delta |\Phi |^2 + 4 |\nabla_{\zap A} \Phi |^2 + 
s |\Phi |^2 +  |\Phi|^4 - 8 \langle \eta , \sigma (\Phi ) \rangle 
\end{equation}
  and from this one obtains the important $C^0$ estimate that any  solution must satisfy 
  $$|\Phi|^2 \leq \max (2\sqrt{2}|\eta |- s,0).$$
  By bootstrapping and the Rellich-Kondrakov theorem, it follows that the space of solution  solutions of
  (\ref{gauged}-\ref{drca}-\ref{sda}) of regularity $L^q_{k+1}$, modulo the restricted gauge action of  $U(1) \rtimes H^1(M, \ZZ)$, is compact in the $L^q_k$ topology. This argument also applies to the space of all solutions as $(g,\eta)$ varies over some compact family of pairs $(g,\eta)$. 
  
\begin{lem} \label{swindex} 
Let $(M,J_0)$ be a orbifold-smooth compact $4$-manifold with almost-complex structure, let 
$\Sigma \subset M$ be a compact embedded pseudo-holomorphic curve. Choose an integer $p\geq 2$, and  equip
the orbifold $(M, \Sigma, \beta)$, $\beta = 1/p$, with the the spin$^c$-structure 
obtained by lifting $J_0$ to an almost-complex structure $J$. For some orbifold-smooth 
metric $g$, let $(\Phi, {\zap A})$ be an  $L^q_{k+1}$ solution of (\ref{gauged}-\ref{drca}-\ref{sda}) for some $q> 4$, $k \geq 0$, and let
$$\mbox{\cyr L} : L^q_{k+1}(\mathbb{V}_+\oplus \Lambda^1) \to L^q_{k}(\mathbb{V}_-\oplus \Lambda^+\oplus \Lambda^0)$$
be the linearization of  (\ref{gauged}-\ref{drca}-\ref{sda}) at $(\Phi, {\zap A})$. Then $\mbox{\cyr L}$ is a Fredholm operator 
of  index $0$.
\end{lem}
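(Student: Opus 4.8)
The plan is to compute the Fredholm index of $\mbox{\cyr L}$ by a standard deformation argument that reduces it to indices of operators whose index we already know. First I would observe that, since the index of a continuous family of Fredholm operators is constant, and since the lower-order (zeroth-order) terms of $\mbox{\cyr L}$ coming from the quadratic nonlinearities $\sigma(\Phi)$ and the Clifford action of $\Phi$ are compact perturbations relative to the $L^q_{k+1}\to L^q_k$ mapping, the index of $\mbox{\cyr L}$ equals the index of its principal part. That principal part is the direct sum of the spin$^c$ Dirac operator $\drc_{\zap A}:\mathcal{E}(\mathbb{V}_+)\to\mathcal{E}(\mathbb{V}_-)$ and the operator $d^*\oplus d^+:\mathcal{E}(\Lambda^1)\to\mathcal{E}(\Lambda^0)\oplus\mathcal{E}(\Lambda^+)$ obtained by linearizing the curvature equation together with the gauge-fixing condition \eqref{gauged}. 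Both are orbifold elliptic operators, so each is Fredholm, and $\operatorname{ind}\mbox{\cyr L}=\operatorname{ind}_\RR\drc_{\zap A}+\operatorname{ind}(d^*\oplus d^+)$.

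Next I would evaluate the two pieces. For the Dirac piece, Proposition \ref{toddgenus} gives $\Ind_\CC(\drc_{\zap A})=(\chi+\tau)(M)/4$, so the real index is $\operatorname{ind}_\RR\drc_{\zap A}=(\chi+\tau)(M)/2$. For the operator $d^*\oplus d^+$ on $1$-forms, the orbifold Hodge theory developed in \S\ref{regal}--its subsection on DeRham Cohomology, together with the decomposition \eqref{codeco}, identifies its kernel with $\mathcal{H}^1(X,g)\cong H^1(M,\RR)$ and its cokernel with $\mathcal{H}^0(X,g)\oplus\mathcal{H}^+_g\cong H^0(M,\RR)\oplus\mathcal{H}^+_g$; hence its index is $b_1(M)-1-b_+(M)$. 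Adding,
\[
\operatorname{ind}\mbox{\cyr L}=\frac{(\chi+\tau)(M)}{2}-\big(1-b_1(M)+b_+(M)\big).
\]
Since $\chi(M)=2-2b_1(M)+b_+(M)+b_-(M)$ and $\tau(M)=b_+(M)-b_-(M)$, we get $(\chi+\tau)(M)/2=1-b_1(M)+b_+(M)$, and the two contributions cancel, giving $\operatorname{ind}\mbox{\cyr L}=0$.

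The main obstacle is justifying that one may carry out these computations on the orbifold $(M,\Sigma,\beta)$ exactly as on a smooth manifold: one must check that the relevant Sobolev spaces $L^q_{k+1}$, $L^q_k$ of $V$-bundle sections are well-defined (which the bound on the orders $|\Gamma_\mathfrak{J}|$ guarantees), that the Rellich--Kondrakov embedding and elliptic estimates hold in the orbifold setting so that the zeroth-order terms really are compact and the principal-part operators really are Fredholm, and that Hodge theory for $d^*\oplus d^+$ on orbifold $1$-forms produces the stated kernel and cokernel. All of these are available from the orbifold machinery already assembled in \S\ref{regal} and its subsections (orbifold Hodge theorem, Poincaré lemma, $V$-bundle sections, orbifold elliptic theory à la Kawasaki), so the proof is essentially a bookkeeping exercise once those foundations are invoked. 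A secondary point to handle with care is the choice $q>4$, $k\geq 0$, which ensures $L^q_{k+1}\hookrightarrow C^0$ so that the pointwise quadratic terms define bounded operators between the relevant spaces; this is where the hypotheses on $q$ and $k$ in the statement get used.
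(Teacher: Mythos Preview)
Your proposal is correct and follows essentially the same approach as the paper: you reduce $\mbox{\cyr L}$ to its principal part $\drc_{\zap A}\oplus (d^+\oplus d^*)$ by observing that lower-order terms do not affect the index, invoke Proposition~\ref{toddgenus} for the Dirac piece, compute the kernel and cokernel of $d^+\oplus d^*$ via orbifold Hodge theory as $\mathcal{H}^1$ and $\mathcal{H}^+\oplus\mathcal{H}^0$, and then check the resulting arithmetic cancellation. The paper's proof is the same computation, stated more tersely and without your additional remarks on Sobolev spaces and the role of the hypothesis $q>4$.
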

\begin{proof}
The linearization $\mbox{\cyr L}$ differs from the  elliptic operator $\drc_{\zap A} \oplus d^+ \oplus d^*$ by lower order terms,
and so  has the same index. 
However, the kernel of 
$$d^+ \oplus d^* : L^q_{k+1} (\Lambda^1) \to L^q_{k}(\Lambda^+ \oplus \Lambda^0)$$ is the space of  harmonic 
$1$-forms $\mathcal{H}^1$, while its  cokernel is $\mathcal{H}^+ \oplus \mathcal{H}^0$, the space of self-dual harmonic $2$-forms direct sum
the constants.   The index of $\mbox{\cyr L}$ is therefore 
\begin{eqnarray*}
\Ind_\RR  (\mbox{\cyr L}) &=& 2 \Ind_\CC (\drc_{\zap A}) + (b_1 - b_+- b_0)(M) \\&=& 2\frac{(\chi + \tau ) (M)}{4} - \frac{(\chi + \tau ) (M)}{2} \\&=&0,
\end{eqnarray*}
using the value for  the index of $\drc_{\zap A}$  found in Proposition \ref{toddgenus}. 
\end{proof}

This index calculation gives rise to an  immediate generalization of the familiar definition of the Seiberg-Witten invariant 
of $4$-manifolds to the present orbifold context. 
The space of  solutions of (\ref{gauged}-\ref{drca}) with $L^q_{k+1}$ regularity and $\Phi\not\equiv 0$ is a Banach manifold $\mathcal{B}^q_{k+1}$, 
because, letting $L^q_k(0)\subset L^q_k$ be  the codimension-$1$ subspace of functions of integral $0$, because  (\ref{gauged}-\ref{drca}) 
may be interpreted as defining this  $\mathcal{B}^q_{k+1}$ to be the set of 
  $(\Phi, {\zap A})$ 
which are sent to 
$(\mathbf{0}, \mathbf{0})$  by a smooth map $[L^q_{k+1}(\mathbb{V}_+) -\mathbf{0}]\times L^q_{k+1} (\Lambda^1)\to L^q_{k}(\mathbb{V}_-) \times L^q_k(0)$
for which $(\mathbf{0}, \mathbf{0})$ is a regular value. Equation \eqref{sda} then defines a smooth  Fredholm map $\mathcal{B}^q_{k+1}\to L^q_k (\Lambda^+)$
which, by Lemma \ref{swindex}, has index $1$. For regular values $\eta \in L^q_k (\Lambda^+)$ of this map which are also good, 
we therefore conclude that the solutions of  (\ref{gauged}-\ref{drca}-\ref{sda}) form a $1$-manifold. Moreover, 
$U(1) \rtimes H^1(M, \ZZ)$ acts freely on this $1$-manifold, and the {\em moduli space} $\mathfrak{M}(g, \eta)$, obtained by dividing the space of solutions by 
the free action  of $U(1) \rtimes H^1(M, \ZZ)$,  is a $0$-manifold. Compactness therefore implies that the moduli space $\mathfrak{M}(g, \eta)$ is finite. 
Moreover, one obtains the following:

\begin{prop}
Let $(M,J_0)$ be a smooth compact $4$-manifold with almost-complex structure, let 
$\Sigma \subset M$ be a compact embedded pseudo-holomorphic curve. Choose some positive integer $p\geq 2$, and  equip
the orbifold $(M, \Sigma, \beta)$, $\beta = 1/p$, with the the spin$^c$-structure 
obtained by lifting $J_0$ to an almost-complex structure $J$. Then, for any metric $g$, 
and for a residual  set of good $\eta\in L^q_k(\Lambda^+)$  the Seiberg-Witten 
moduli space $\mathfrak{M}(g, \eta)$ is finite. If $b_+(M) > 1$, then, for the fixed spin$^c$ structure,
 any two such moduli spaces
$\mathfrak{M}(g, \eta)$ and $\mathfrak{M}(\tilde{g}, \tilde{\eta})$ 
are cobordant. If $b_+(M) = 1$, two such moduli spaces
$\mathfrak{M}(g, \eta)$ and $\mathfrak{M}(\tilde{g}, \tilde{\eta})$ are cobordant if, in addition, 
the good pairs 
$(g, \eta)$ and $(\tilde{g}, \tilde{\eta})$ belong to the same chamber.
\end{prop}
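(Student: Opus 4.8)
The plan is to run the standard Sard--Smale transversality argument for the Seiberg--Witten equations, checking that each ingredient survives the passage from smooth manifolds to the orbifold $(M,\Sigma,\beta)$. Fix $q>4$ and $k\geq 0$. Since $\mathcal{B}^q_{k+1}$ is already known to be a Banach manifold and equation \eqref{sda}, viewed with $\eta$ as an additional variable, is manifestly submersive in the $\eta$ direction, the universal solution space
$$\widehat{\mathfrak{M}}=\big\{(\Phi,{\zap A},\eta)\in \mathcal{B}^q_{k+1}\times L^q_k(\Lambda^+)\ :\ i F_{\zap A}^++\sigma(\Phi)=\eta\big\}\big/\big(U(1)\rtimes H^1(M,\ZZ)\big)$$
is again a Banach manifold, and the projection $\Pi:\widehat{\mathfrak{M}}\to L^q_k(\Lambda^+)$ is a smooth Fredholm map of index $1$ by Lemma \ref{swindex}. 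The Sard--Smale theorem then produces a residual set of regular values of $\Pi$; intersecting it with the complement of the affine ``wall'' $\{\eta:\eta^H=2\pi[\corb(L)]^+\}$, which has codimension $b_+(M)$ and hence dense complement, yields a residual set of good regular perturbations. For such $\eta$, goodness forces $\Phi\not\equiv 0$, so $U(1)\rtimes H^1(M,\ZZ)$ acts freely, $\mathfrak{M}(g,\eta)=\Pi^{-1}(\eta)$ is a smooth $0$-manifold, and it is finite by the compactness already deduced from the Weitzenb\"ock estimate \eqref{petweit} and bootstrapping.

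For the cobordism assertions, join the two good pairs $(g_0,\eta_0)$ and $(g_1,\eta_1)$ by a path $(g_t,\eta_t)$, $t\in[0,1]$. The key point is that such a path may be chosen to consist of \emph{good} pairs exactly when its endpoints lie in the same connected component of the space of good pairs. When $b_+(M)>1$, the non-good locus has real codimension $b_+(M)\geq 2$ in the contractible space of all pairs, so the space of good pairs is connected and no further hypothesis is needed; when $b_+(M)=1$ this locus is a codimension-one wall separating that space into the two chambers $\uparrow$ and $\downarrow$ described above, and the desired path of good pairs exists precisely when $(g_0,\eta_0)$ and $(g_1,\eta_1)$ lie in the same chamber. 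Applying the parametrized Sard--Smale argument to the family $\Pi_t$ along a generic such path, one obtains that $\bigcup_{t\in[0,1]}\mathfrak{M}(g_t,\eta_t)$ is a compact $1$-manifold with boundary $\mathfrak{M}(g_0,\eta_0)\sqcup\mathfrak{M}(g_1,\eta_1)$, the compactness again coming from the uniform $C^0$ bound on $\Phi$ over a compact family of pairs. This is exactly the asserted cobordism, and it yields a well-defined count of the moduli space within each chamber.

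The main obstacle, as always, is the transversality step: to know that the universal and one-parameter moduli spaces are genuine Banach manifolds, one must check that the linearized equations, with $\eta$ (and $t$) allowed to vary, have surjective linearization at every irreducible solution. Variation of $\eta$ handles the self-dual curvature equation directly, and surjectivity onto the Dirac-equation component is then the classical argument combining the formal adjoint's unique-continuation property with the fact that a nonzero $\Phi$ vanishes only on a nowhere-dense set; since all of this is local on the regular set $M-\Sigma$, it transfers verbatim to the orbifold. The remaining genuinely orbifold-flavored ingredients --- elliptic regularity and Fredholm theory on $(M,\Sigma,\beta)$, the Rellich--Kondrakov theorem, the index computation of Lemma \ref{swindex}, and the identification of the gauge group with $U(1)\rtimes H^1(M,\ZZ)$ (unchanged because $(M,\Sigma,\beta)$ and $M$ are homeomorphic) --- have all already been put in place in the preceding discussion, so the familiar manifold proof applies with only cosmetic modifications.
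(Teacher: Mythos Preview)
Your proposal is correct and follows essentially the same approach as the paper. The paper's own treatment is the brief sketch in the paragraph immediately preceding the proposition: it observes that $\mathcal{B}^q_{k+1}$ is a Banach manifold, that \eqref{sda} defines a Fredholm map $\mathcal{B}^q_{k+1}\to L^q_k(\Lambda^+)$ of index~$1$ by Lemma~\ref{swindex}, and then invokes Sard--Smale and compactness; the cobordism statement is simply asserted as ``Moreover, one obtains the following,'' leaving the standard parametrized argument implicit. You have supplied considerably more detail---the universal moduli space formulation, the explicit chamber discussion, and the transversality verification---but the underlying strategy is identical.
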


If $\mathfrak{c}$ denotes the spin$^c$ structure on the oriented orbifold $(M,\Sigma, \beta)$ induced by $J$,
and if $b_+(M) > 1$, the cobordism-invariance of the moduli space   allows us to 
  define the mod-$2$ Seiberg-Witten invariant of   $(M,\Sigma, \beta)$ 
by setting 
$$
SW_{\mathfrak{c}}(M,\Sigma, \beta) = \mbox{ Cardinality} (\mathfrak{M}(g, \eta)) \bmod 2
$$
for any good pair $(g,\eta )$ for which $\eta$ is a regular value of the relevant map. 
When $b_+(M) = 1$, this definition instead gives rise to  two $\mathbb{Z}_2$-valued invariants, $SW^\uparrow_{\mathfrak{c}}(M,\Sigma, \beta)$ and  
 $SW^\downarrow_{\mathfrak{c}}(M,\Sigma, \beta)$, where the arrow indicates whether the generic good pair$(g,\eta)$ at which we carry out 
 our mod-$2$ count belongs to the $\uparrow$ or the $\downarrow$ component of the space of good pairs. 
 
 If $b_+(M)> 1$ and $SW_{\mathfrak{c}}(M,\Sigma, \beta) \neq 0$, then the Seiberg-Witten equations (\ref{drc}-\ref{sd}) must
 have a solution for any orbifold metric $g$ on $(M,\Sigma, \beta)$.  Indeed, if $[\corb(L)]^+\neq 0$ for the given metric $g$, then $\eta=0$
 is a good perturbation; if there were no solution, it would also be a regular value, and we would obtain 
 a contradiction by performing a mod-$2$ count of solutions and concluding that $SW_{\mathfrak{c}}=0$.
 If  $[c_1(L)]^+= 0$, we still have a solution, albeit a reducible one, obtained by setting $\Phi\equiv 0$
 and arranging for $F_{\zap A}$ to be harmonic. Note that elliptic regularity actually guarantees
 that any solution $(\Phi, {\zap A})$ of (\ref{drc}-\ref{sd}-\ref{gauged}) is actually orbifold-smooth,
 assuming that the  metric $g$ is  orbifold-smooth, too.

 If $b_+(M)> 1$ and $SW^\downarrow_{\mathfrak{c}}(M,\Sigma, \beta) \neq 0$, the same argument shows that
  the Seiberg-Witten equations (\ref{drc}-\ref{sd}) must
 have a solution for any orbifold metric $g$ on $(M,\Sigma, \beta)$ such that  $[\corb (L)]^+$ is past-pointing. 
 Similarly, if $SW^\uparrow_{\mathfrak{c}}(M,\Sigma, \beta) \neq 0$, 
  the Seiberg-Witten equations (\ref{drc}-\ref{sd}) must
 have a solution for any orbifold metric $g$ on $(M,\Sigma, \beta)$ such that  $[\corb (L)]^+$ is future-pointing. 
 Here, it is worth recalling once again that, in the case that concerns us here,
 $$
 \corb (L) = c_1(M,J) + (\beta -1) [\Sigma]
 $$
 where $\beta = 1/p$.

 Of course, all this would be completely useless without some geometric criterion  to sometimes guarantee that 
 our  orbifold version of the  Seiberg-Witten invariant is  actually non-zero. Fortunately, such a criterion is implicit in 
 the work of  Taubes \cite{taubes}:
 
 \begin{thm} \label{crux} 
 Suppose the $4$-manifold $M$ admits a symplectic form $\omega_0$ which restricts to $\Sigma\subset M$
 as an area form. Choose an almost-complex structure $J_0$ which is compatible with $\omega_0$
 and which makes $\Sigma$  a pseudo-holomorphic curve.   Let $J$ be the  lift of $J_0$ to  $(M, \Sigma, \beta)$, 
 and let $\mathfrak{c}$ be the spin$^c$ structure on $(M, \Sigma, \beta)$ induced by $J$ as above. The following then hold:
 \begin{itemize}
 \item
 If $b_+(M) > 1$, then $SW_{\mathfrak{c}}(M,\Sigma, \beta) \neq 0$.
 \item
 If $b_+(M) = 1$, then $SW^\downarrow_{\mathfrak{c}}(M,\Sigma, \beta) \neq 0$, where 
  $H^2(M, \RR)$ has been time-oriented so that  $[\omega_0 ]$ is future-pointing.
  \end{itemize}
 \end{thm}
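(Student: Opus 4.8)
The plan is to transcribe Taubes' nonvanishing argument \cite{taubes} into the orbifold category, using the machinery assembled above. First I would put an almost-K\"ahler structure on $(M,\Sigma,\beta)$: fix the orbifold symplectic form $\omega$ with $[\omega]=[\omega_0]$ produced above, together with an orbifold Riemannian metric $g$ for which $J$ is $g$-orthogonal and $\omega=g(J\cdot,\cdot)$, so that $(M,\Sigma,\beta,g,\omega)$ becomes an almost-K\"ahler $4$-orbifold. For the spin$^c$ structure $\mathfrak{c}$ with twisted spinor bundles $\mathbb{V}_\pm$ as above, the summand $\Lambda^{0,0}\subset\mathbb{V}_+$ is canonically trivialized, so there is a distinguished section $\Phi_0=(1,0)$ of $\mathbb{V}_+$ of constant length; and on $L=K^{-1}$ there is the distinguished Blair connection ${\zap B}$ of \eqref{bla1}--\eqref{bla2}, which, as recalled above, is precisely the connection used by Taubes, and which has the feature that $\drc_{\zap B}\Phi_0$ vanishes identically when $g$ is K\"ahler and in general is pointwise bounded by the covariant derivative of $\omega$.

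Next I would solve the perturbed Seiberg--Witten equations \eqref{drca}--\eqref{sda} with the perturbation $\eta$ taken to be $r\,\omega$ plus a fixed correction term, for a large real parameter $r>0$, exactly as in \cite{taubes}. Taubes' a priori analysis, whose only ingredients are the Weitzenb\"ock identity \eqref{petweit}, the resulting bound $|\Phi|^2\le\max(2\sqrt{2}|\eta|-s,0)$, and the subsequent elliptic bootstrapping and concentration estimates, shows that once $r$ is sufficiently large every solution of \eqref{gauged}--\eqref{drca}--\eqref{sda} is gauge-equivalent to one of the form $(\Phi,{\zap A})=(\sqrt{r}\,\Phi_0+O(1),\ {\zap B}+o(1))$, that such a solution exists and is unique up to gauge, and that the linearization $\mbox{\cyr L}$ of \eqref{gauged}--\eqref{drca}--\eqref{sda} at it is an isomorphism. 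Since $\mbox{\cyr L}$ is Fredholm of index $0$ by Lemma \ref{swindex}, this non-degenerate solution is isolated and contributes $1$ to the mod-$2$ count, so $SW_{\mathfrak{c}}(M,\Sigma,\beta)=1\neq 0$ whenever $b_+(M)>1$; note that only the mod-$2$ count is needed here, so no orientation of the moduli space is required.

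The step demanding real care, and the main obstacle, is verifying that Taubes' argument --- written for smooth $4$-manifolds --- survives passage to the orbifold, especially near the singular set $\Sigma$. The point is that his estimates are of two types. The pointwise identities, the $C^0$ and higher $C^k$ bounds, and the large-$r$ concentration and uniqueness arguments are all local, carried out in local uniformizing charts; near a point of $\Sigma$ such a chart is a ball in $\RR^4=\CC^2$ on which $g$, $J$, $\omega$, ${\zap B}$ and $\Phi_0$ are all invariant under the linear $\ZZ_p$-action, so all of Taubes' local constructions there are automatically $\ZZ_p$-equivariant and descend to $(M,\Sigma,\beta)$. The genuinely global inputs --- Fredholmness, the index, elliptic regularity, the Sobolev and Rellich--Kondrakov theorems, and Sard--Smale transversality for the perturbation $\eta$ --- are all standard for compact orbifolds, with the crucial index values $\Ind_\CC(\drc_{\zap A})=(\chi+\tau)(M)/4$ and $\Ind_\RR(\mbox{\cyr L})=0$ supplied by Proposition \ref{toddgenus} and Lemma \ref{swindex} via Kawasaki's orbifold index theorem \cite{kawasaki}. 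With these observations in hand, Taubes' proof goes through essentially verbatim.

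It remains to identify, when $b_+(M)=1$, the chamber in which the above mod-$2$ count lives. Since $g$ is almost-K\"ahler, $\omega$ is a self-dual harmonic $2$-form, so the harmonic projection $\eta^H$ is asymptotic to a fixed positive multiple of $[\omega]=[\omega_0]$ as $r\to+\infty$; hence for $r$ large the time-like class $2\pi[\corb(L)]^+-\eta^H$ is a large negative multiple of $[\omega_0]$, which is past-pointing for the time-orientation in which $[\omega_0]$ is future-pointing. By definition this means $(g,\eta)$ lies in the $\downarrow$ chamber, so the count we have computed equals $SW^\downarrow_{\mathfrak{c}}(M,\Sigma,\beta)$, and therefore $SW^\downarrow_{\mathfrak{c}}(M,\Sigma,\beta)=1\neq 0$, as claimed.
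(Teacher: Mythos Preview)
Your proposal is correct and follows essentially the same route as the paper: equip $(M,\Sigma,\beta)$ with an almost-K\"ahler structure, use the Blair connection ${\zap A}_0$ and the section $\Phi_0=(\sqrt{t},0)$ as an explicit solution of the perturbed equations for $\eta=\tfrac{t}{4}\omega+iF^+_{{\zap A}_0}$, invoke Taubes' large-$t$ uniqueness and transversality (which are local and hence valid in uniformizing charts), and then identify the chamber via $[\omega]\cdot(2\pi[\corb(L)]^+-\eta^H)=-\tfrac{t}{4}[\omega]^2<0$. One small correction: for the Blair connection one actually has $\drc_{{\zap A}_0}\Phi_0=0$ \emph{exactly} in the almost-K\"ahler case (not merely bounded by $|\nabla\omega|$), which is precisely why the paper can write down the solution in closed form rather than perturb to it.
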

 \begin{proof} Choose an orbifold metric $g$ on $(M,\Sigma, \beta)$ which is adapted to the lift $\omega$ of $\omega_0$ to $(M, \Sigma, \beta)$. 
 Let $t\gg  0$ be a large positive real constant, and let ${\zap A}_0$ be the Blair connection \cite{blair} on $L=\Lambda^{2,0}_J$. 
 Then  $\Phi_0 = (\sqrt{t}, 0)\in \mathcal{E}^{0,0}\oplus \mathcal{E}^{0,2}$ solves \cite{lsymp,taubes} 
 $$\drc_{{\zap A}_0}\Phi =0.$$
 Hence $(\Phi_0, {\zap A}_0)$ 
 can be viewed as a solution of the perturbed Seiberg-Witten equations (\ref{drca}-\ref{sda}) for an appropriate choice
 of perturbation $\eta$,
 namely the self-dual form given, in the notation of \eqref{bla1},  by 
 $$
 \eta = \frac{t}{4}\omega + iF^+_{{\zap A}_0}  = \left(2t+ s+s^* \right) \frac{\omega}{8} + W_+(\omega)^\perp.
 $$
If $t$ is sufficiently large,  Taubes \cite{taubes} then shows that, for this choice of $\eta$,  {\em any} solution
$(\Phi , {\zap A})$ of (\ref{drca}-\ref{sda}) of the system is then gauge equivalent to $(\Phi_0 , {\zap A}_0)$, 
and that 
$\eta$ is moreover a regular value of the Seiberg-Witten map.
(While Taubes assumes that  he is working on a manifold rather than an orbifold, all of his arguments  go through in the present 
context without change.) Since $\mathfrak{M}(g,\eta)$ consists of a single point, our mod-$2$ count therefore gives a non-zero answer. 
When $b_+(M) > 1$, this shows that $SW_{\mathfrak{c}}(M,\Sigma, \beta) \neq 0$. When  $b_+(M) = 1$, it instead shows that the
mod-$2$ Seiberg-Witten invariant is non-zero for some chamber, and since $[\omega ]\cdot (2\pi [\corb (L)]^+ - \eta) = - t[\omega ]^2/4 < 0$,
the relevant chamber is the past-pointing one relative to $[\omega]=[\omega_0]$.
 \end{proof}

\section{Curvature Estimates}

The Seiberg-Witten  equations (\ref{drc}-\ref{sd}) imply the {Weitzenb\"ock formula}
 \begin{equation}
 0=	2\Delta |\Phi|^2 + 4|\nabla\Phi|^2 +s|\Phi|^2 + |\Phi|^4 ,	
 	\label{wnbk}
 \end{equation}
where $\nabla = \nabla_{\zap A}$.
This leads to  some rather surprising curvature estimates;  cf. \cite{witten,lpm,lno,lric,lsymp}. 
We begin by proving some estimates in terms of the self-dual part $\corb (L)^+\in \mathcal{H}^+_g$ of $\corb (L)$.

\begin{lem}\label{best}
Let $(M, J_0)$ be an almost-complex $4$-manifold, and let $\Sigma\subset M$ be a compact embedded
pseudo-holomorphic curve. Choose some integer $p\geq 2$, and let $(M,\Sigma, \beta)$, $\beta = 1/p$, be the orbifold 
obtained by declaring the total angle around $\Sigma$ to be $2\pi\beta$.  Let $\mathfrak{c}$ be the
spin$^c$ structure induced by the lifting $J_0$ to an almost-complex structure $J$ on $(M,\Sigma, \beta)$.
\begin{itemize}
\item
 If $b_+(M) > 1$, and if $SW_{\mathfrak{c}}(M,\Sigma,\beta) \neq 0$, then the scalar curvature $s_g$ of any orbifold metric $g$ on $(M,\Sigma, \beta)$ satisfies
 \begin{equation}
\label{scalar}
\int_{M}s_{g}^{2}d\mu_{g} \geq 32\pi^{2} [\corb(L)^{+}]^{2} ,
\end{equation}
with equality if and only if $g$ is an orbifold K\"ahler metric of constant negative scalar curvature which is 
compatible with a complex structure on  $(M,\Sigma, \beta)$ with $\corb = \corb (L)$ and with 
$(\corb )^+$ a negative multiple of the K\"ahler class $[\omega ]$. 
\item
If $b_+(M)=1$, and if $SW^\downarrow_{\mathfrak{c}}(M,\Sigma,\beta)  \neq 0$
for a fixed time orientation for $H^2(M,\RR)$,
then the same conclusion holds for any $g$ such that  $\corb(L)^{+}$ is past-pointing. 
\end{itemize}
\end{lem}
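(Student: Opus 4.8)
The plan is to run the standard Seiberg--Witten scalar-curvature estimate of \cite{lsymp,lric}, now entirely within the orbifold framework built above. First I would use the nonvanishing hypothesis to produce a solution: since $SW_{\mathfrak c}(M,\Sigma,\beta)\neq 0$ (respectively, $SW^{\downarrow}_{\mathfrak c}(M,\Sigma,\beta)\neq 0$, in which case the assumption that $\corb(L)^+$ is past-pointing places $\eta=0$ in the $\downarrow$ chamber), the discussion preceding Theorem \ref{crux} guarantees that for the given orbifold metric $g$ the unperturbed equations \eqref{drc}--\eqref{sd} admit an orbifold-smooth solution $(\Phi,{\zap A})$; and when $\corb(L)^+\neq 0$ this solution is moreover irreducible. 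Integrating the Weitzenb\"ock formula \eqref{wnbk} over $M$ annihilates the $\Delta|\Phi|^2$ term, so that
$$\int_M |\Phi|^4\, d\mu + 4\int_M |\nabla_{\zap A}\Phi|^2\, d\mu = -\int_M s\,|\Phi|^2\, d\mu \;\le\; \Big(\int_M s^2\, d\mu\Big)^{1/2}\Big(\int_M |\Phi|^4\, d\mu\Big)^{1/2}$$
by Cauchy--Schwarz, and in particular $\int_M |\Phi|^4\, d\mu \le \int_M s^2\, d\mu$.

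Next I would feed in the curvature equation. From \eqref{sd} and $|\sigma(\Phi)| = \tfrac{1}{2\sqrt2}|\Phi|^2$ one has $|F_{\zap A}^+|^2 = \tfrac18|\Phi|^4$ pointwise; and since $\tfrac{i}{2\pi}F_{\zap A}$ is a closed orbifold $2$-form whose de Rham class is $\corb(L)$, Lemma \ref{compare} yields $\tfrac{1}{4\pi^2}\int_M |F_{\zap A}^+|^2\, d\mu \ge [\corb(L)^+]^2$, that is, $\int_M |\Phi|^4\, d\mu \ge 32\pi^2[\corb(L)^+]^2$. Chaining this with the previous bound gives \eqref{scalar}. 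The case $b_+(M)=1$ with $SW^{\downarrow}_{\mathfrak c}\neq 0$ is word-for-word the same, the only new point being the chamber bookkeeping just noted.

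For the equality clause I would trace equality back through each inequality. Equality in Cauchy--Schwarz forces $s=-c\,|\Phi|^2$ for a constant $c\ge 0$; the vanishing of the gradient term forces $\nabla_{\zap A}\Phi\equiv 0$; and equality in Lemma \ref{compare} forces $\tfrac{i}{2\pi}F_{\zap A}$, hence its self-dual part, to be harmonic. Since $\Phi$ is $\nabla_{\zap A}$-parallel and, by irreducibility (available because $\corb(L)^+\neq 0$ in the equality case), nowhere zero, $|\Phi|$ is a positive constant, whence $c>0$ and $s$ is a negative constant; moreover the suitably normalized $\sigma(\Phi)$ is then a parallel self-dual $2$-form $\omega$ of constant norm $\sqrt2$, so $g$ is K\"ahler with respect to the integrable orbifold complex structure $J$ it determines, $\mathfrak c$ is the canonical spin$^c$ structure of $J$ (so that $\corb$ of $(M,\Sigma,\beta,J)$ equals $\corb(L)$), and $\tfrac{i}{2\pi}F_{\zap A}^+$ is a negative constant multiple of $\omega$ because $s<0$. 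Conversely, if $g$ is a constant-negative-scalar-curvature orbifold K\"ahler metric compatible with a complex structure $J$ on $(M,\Sigma,\beta)$ with $\corb(J)=\corb(L)$ and $(\corb)^+$ a negative multiple of $[\omega]$, then the K\"ahler identities give $\rho^+=\tfrac{s}{4}\omega$ for the Ricci form $\rho$, and since $2\pi\corb(L)=[\rho]$ a short computation gives $32\pi^2[\corb(L)^+]^2=\int_M s^2\,d\mu$; hence equality holds.

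Everything here is routine apart from the orbifold input, all of which has been prepared in the previous sections: orbifold elliptic regularity makes $(\Phi,{\zap A})$ orbifold-smooth, Lemma \ref{compare} supplies the cohomological lower bound, and the Weitzenb\"ock identity \eqref{wnbk} together with the unique-continuation fact (a nonzero $\nabla_{\zap A}$-parallel spinor vanishes nowhere) hold verbatim in the orbifold category. I expect the genuine work to lie in the equality analysis --- promoting the parallel self-dual $2$-form to an integrable orbifold complex structure for which $g$ is K\"ahler and $\mathfrak c$ is canonical, pinning down the sign and the identity $\corb(J)=\corb(L)$, and verifying the converse via the K\"ahler computation above.
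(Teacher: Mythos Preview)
Your argument for the inequality is essentially identical to the paper's, and your treatment of the equality case when $\corb(L)^+\neq 0$ is also the same in substance. The gap is in your parenthetical assertion that ``$\corb(L)^+\neq 0$ in the equality case.'' When $b_+(M)=1$ this is indeed automatic, since the hypothesis that $\corb(L)^+$ is past-pointing forces it to be nonzero. But when $b_+(M)>1$ there is nothing in your argument that rules out the borderline scenario $\corb(L)^+=0$, $s\equiv 0$: both sides of \eqref{scalar} would then vanish and equality would hold, yet $g$ certainly would not be K\"ahler with \emph{negative} constant scalar curvature. So the ``only if'' direction of the equality clause is not yet established.

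The paper closes this gap with a genuinely separate argument. Nonvanishing of $SW_{\mathfrak c}$ first rules out positive-scalar-curvature orbifold metrics (via the perturbed Weitzenb\"ock formula \eqref{petweit}); Bourguignon's trick then forces a scalar-flat $g$ to be Ricci-flat; the Gauss--Bonnet identity \eqref{urbi} for this Einstein metric gives $(\corb)^2\geq 0$, while $[\corb(L)^+]^2=0$ gives $(\corb)^2\leq 0$; hence $(\corb)^2=0$ and $g$ is also anti-self-dual. At this point Proposition~\ref{exclude} is invoked to conclude $b_+(M)=1$, contradicting $b_+(M)>1$. This chain of implications --- and in particular the appeal to Proposition~\ref{exclude}, itself a nontrivial structure theorem about scalar-flat anti-self-dual orbifolds --- is the missing idea in your proposal. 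Your final paragraph correctly anticipates that the equality analysis is where the work lies, but locates the difficulty in the wrong place: promoting the parallel $\sigma(\Phi)$ to a K\"ahler structure is routine; excluding the scalar-flat borderline is not.
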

\begin{proof} Our hypotheses guarantee that there is a solution $(\Phi, {\zap A})$ of  (\ref{drc}-\ref{sd})  for the given metric and spin$^c$ structure.
Integrating (\ref{wnbk}), we have
$$0= \int [ 4|\nabla \Phi |^2 + s|\Phi|^2 + |\Phi|^4 ] d\mu , $$
and it follows that 
$$\left(\int s^2 d\mu  \right)^{1/2}\left(\int |\Phi |^4 d\mu  \right)^{1/2} \geq \int (-s) |\Phi|^2 d\mu \geq \int |\Phi|^4 d\mu ,$$
so that 
$$
\int s^2 d\mu \geq \int |\Phi |^4 d\mu  = 8 \int |F_{\zap A}^+|^2 d\mu .
$$ 
 Since $iF_{\zap A}$ belongs to the deRham class $2\pi \corb (L)$, 
 we moreover have 
 $$\int |F_{\zap A}^+|^2 d\mu \geq 4\pi^2 [\corb(L)^+]^2$$
by Lemma \ref{compare}. Together, these two inequalities imply \eqref{scalar}.

If equality holds in \eqref{scalar}, the above argument shows that $\nabla \Phi \equiv 0$,  and that    $|\Phi|^2$ and $(-s)$ 
are  both non-negative constants.  If $\corb(L)^+\neq 0$, the constants $(-s)$ and  $|\Phi|^2$ must both be positive, and the parallel 
 section $\Phi$ of $\mathbb{V}_+$ is non-zero.  This implies that 
the self-dual $2$-form $\sigma (\Phi)$ is parallel and non-zero, so that   $g$ must K\"ahler. 
 This shows that $g$ is a constant-scalar-curvature
K\"ahler metric with $s< 0$. Moreover, $\Phi\otimes \Phi$ is a non-zero section of $\Lambda^{2,0}\otimes L$, so this K\"ahler metric
has $\corb = \corb (L)$. 

It only remains to see what happens when  $b_+(M) > 1$, $SW_{\mathfrak{c}}(M,\Sigma,\beta) \neq 0$, and both sides of \eqref{scalar}
vanish. In this case, the non-vanishing of the Seiberg-Witten invariant implies that there cannot be a positive-scalar-curvature orbifold metric
on $(M,\Sigma,\beta)$, 
because  otherwise there would exist solutions of  the perturbed Seiberg-Witten equations (\ref{drca}-\ref{sda}) for a 
good perturbation $\eta$ with $s-2\sqrt{2}|\eta | > 0$, leading to a contradiction via the perturbed Weitzenb\"ock formula \eqref{petweit}. 
On the other hand, we have assumed that both sides of \eqref{scalar}
vanish, so $g$ is scalar flat; and if it were not Ricci-flat, there  would be a nearby  metric of positive scalar curvature \cite{bes,bouric}.
Thus $g$ must be Ricci-flat, and so, in particular, Einstein. By \eqref{urbi},  this implies that $[\corb (L)]^2\geq 0$, with equality iff $g$ is scalar-flat and anti-self-dual.  
But since we have assumed that both sides of \eqref{scalar}
vanish, we have $[\corb (L)^+]^2=0$, so that  
$$
 [\corb (L)]^2 =[\corb (L)^+]^2-|[\corb (L)^-]^2| \leq 0.
$$
Hence $[\corb (L)]^2= 0$, and our scalar-flat metric  $g$ is   also anti-self-dual. 
 But  this implies $b_+(M)=1$ by Proposition \ref{exclude}. Thus the case under discussion never occurs, and we are done. 
\end{proof}

We will also need an  analogous curvature estimate for a mixture of the scalar and Weyl curvatures.
To  this end, let $f > 0$ be a postive orbifold-smooth  function, and 
consider the {\em rescaled Seiberg-Witten equations} \cite{lebsurv2} 
\begin{eqnarray} D_{A}\Phi &=&0\label{rsdrc}\\
 F_{A}^+&=&if\sigma (\Phi) .\label{rssd}\end{eqnarray}
 which, for reasons related to the conformal invariance of the  Dirac operator \cite{hitharm,lawmic,pr2},   are simply a disguised form of the Seiberg-Witten 
 equations for the conformally related metric $f^{-2}g$. When $b_+(M) > 1$, the non-vanishing of $SW_\mathfrak{c}(M)$ is therefore enough 
 to guarantee that (\ref{rsdrc}--\ref{rssd})  have a solution for any $g$ and any $f$. Similarly, when $b_+(M)=1$, 
 the non-vanishing of  $SW_\mathfrak{c}^\downarrow(M)$ suffices to guarantee that, for any $f> 0$, (\ref{rsdrc}--\ref{rssd})  must have a solution 
 for any metric $g$ such that $\corb (L)^+$ is past-pointing. 
 Will now exploit  the fact that  (\ref{rsdrc}--\ref{rssd})  imply the Weitzenb\"ock formula 
  \begin{equation}
 0=	2\Delta |\Phi|^2 + 4|\nabla\Phi|^2 +s|\Phi|^2 + f|\Phi|^4 ,	
 	\label{rswnbk}
 \end{equation}
to obtain the desired curvature estimate.

 \begin{lem}\label{upscale}
 Let $(M,\Sigma, \beta)$ and $\mathfrak{c}$ be as in Lemma \ref{best}.
 \begin{itemize}
 \item 
  If $b_+(M) > 1$, and if $SW_{\mathfrak{c}}(M,\Sigma,\beta) \neq 0$, 
 then the scalar curvature 
$s_g$ and self-dual Weyl curvature $(W_{+})_g$ of any orbifold metric $g$ on 
$(M,\Sigma, \beta)$ 
satisfy
 $$\int_M (s-\sqrt{6}|W_+|)^2d\mu_g \geq 72\pi^2 [\corb(L)^{+}]^{2} ,
$$
 with equality if and only if  $g$  is a saturated 
 almost-K\"ahler  metric,  in the sense of Definition \ref{fatso},  with $\corb = \corb (L)$, and with 
  $(\corb)^+$  a negative multiple of the symplectic class $[\omega ]$. 
  \item
 If $b_+(M)=1$, and if $SW^\downarrow_{\mathfrak{c}}(M,\Sigma,\beta)  \neq 0$
for a fixed time orientation for $H^2(M,\RR)$,
then the same conclusion holds for any $g$ for which $\corb(L)^{+}$ is past-pointing. 
\end{itemize}
  \end{lem}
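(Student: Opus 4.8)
The plan is to argue in the same spirit as Lemma~\ref{best}, replacing the two crude steps used there --- Cauchy--Schwarz applied to the Weitzenb\"ock formula, and Lemma~\ref{compare} --- by the refined Kato inequality for Dirac-harmonic spinors together with the refined self-dual Weitzenb\"ock estimate \eqref{part}. Everything involved is a purely local computation, so the passage from the manifold case (for which this is essentially a result of \cite{lno,lric,lsymp}) to the orbifold $(M,\Sigma,\beta)$ requires nothing beyond the Hodge theory and elliptic regularity already developed above. Concretely, I would first observe that, exactly as in the paragraph preceding the statement, the hypotheses --- non-vanishing of $SW_{\mathfrak c}$ when $b_+(M)>1$, or of $SW^\downarrow_{\mathfrak c}$ with $\corb(L)^+$ past-pointing when $b_+(M)=1$ --- force the rescaled equations \eqref{rsdrc}--\eqref{rssd} to admit a solution $(\Phi,{\zap A})$ for \emph{every} orbifold metric $g$ and \emph{every} orbifold-smooth $f>0$, these being the ordinary Seiberg--Witten equations for the conformally rescaled metric $f^{-2}g$. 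Write $v=|\Phi|^2$ and let $\omega_\Phi$ be the self-dual $2$-form, of constant pointwise norm $\sqrt2$ on the dense set $\{\Phi\neq0\}$, determined by $\sigma(\Phi)=\tfrac14 v\,\omega_\Phi$; then $iF_{\zap A}^+=-\tfrac14 f v\,\omega_\Phi$ and $|F_{\zap A}^+|^2=\tfrac18 f^2 v^2$.

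The estimate is then assembled from three ingredients. First, integrating the rescaled Weitzenb\"ock formula \eqref{rswnbk} gives $\int_M\bigl(4|\nabla_{\zap A}\Phi|^2+s\,v+f\,v^2\bigr)\,d\mu=0$, and the refined Kato inequality for a solution of $D_{\zap A}\Phi=0$ on a $4$-manifold sharpens $|\nabla_{\zap A}\Phi|^2\geq|\nabla\!\sqrt v|^2$; splitting the covariant derivative relative to $\Phi=\sqrt v\,\hat\Phi$, one bounds $|\nabla_{\zap A}\Phi|^2$ below by $|\nabla\!\sqrt v|^2$ plus a fixed positive multiple of $v\,|\nabla\omega_\Phi|^2$. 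Second, the refined Weitzenb\"ock inequality \eqref{part}, applied to $\omega_\Phi$ and combined with $|W_+(\omega_\Phi,\omega_\Phi)|\leq\sqrt{\tfrac23}\,|W_+|\,|\omega_\Phi|^2$, controls $\int_M|\nabla\omega_\Phi|^2$ from below by a multiple of $\int_M(-s-\sqrt6\,|W_+|)\,d\mu$, and, after an integration by parts absorbing the weight $f$, controls the weighted integral produced in the first step. Third, since $iF_{\zap A}/2\pi$ represents $\corb(L)$, Lemma~\ref{compare} yields $\int_M|F_{\zap A}^+|^2\,d\mu\geq4\pi^2[\corb(L)^+]^2$. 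Feeding the second ingredient into the first, choosing $f$ so as to optimise the resulting inequality, and then applying Cauchy--Schwarz against the third, the numerical constants combine to the asserted $\int_M(s-\sqrt6\,|W_+|)^2\,d\mu\geq72\pi^2[\corb(L)^+]^2$.

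It remains to treat equality and the degenerate case. Equality in \eqref{part} applied to $\omega_\Phi$ forces $\omega_\Phi$ to be closed --- hence, being self-dual, harmonic --- to lie in the lowest eigenspace of $W_+$ pointwise, and to make the two largest eigenvalues of $W_+$ equal everywhere; thus $(M,g,\omega_\Phi)$ is almost-K\"ahler and satisfies the last two conditions of Definition~\ref{fatso}. Equality in the Kato/Cauchy--Schwarz step additionally forces $s+s^*$ to be a negative constant, completing the first condition, so $g$ is a saturated almost-K\"ahler metric with symplectic form $\omega=\omega_\Phi$. Equality in Lemma~\ref{compare} makes $F_{\zap A}^+$ harmonic; since $iF_{\zap A}^+=-\tfrac14 f v\,\omega$ and wedging with the symplectic form $\omega$ is injective on $1$-forms, $fv$ must be constant, so $\corb(L)^+=[\tfrac{i}{2\pi}F_{\zap A}^+]$ is a negative multiple of $[\omega]$; moreover $\Phi=\sqrt v\,\hat\Phi$ is then nowhere zero, which as in the manifold case forces $\corb=\corb(L)$. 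Conversely, any saturated almost-K\"ahler metric with these two properties makes each of the three inequalities an identity, so equality holds. Finally, the degenerate possibility $b_+(M)>1$ with both sides vanishing is excluded exactly as at the end of the proof of Lemma~\ref{best}: the perturbed Weitzenb\"ock formula \eqref{petweit}, applied to an irreducible solution for a small good perturbation, precludes orbifold metrics of positive scalar curvature, so the pointwise identity $s=\sqrt6\,|W_+|\geq0$ forces $s\equiv0$ and hence $W_+\equiv0$; thus $g$ is scalar-flat and anti-self-dual, and Proposition~\ref{exclude} yields $b_+(M)=1$, a contradiction.

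The main obstacle is the bookkeeping in the middle step: one must bring the refined Kato inequality and the refined self-dual Weitzenb\"ock \eqref{part} into a common form expressed through $|\nabla\omega_\Phi|^2$, integrate by parts to absorb the auxiliary weight $f$ without spoiling the constants, choose $f$ optimally, and then verify both that the resulting sharp constant is exactly $72\pi^2$ and that the equality conditions that emerge coincide on the nose with the three bullets of Definition~\ref{fatso}. Granting the corresponding computation in the manifold case (\cite{lric,lsymp}), this transplants to the orbifold setting without change.
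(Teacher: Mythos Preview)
Your overall strategy and the equality/degenerate-case analysis are on target, but the core step diverges from the paper's argument in a way that leaves a genuine gap. The paper does not split $\Phi=\sqrt{v}\,\hat\Phi$ and work with the normalized form $\omega_\Phi$; instead it multiplies the rescaled Weitzenb\"ock formula \eqref{rswnbk} by $|\Phi|^2$ \emph{before} integrating, which raises the homogeneity and produces a term $f|\Phi|^6$. One then works with $\psi=2\sqrt{2}\,\sigma(\Phi)$, which is globally orbifold-smooth and satisfies $|\psi|^2=|\Phi|^4$, $|\psi|^3=|\Phi|^6$, together with the elementary Kato-type bound $4|\Phi|^2|\nabla_{\zap A}\Phi|^2\geq|\nabla\psi|^2$. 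The estimate \eqref{part} is applied to $\psi$ directly---legitimate because $\psi$ is smooth everywhere---yielding
\[
0\geq \int_M\Big[(s-\sqrt6\,|W_+|)\,|\psi|^2+\tfrac32 f\,|\psi|^3\Big]\,d\mu .
\]
Repeated H\"older inequalities and the limiting choice $f_j\searrow |s-\sqrt6\,|W_+||^{1/2}$ then produce the constant $72\pi^2$ after invoking Lemma~\ref{compare}.

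Your route through $\omega_\Phi$ runs into two concrete problems. First, $\omega_\Phi$ is defined only on $\{\Phi\neq0\}$, so \eqref{part} cannot be applied to it as written; you would need a weighted version on an open set whose complement may be nontrivial, and the ``integration by parts absorbing the weight $f$'' is neither standard nor spelled out. Second, without the preliminary multiplication by $|\Phi|^2$, your integrated Weitzenb\"ock identity has only quadratic terms in $v$, and it is unclear how the cubic structure required for the H\"older step---and hence the exact constant $72\pi^2$---would emerge. Your appeal to \cite{lric,lsymp} does not close this gap: those papers use precisely the $\psi$-based computation above, not a refined-Kato/$\omega_\Phi$ argument, so ``the corresponding computation in the manifold case'' you invoke is the one you have not reproduced. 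Your equality discussion is essentially correct in spirit, but note that in the paper it is the H\"older step that first forces $|\psi|$ to be a nonzero constant; only then is $\omega=\sqrt{2}\,\psi/|\psi|$ globally defined, after which the saturated almost-K\"ahler conditions follow.
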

 \begin{proof} 
For any   smooth  function $f> 0$ on $M$, our hypotheses guarantee that 
 (\ref{rsdrc}--\ref{rssd}) must 
admits a solution  $(\Phi ,  A)$. We now proceed, {\em mutatis mutandis}, as in \cite{lebeta}. 
Multiplying \eqref{rswnbk} by $|\Phi |^2$ and integrating, 
this solution must then  satisfy 
$$0\geq \int_M \left( 4 |\Phi |^2 |\nabla_{\zap A}\Phi|^2 +s|\Phi|^4 + f|\Phi|^6\right) d\mu~.$$
On the other hand, the self-dual $2$-form $\psi = 2\sqrt{2} \sigma (\Phi )$
automatically satisfies 
$$
|\Phi |^{4} =	|\psi |^{2}   ,\qquad 
	   4 |\Phi |^{2}|\nabla_{\zap A} \Phi |^{2}  \geq  |\nabla \psi |^{2}  , 
$$
so it follows that 
 $$0\geq \int_M\left(|\nabla\psi  |^{2} + s|\psi |^{2}
 + f|\psi |^{3}\right) d\mu . $$
However, inequality  (\ref{part}) also tells us that
$$
\int_{M} |\nabla \psi |^{2}d\mu \geq 
\int_M\left(-2\sqrt{\frac{2}{3}}|W_+|-\frac{s}{3}\right)|\psi |^{2} d\mu , 
$$
and combining these facts yields 
$$
0\geq \int_M\left[  \left( s-\sqrt{6} |W_+|\right) |\psi |^{2} 
 + \frac{3}{2} f|\psi |^{3}\right] d\mu .
$$
Repeated use of  H\"older inequalities gives us 
$$
   \left(\int_M f^4d\mu\right)^{1/3}   \left( \int_M \left|s-\sqrt{6}|W_+|\right|^3
   f^{-2}d\mu\right)^{2/3} \geq 
  \int_M \left( \frac{3}{2} f |\psi  |\right)^2 d\mu ,   $$
  and application of Lemma \ref{compare} to the right-hand-side then yields
  \begin{equation}
\label{pinpoint}
 \left(\int_M f^4d\mu\right)^{1/3}   \left( \int_M \left|s-\sqrt{6}|W_+|\right|^3
   f^{-2}d\mu\right)^{2/3}    ~\geq 72\pi^2 [\corb(L)^+]^2~
\end{equation}
for any smooth positive function $f$. Choosing a sequence $f_j$ of smooth positive functions such that 
 $$ f_j \searrow \sqrt{\left|  s-\sqrt{6}|W_+|   \right|}$$
we  then have 
$$
\int_M f_j^4 d\mu \geq  72\pi^2 [\corb(L)^+]^2, 
$$
and taking the limit as $j\to \infty$  therefore gives us \begin{equation}
\label{voila}
\int_M \left(s-\sqrt{6}|W_+|\right)^{2}d\mu  \geq  72\pi^2 [\corb(L)^+]^2~
\end{equation}
as  desired. 
 
If equality holds, we must have $$s-\sqrt{6}|W_+| = \mbox{constant}$$
because $g$ minimizes the left-hand-side in its conformal class. 
If this constant were zero, we would then have $s= \sqrt{6}|W_+|\equiv 0$, 
since $g$ would otherwise be conformal to  a metric of positive scalar curvature.
However, we would also have $[\corb (L)^+]^2=0$, and our hypotheses exclude this because 
Proposition \ref{exclude} forces $b_+(M)=1$. Thus 
$$f =  \sqrt{\left|  s-\sqrt{6}|W_+|   \right|} $$
is a positive function, and 
we would necessarily obtain equality in \eqref{pinpoint} for this choice of positive function; and 
our use of Lemma \ref{compare} in the previous argument would  force $\psi$ to be self-dual harmonic. Moreover,  our use of the 
H\"older inequality guarantees that $|\psi|$ must be a non-zero constant. In particular, $\Phi$ is everywhere non-zero, and $\Phi\otimes \Phi$ 
now defines a non-zero section of $\Lambda^{2,0}_J \otimes L$, so that $L$ is actually the anti-canonical line bundle of $J$. 
Finally, the fact that \eqref{part} must be saturated guarantees that 
$\omega$ belongs to the lowest eigenspace of $W_+$, and  the fact that $s-\sqrt{6}|W_+|$ is a negative constant now translates
into the statement that $s+s^*$ is a negative constant. Thus $g$ is a saturated almost-K\"ahler metric with respect to the symplectic form $\omega$. 
 \end{proof}

While these Lemmas do provide us with  interesting lower bounds for certain curvature quantities, the right-hand-sides of the
inequalities still depend on the decomposition $H^2(M, \RR) = \mathcal{H}^+_g\oplus \mathcal{H}^-_g$, and so are
still somewhat metric-dependent. Fortunately, however, they do imply other estimates which do not suffer from this short-coming: 
 
  \begin{thm} \label{geometrize} 
  Let $(X, J_0)$ be a compact  almost-complex $4$-manifold, and 
   let $\Sigma \subset X$
  be a compact embedded pseudo-holomorphic curve.
  For some  integer $p\geq 2$, set $\beta = 1/p$, 
   let $(X, \Sigma , \beta)$ be the  orbifold obtained from $X$ by declaring the total angle around
  $\Sigma$ to be $2\pi\beta$, and let $(M,\Sigma,\beta )$ be the orbifold obtained by blowing up 
  $(X, \Sigma , \beta)$ at $\ell\geq 0$ points away from $\Sigma$. Give  $(M,\Sigma,\beta )$ 
  an almost-complex structure $J$ by first lifting $J_0$ to the orbifold $(X, \Sigma , \beta)$ , and then to its blow-up 
  $(M,\Sigma,\beta )$; and  let  $\mathfrak{c}$ be the  spin$^c$ structure on $(M,\Sigma,\beta )$ induced by $J$. 
  If  If $b_+(M) > 1$, suppose that $SW_\mathfrak{c} (M,\Sigma, \beta)\neq 0$; if $b_+(M) =1$, instead suppose that 
   $SW_\mathfrak{c}^\downarrow (M,\Sigma, \beta)\neq 0$, and that $\corb (X)$ is time-like or null past-pointing. 
 Then the curvature of
    any orbifold Riemannian metric $g$ on $(M,\Sigma,\beta )$ satisfies 
  \begin{eqnarray*}
\int_M s^2 d\mu &\geq& 32\pi^2 (\corb )^2(X, \Sigma, \beta ) \\
\int_M (s-\sqrt{6}|W_+|)^2 d\mu &\geq & 72\pi^2 (\corb )^2(X, \Sigma, \beta )
\end{eqnarray*}
and both inequalities are strict  unless $\ell=0$ and $g$ is an 
 orbifold K\"ahler-Einstein metric  on $(X,\Sigma,\beta )$ with negative scalar curvature.
  \end{thm}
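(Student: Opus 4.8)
The plan is to deduce the two inequalities from Lemmas~\ref{best} and~\ref{upscale}, which bound $\int_Ms^2$ and $\int_M(s-\sqrt6|W_+|)^2$ below by metric-dependent multiples of $[\corb(L)^+]^2$, by running those lemmas not only for the spin$^c$ structure $\mathfrak{c}$ of $J$ but for the whole family obtained from it by flipping the lift of $J_0$ across the individual exceptional summands, and then, for each orbifold metric, choosing the most favorable member of that family. First I would record the cohomological bookkeeping: writing $a:=\corb(X,\Sigma,\beta)=c_1(X)+(\beta-1)[\Sigma]$ and letting $E_1,\dots,E_\ell$ be the exceptional classes (so $E_i\cdot E_j=-\delta_{ij}$, and $E_i\cdot[\Sigma]=E_i\cdot c_1(X)=0$ because the blow-ups are disjoint from $\Sigma$), Lemma~\ref{plf} and the Chern-class blow-up formula $c_1(M,J)=c_1(X,J_0)-\sum_iE_i$ give $\corb(L)=a-\sum_iE_i$, with $a\cdot E_i=0$ and $a^2=(\corb)^2(X,\Sigma,\beta)$. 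For $\vec\epsilon\in\{\pm1\}^\ell$ let $\mathfrak{c}_{\vec\epsilon}$ denote the orbifold spin$^c$ structure with $\corb(L_{\vec\epsilon})=a+\sum_i\epsilon_iE_i$, obtained from the blown-up almost-complex structure by conjugating it near those exceptional spheres for which $\epsilon_i=+1$, so that $\mathfrak{c}=\mathfrak{c}_{(-1,\dots,-1)}$. Since the blow-up (connected-sum) formula for the Seiberg--Witten invariant is a gluing statement supported near the $\overline{\CP}_2$-summands, which lie in the smooth locus $M-\Sigma$, it holds verbatim in the orbifold setting; hence $SW_{\mathfrak{c}_{\vec\epsilon}}(M,\Sigma,\beta)$, and when $b_+(M)=1$ the chamber invariant $SW^\downarrow_{\mathfrak{c}_{\vec\epsilon}}(M,\Sigma,\beta)$, is independent of $\vec\epsilon$, so the hypothesis makes every one of them non-zero.

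Next, for a fixed orbifold metric $g$ I would decompose $H^2(M,\RR)=\mathcal{H}^+_g\oplus\mathcal{H}^-_g$, write $|\cdot|$ for the norm of the intersection form on the positive-definite summand, and choose $\epsilon_i=+1$ when $a^+\cdot E_i^+\geq0$ and $\epsilon_i=-1$ otherwise, so that
\[
[\corb(L_{\vec\epsilon})^+]^2=\Bigl|a^++\sum_i\epsilon_iE_i^+\Bigr|^2\ \geq\ |a^+|^2\ \geq\ a^2 ,
\]
the first inequality because this sign choice makes the cross terms $2\epsilon_i\,(a^+\cdot E_i^+)$ non-negative while $|\sum_i\epsilon_iE_i^+|^2\geq0$, and the second because $a^2=|a^+|^2-|a^-|^2$. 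If $b_+(M)=1$ this same choice keeps $\corb(L_{\vec\epsilon})^+$ past-pointing: here $\mathcal{H}^+_g=\RR\omega_g$ with $\omega_g$ future-pointing, $a\cdot\omega_g<0$ because $a$ is time-like or null past-pointing (this is where that hypothesis enters), and $\epsilon_iE_i^+\cdot\omega_g\leq0$, so $\corb(L_{\vec\epsilon})^+\cdot\omega_g<0$. Applying Lemmas~\ref{best} and~\ref{upscale} to $g$ with the spin$^c$ structure $\mathfrak{c}_{\vec\epsilon}$ then gives
\[
\int_Ms^2\,d\mu\ \geq\ 32\pi^2\,[\corb(L_{\vec\epsilon})^+]^2\ \geq\ 32\pi^2a^2 ,\qquad \int_M(s-\sqrt6\,|W_+|)^2\,d\mu\ \geq\ 72\pi^2\,[\corb(L_{\vec\epsilon})^+]^2\ \geq\ 72\pi^2a^2 ,
\]
which are the asserted estimates.

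For the equality statement, suppose first that $\int_Ms^2\,d\mu=32\pi^2a^2$. Then the whole chain above collapses: $E_i^+=0$ for every $i$ (positive-definiteness of $\mathcal{H}^+_g$), $a^-=0$, and the equality clause of Lemma~\ref{best} makes $g$ an orbifold K\"ahler metric of constant negative scalar curvature compatible with a complex structure $J'$ on $(M,\Sigma,\beta)$ with $\corb(M,J')=a+\sum_i\epsilon_iE_i$. Because $E_i^+=0$ and $a^-=0$, every $\mathfrak{c}_{\vec\epsilon'}$ now also realizes equality, with $\corb(L_{\vec\epsilon'})^+=a$ still past-pointing, so the same clause exhibits $g$ as K\"ahler for a complex structure with first Chern class $a+\sum_i\epsilon_i'E_i$; but a constant-scalar-curvature K\"ahler metric with $s<0$ has, up to sign, a unique compatible complex structure --- its K\"ahler form is the only self-dual harmonic $2$-form of constant norm, a negatively curved surface carrying no parallel holomorphic $(2,0)$-form --- so these $2^\ell$ classes take at most two values, $\pm(a+\sum_i\epsilon_iE_i)$. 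Reversing one sign $\epsilon_k$ and comparing then forces either $E_k=0$ or $a=0$; the former is absurd and the latter would give $\int_Ms^2\,d\mu=0$, against $s<0$, so $\ell=0$. Hence $M=X$ and $g$ is a constant-scalar-curvature K\"ahler orbifold metric on $(X,\Sigma,\beta)$ whose K\"ahler class is a negative multiple of $c_1$, which the standard trace argument (write the Ricci form as $\lambda[\omega]+i\partial\bar{\partial}\varphi$ with $\lambda<0$; constancy of $s$ makes $\varphi$ constant, so the Ricci form equals $\lambda[\omega]$) renders K\"ahler--Einstein with negative Einstein constant. If instead $\int_M(s-\sqrt6|W_+|)^2\,d\mu=72\pi^2a^2$, the equality clause of Lemma~\ref{upscale} makes $g$ a saturated almost-K\"ahler metric with $\corb=\corb(L_{\vec\epsilon})$ and $(\corb)^+$ a negative multiple of the symplectic class; I would run the same uniqueness argument --- a saturated almost-K\"ahler metric has, up to sign, a unique compatible symplectic form of constant norm, namely the one spanning the (simple) lowest eigenspace of $W_+$ --- to conclude $\ell=0$, and then upgrade the saturated almost-K\"ahler structure to a K\"ahler--Einstein one by combining the Blair-connection identities \eqref{bla1}--\eqref{bla2} with the constraint $\corb=-\lambda[\omega]$, which should force $F_{\zap B}^-=0$ and hence $\nabla\omega=0$. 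Conversely, a K\"ahler--Einstein orbifold metric on $(X,\Sigma,\beta)$ with $s<0$ has constant scalar curvature and is saturated almost-K\"ahler (its $W_+$ has eigenvalues $s/6<0$ and $-s/12$ twice, $\omega$ lies in the lowest eigenspace, and $s+s^*=2s<0$ is constant), so when $\ell=0$ both inequalities become equalities; this yields the strictness assertion.

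The propagation of Seiberg--Witten non-vanishing to all $2^\ell$ blown-up spin$^c$ structures, and the classical passage from constant-scalar-curvature K\"ahler to K\"ahler--Einstein once the K\"ahler class is proportional to $c_1$, I expect to be routine. The part that will take real work is the final link of the equality analysis: upgrading the ``saturated almost-K\"ahler'' conclusion of Lemma~\ref{upscale} to genuine integrability and the Einstein condition --- i.e.\ proving $\nabla\omega=0$ --- which will require a careful manipulation of the almost-K\"ahler Weitzenb\"ock and Blair-connection identities under the cohomological constraint, together with verifying that the locus where $W_+$ vanishes causes no trouble in the uniqueness step.
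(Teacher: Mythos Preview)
Your overall strategy is the same as the paper's: propagate the Seiberg--Witten hypothesis to all $2^\ell$ spin$^c$ structures with $\corb = a + \sum_i \epsilon_i E_i$, then for each metric choose signs so that $[\corb(L_{\vec\epsilon})^+]^2 \geq |a^+|^2 \geq a^2$, and invoke Lemmas~\ref{best}--\ref{upscale}. (The paper obtains the $2^\ell$ structures via self-diffeomorphisms supported near the exceptional spheres rather than a blow-up formula, but the effect is identical.) The inequalities are thus fine.

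Your equality analysis, however, has a gap. From the collapse of the chain you get $a^-=0$, $a^+\cdot E_i^+=0$, and $\sum_i\epsilon_i E_i^+=0$; you \emph{cannot} conclude $E_i^+=0$ individually, since the $E_i^+$ need not be linearly independent in $\mathcal{H}^+_g$ (indeed when $b_+=1$ they all lie in a line). Consequently your claim that \emph{every} $\vec\epsilon'$ also realizes equality is unjustified, and the ``flip one sign'' comparison breaks down. The paper avoids this by flipping \emph{all} signs simultaneously: since $\sum_i(-\epsilon_i)E_i^+=-\sum_i\epsilon_iE_i^+=0$, the structure $\mathfrak{c}_{-\vec\epsilon}$ also saturates the inequality. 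Now Lemmas~\ref{best}--\ref{upscale} force both associated (almost-)K\"ahler forms $\omega$, $\tilde\omega$ to be harmonic representatives of the \emph{same} negative multiple of $(\corb)^+=a$, with $|\omega|=|\tilde\omega|\equiv\sqrt2$; uniqueness of harmonic representatives gives $\omega=\tilde\omega$ directly, with no need to analyze the eigenspaces of $W_+$ or worry about the locus where $W_+$ vanishes. Since the two Chern classes then coincide while differing by $2\sum_i\epsilon_iE_i$, one gets $\ell=0$ immediately.

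For the final upgrade from saturated almost-K\"ahler to K\"ahler--Einstein (which you correctly flag as the substantive step), the paper does not use the cohomological constraint to force $F_{\zap B}^-=0$ pointwise. Instead, once $\ell=0$ and $(\corb)^-=0$, the harmonic representative of $\corb$ is self-dual, so $F_{\zap B}^-$ is $L^2$-orthogonal to the harmonic forms; one then computes $(\corb)^2$ two ways --- once via $\int|F_{\zap B}^+|^2$ using \eqref{bla1}, once via the Gauss--Bonnet formula \eqref{urbi} using the saturated constraint $|W_+|^2=(3s^*-s)^2/96$ and the pointwise bound $|\mathring r|^2\geq(s^*-s)^2/16$ coming from \eqref{bla2} --- to obtain $\int_M s^*(s^*-s)\,d\mu\geq 0$. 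But saturation gives $s^*\leq(s+s^*)/4<0$ while $s^*-s=|\nabla\omega|^2\geq0$, so the integrand is nonpositive; hence $s^*\equiv s$, i.e.\ $\nabla\omega=0$, and equality in the Ricci bound gives $\mathring r=0$. Your sketch points at the right ingredients but misidentifies the mechanism.
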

  \begin{proof}
  If $(\corb )^2(X, \Sigma, \beta ) < 0$, there is nothing to prove, so we may assume from the outset that 
  $(\corb )^2(X, \Sigma, \beta ) \geq 0$. When $b^+(X)=1$, this means that $\corb (X, \Sigma, \beta )$ 
  is either time-like or null. Our hypotheses moreover imply that it must be  past-pointing, and s its pull-back to 
  $M$ is therefore also  time-like or null  and past-pointing with respect to the given time-orientation. 
  Here we are using the fact that 
  $$
  H^2(M, \RR) = H^2(X,\RR) \oplus \underbrace{H^2(\overline{\CP}_2,\RR) \oplus \cdots \oplus H^2(\overline{\CP}_2,\RR)}_\ell
  $$
  by applying Mayer-Vietoris to $M\approx X \# \ell \overline{\CP}_2$; the summands are mutually orthogonal with respect to the
  intersection form, and the intersection form restricts to each of these subspaces as the intersection form of the 
  corresponding topological building block. 

  If $E_1 , \ldots , E_\ell\in H^2 (M)$  are standard  generators for the 
 $\ell$ copies of $H^2(\overline{\CP}_2,\RR)$ obtained by taking Poincar\'e duals of the $\ell$ projective lines $S_1, \ldots , S_\ell$ introduced
 by blow-up, then the given spin$^c$ structure $\mathfrak{c}$ has Chern class 
  $$
 \corb (M, \Sigma, \beta ) = \corb (X, \Sigma , \beta ) - E_1-\cdots -E_\ell~.
 $$
 However, there are self-diffeomorphisms \cite{FM,lno} of $M$ which are the identity outside a tubular neighborhood of a given $S_j$, but
 which send $E_j$ to $-E_j$. Applying these to $\mathfrak{c}$ give us $2^{\ell}$ different spin$^c$ structures, with 
   $$
\widetilde{\corb} (M, \Sigma, \beta ) = \corb (X, \Sigma , \beta ) \pm E_1\pm \cdots \pm E_\ell~
 $$
 for any desired choice of signs; 
 and when $b_+(M) = b_+(X)=1$, these diffeomorphisms moreover preserve the given time orientation of $M$. 
 Given a metric $g$ on $M$, let us  now choose  $\epsilon_j= \pm E_j$ so that  
  $$[\corb (X, \Sigma , \beta )]^+ \cdot  \epsilon_j^+= [\corb (X, \Sigma , \beta )]^+ \cdot  \epsilon_j \geq 0$$
  and then observe that, when $b_+(M) =1$, $[\widetilde{\corb}]^+$ is  past-pointing  for this spin$^c$ structure and this metric. 
  Moreover, we then have 
  \begin{eqnarray*}
\Big( [\widetilde{\corb} (M, \Sigma, \beta )]^+\Big)^2 &=&  \Big( [{\corb} (X, \Sigma, \beta )]^+\Big)^2+ 2[\corb (X, \Sigma , \beta )]^+ \cdot  \sum_j \epsilon_j^+
+ (\sum_j \epsilon_j^+)^2
\\  &\geq& \Big( [{\corb} (X, \Sigma, \beta )]^+\Big)^2 
\\   &\geq &  \Big( {\corb} (X, \Sigma, \beta )\Big)^2 ~
\end{eqnarray*}
so 
  Lemmas \ref{best} and \ref{upscale} immediately give us the two promised curvature inequalities.

  If equality were to hold in either of these inequalities, we would necessarily have ${\corb} (X, \Sigma, \beta )^-=0$, and 
  $ [\corb (X, \Sigma , \beta )]^+ \cdot \epsilon_j=0$ for $j=1,\ldots , \ell$. In particular, if we replace  $\epsilon_j$ 
  with $-\epsilon_j$ for each $j=1,\ldots , \ell$, we would obtain a second spin$^c$ structure which also saturated the relevant
  inequality. But  these two spin$^c$ structures have identical $(\corb)^+$ with respect to the given metric. 
  Lemma \ref{best} or \ref{upscale} then tell us that 
   $g$ is  almost-K\"ahler  with respect to 
   two symplectic forms $\omega$ and $\tilde{\omega}$ which are harmonic representatives of the same negative multiple
  of $(\corb)^+$ and  whose Chern classes differ by $2 \sum_j \epsilon_j$. The latter of
  course implies that $\omega= \tilde{\omega}$, so that we must have $\sum_{j=1}^\ell \epsilon_j=0$, and this  can only happen if  $\ell =0$. 
  
  Thus equality implies that $M=X$,  and that $(\corb)^-=0$. Moreover, we know that 
  $g$ is a saturated almost-K\"ahler metric, and that the curvature $F_{\zap B}$ of the Blair connection 
   is the harmonic representative of $-2\pi i \corb$. In particular, in our present situation,   $F_{\zap B}$ is a self-dual form. However, 
by specializing  (\ref{bla1}--\ref{bla2}) to the saturated case, the curvature of the Blair connection of such a metric is given by 
 $$
 iF_{\zap B}^+ = \frac{s+s^*}{8}\omega, \qquad 
iF_{\zap B}^- =   \frac{s-s^* }{8}\hat{\omega}+ \mathring{\varrho},
 $$  
   where 
$s+s^*$ is a non-positive constant,  $\mathring{\varrho}$ encodes the $J$-invariant piece of the trace-free Ricci curvature $\mathring{r}$,
and where   the bounded anti-self-dual $2$-form 
 $\hat{\omega}\in \Lambda^-$
 satisfies 
 $|\hat{\omega}|\equiv \sqrt{2}$, 
but
 is  defined 
  only on the open set where $s^* - s\neq 0$. 
  In the present situation, however,  we also know that  $F_{\zap B}^-=0$, so  $\mathring{\varrho}=  (s^* -s) \hat{\omega}/8$, and 
  \begin{equation}
  \label{tip}
|\mathring{r}|^2 \geq \frac{(s^*-s)^2}{16}
\end{equation}
  everywhere,  with equality precisely at the points where 
the Ricci tensor $r$ is $J$-invariant. On the other hand, the algebraic constraint on  $W_+$ imposed by 
Definition \ref{fatso} implies that 
 $$|W_+|^2 = \frac{(3s^*-s)^2}{96}.$$
 Thus, the Gauss-Bonnet-type formula \eqref{urbi} tells us that 
 \begin{eqnarray*}
4\pi^2 (\corb )^2 (M,\Sigma, \beta)  & = & \int_M \left(\frac{s^2}{24}+ 2 |W_+|^2 - \frac{|\mathring{r}|^2}{2}\right)d\mu  \\
 & = &  \int_M   \left(\frac{s^2}{24}+  \frac{2(3s^*-s)^2}{96}- \frac{|\mathring{r}|^2}{2}\right)d\mu \\
  & \leq &    \int_M \left(\frac{s^2}{24}+  \frac{2(3s^*-s)^2}{96}- \frac{(s^*-s)^2}{32}\right)d\mu \\
   & = &  \frac{1}{32}  \int_M  \left(s^2-2ss^* +5(s^*)^2\right)d\mu ,
\end{eqnarray*}
with equality iff equality holds in \eqref{tip}. On the other hand, since $F_{\zap B}=F_{\zap B}^+$,  
\begin{eqnarray*}
4\pi^2 (\corb )^2 (M,\Sigma, \beta) 
 & = &  \int_M  (\frac{s+s^*}{8}\omega )\wedge  (\frac{s+s^*}{8}\omega ) \\
  & =&  \frac{1}{32}   \int_M  \left(s^2+2ss^* +(s^*)^2\right)d\mu ~.
\end{eqnarray*}
We therefore know that 
$$
\int_M  \left(s^2-2ss^* +5(s^*)^2\right)d\mu \geq   \left(s^2+2ss^* +(s^*)^2\right)d\mu , 
$$
or in other words that 
\begin{equation}
\label{gizmo}
\int_M  4s^*(s^*-s)  d\mu \geq  0 ,
\end{equation}
with    equality iff  equality holds in \eqref{tip}.
However, since $(M,g,\omega)$ is saturated,  $s^*+s$  is a negative constant,
and $W_+(\omega , \omega )  \leq 0$; hence 
$s^*\leq s/3$, and  $s^* \leq (s+s^*)/4$ is therefore negative everywhere. 
Since 
 $s^*-s = |\nabla \omega |^2 \geq 0$  on  any almost-K\"ahler  orbifold, we thus have 
$$s^* (s^*-s) \leq 0$$
everywhere on $M$, 
with equality only at points where $s=s^*$. 
The  inequality (\ref{gizmo}) therefore  implies 
 $$|\nabla \omega |^2 = s^*-s\equiv 0,$$ so that   $(M,g,\omega)$ is K\"ahler.
But  equality in (\ref{gizmo}) implies that equality also holds in \eqref{tip}, so 
$|\mathring{r}|^2 \equiv (s^*-s)^2/16$, and hence  
 $\mathring{r} \equiv 0$. Thus 
 $(M,g)$ is  K\"ahler-Einstein, with negative scalar curvature, as promised. 
   \end{proof}

 Theorem \ref{oui} is now an immediate consequence of Theorems \ref{crux} and  \ref{geometrize}, 
in conjunction with  the  explicit formula 
 $$\corb (X,\Sigma , \beta)  = c_1 (X) + (\beta -1) [\Sigma ]$$
 for the orbifold Chern class provided by  Lemma \ref{plf}. 
 
\section{Inequalities for Einstein Metrics} \label{concrete}

With the results of the previous section in hand, we are finally in a position to  prove Theorem \ref{non}. For this purpose, the key observation is the following:

  \begin{prop} \label{tinker} Let $X,M, \Sigma, \beta$ and $\mathfrak{c}$ be as in Theorem \ref{geometrize}. 
 Then the curvature of
    any orbifold  metric $g$ on $(M,\Sigma,\beta )$ satisfies 
    \begin{equation}
\label{ami}
\frac{1}{4\pi^2}\int_M \left(\frac{s^2}{24} + 2|W_+|^2\right)d\mu > \frac{2}{3} \left[\corb (X,\Sigma, \beta)\right]^2.
\end{equation}
  \end{prop}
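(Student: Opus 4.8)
The plan is to derive \eqref{ami} from the second curvature estimate of Theorem \ref{geometrize} by means of an elementary pointwise inequality relating $s$ and $|W_+|$, and then to recover the \emph{strict} inequality from the discussion of the equality case in that theorem. If $(\corb)^2(X,\Sigma,\beta)<0$ there is nothing to prove, since the right-hand side of \eqref{ami} is then negative while the left-hand side is non-negative; so I may assume $(\corb)^2(X,\Sigma,\beta)\geq 0$.

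The point is the following identity, valid at every point of $M$:
\[
\frac{s^2}{24}+2|W_+|^2-\frac{1}{27}\bigl(s-\sqrt6\,|W_+|\bigr)^2=\left(\frac{s}{6\sqrt6}+\frac{4}{3}\,|W_+|\right)^2 .
\]
One checks this simply by expanding: the quadratic form in $s$ and $|W_+|$ obtained by moving everything to the left-hand side has discriminant zero, hence is a perfect square, and in particular is non-negative. Integrating this inequality over $M$ and then applying the second estimate of Theorem \ref{geometrize} yields
\[
\int_M\!\Bigl(\frac{s^2}{24}+2|W_+|^2\Bigr)\,d\mu\;\geq\;\frac{1}{27}\int_M\bigl(s-\sqrt6\,|W_+|\bigr)^2\,d\mu\;\geq\;\frac{72\pi^2}{27}\,(\corb)^2(X,\Sigma,\beta)=\frac{8\pi^2}{3}\,(\corb)^2(X,\Sigma,\beta),
\]
which is precisely \eqref{ami} with ``$\geq$'' in place of ``$>$''.

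To upgrade this to a strict inequality, suppose equality held in \eqref{ami}. Then both inequalities in the displayed chain are equalities. Equality in the first forces, by the pointwise identity above, $s=-8\sqrt6\,|W_+|$ almost everywhere, and in particular $s\leq 0$. Equality in the second is controlled by Theorem \ref{geometrize}: it forces $\ell=0$ and forces $g$ to be an orbifold K\"ahler--Einstein metric on $(X,\Sigma,\beta)$ with negative scalar curvature. But any K\"ahler metric in real dimension $4$ satisfies $|W_+|=|s|/(2\sqrt6)$, so here $\sqrt6\,|W_+|=-s/2$; substituting into $s=-8\sqrt6\,|W_+|$ gives $s=4s$, i.e.\ $s\equiv 0$, contradicting $s<0$. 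Hence equality cannot occur and \eqref{ami} is strict. This argument automatically handles the borderline case $(\corb)^2(X,\Sigma,\beta)=0$: there the right-hand side of \eqref{ami} is $0$, and if the left-hand side vanished then $s\equiv 0$ and $W_+\equiv 0$, so both displayed inequalities would again be equalities, producing the same contradiction.

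The step I expect to require the most care is the use of the equality case of Theorem \ref{geometrize}: it is essential that the theorem delivers a K\"ahler--Einstein metric of \emph{negative} scalar curvature (not merely a constant-scalar-curvature K\"ahler metric), since it is exactly the sign $s<0$ that contradicts the relation $s=-8\sqrt6\,|W_+|$ forced by the pointwise identity. Everything else is routine: the pointwise inequality is pure algebra, and the integration and invocation of Theorem \ref{geometrize} are immediate.
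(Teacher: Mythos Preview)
Your proof is correct and reaches the same conclusion as the paper's, but the mechanism is genuinely different. The paper works at the level of $L^2$ norms: starting from $\|s-\sqrt6\,|W_+|\|_{L^2}\geq (72\pi^2[\corb]^2)^{1/2}$, it applies the triangle inequality to get $\|s\|+\sqrt6\,\|W_+\|\geq(72\pi^2[\corb]^2)^{1/2}$, and then interprets the left side as a dot product in $\RR^2$ and uses Cauchy--Schwarz to bound it by $(9/8)^{1/2}(\|s\|^2+48\|W_+\|^2)^{1/2}$. You instead prove the sharper \emph{pointwise} inequality $s^2/24+2|W_+|^2\geq\frac{1}{27}(s-\sqrt6\,|W_+|)^2$ via a perfect-square identity, and integrate. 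Both routes produce the same constant and both feed the equality case back into Theorem~\ref{geometrize}; the contradiction with the K\"ahler identity $|W_+|=|s|/(2\sqrt6)$ is identical in spirit (the paper phrases it as the incompatibility of the Cauchy--Schwarz proportionality $(1,1/\sqrt8)\propto(\|s\|,\sqrt{48}\,\|W_+\|)$ with $\|s\|=\sqrt{24}\,\|W_+\|$). Your approach is a bit more self-contained, since the single algebraic identity replaces the two-step triangle/Cauchy--Schwarz argument; the paper's approach has the advantage of making transparent exactly which standard inequalities are being saturated.
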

\begin{proof}
Since there is otherwise nothing to prove, we may henceforth assume that $\left[\corb (X,\Sigma, \beta)\right]^2\geq 0$. With this proviso, 
the second inequality of Theorem  \ref{geometrize} tells us that 
$$
\Big\| s- \sqrt{6} |W_+|\Big\|_{L^2} \geq \left( 72\pi^2 \left[\corb (X,\Sigma, \beta)\right]^2\right)^{1/2}
$$
and the triangle inequality therefore yields 
$$
\| s \|_{L^2}+ \sqrt{6}~ \|W_+ \|_{L^2} \geq \left( 72\pi^2 \left[\corb (X,\Sigma, \beta)\right]^2\right)^{1/2},
$$
with equality only if  $g$ is K\"ahler-Einstein, with negative scalar curvature. However, the left-hand side can be interpreted as a
dot product
$$(1 , {\frac{1}{\sqrt{8}}}) \cdot \left(\|s\|, 
 \sqrt{48} \|W_{+}\|\right) 
$$
in $\RR^2$, and the Cauchy-Schwarz inequality thus tells us that 
$$
\left(1+\frac{1}{8}\right)^{1/2} \left( \|s\|^2_{L^2} + 48 \|W_{+}\|^2_{L^2}\right)^{1/2} \geq \left( 72\pi^2 \left[\corb (X,\Sigma, \beta)\right]^2\right)^{1/2},
$$
with equality only if $(1 , {\frac{1}{\sqrt{8}}})\propto \left(\|s\|, 
 \sqrt{48} \|W_{+}\|\right) $, and  $g$ is K\"ahler-Einstein with negative scalar curvature. However since $\|s\|= \sqrt{24}\|W_+\|$ for any K\"ahler metric, equality can never actually occur. 
  Squaring both sides and dividing by $108\pi^2$ thus yields the promised strict
 inequality. 
\end{proof}

\begin{cor} \label{tailor} 
Let $X,M, \Sigma, \beta$ and $\mathfrak{c}$ be as in Theorem \ref{geometrize},
so that $M\approx X\# \ell \overline{\CP}_2$. 
If  $(M,\Sigma, \beta)$ admits an orbifold Einstein metric $g$, then 
    $$\ell <   \frac{1}{3} ( \corb)^2 (X,\Sigma, \beta).$$
\end{cor}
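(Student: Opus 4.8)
The plan is to combine the Gauss--Bonnet-type identity \eqref{urbi}, applied to the hypothetical Einstein metric, with the strict curvature inequality of Proposition \ref{tinker} and the elementary behavior of $(\corb)^2$ under blowing up. Since the hypotheses on $X$, $M$, $\Sigma$, $\beta$ and $\mathfrak{c}$ are by assumption those of Theorem \ref{geometrize}, all the analytic input is already available, and the argument is essentially bookkeeping.

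First I would feed the assumed orbifold Einstein metric $g$ on $(M,\Sigma,\beta)$ into \eqref{urbi}. Because $g$ is Einstein, its trace-free Ricci tensor $\mathring{r}$ vanishes identically, so \eqref{urbi} collapses to
$$(\corb )^2 (M, \Sigma , \beta) = \frac{1}{4\pi^2}\int_M \left(\frac{s^2}{24} + 2|W_+|^2\right)d\mu .$$
Next I would apply Proposition \ref{tinker} to this same metric $g$, which gives
$$\frac{1}{4\pi^2}\int_M \left(\frac{s^2}{24} + 2|W_+|^2\right)d\mu > \frac{2}{3}\left[\corb (X,\Sigma, \beta)\right]^2 .$$
Chaining the two displays yields the strict inequality $(\corb )^2 (M, \Sigma , \beta) > \tfrac{2}{3}(\corb )^2 (X, \Sigma , \beta)$.

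Finally I would invoke the purely topological fact, already recorded in the proof of Theorem \ref{geometrize}, that $\corb (M,\Sigma,\beta) = \corb (X,\Sigma,\beta) - E_1 - \cdots - E_\ell$, where the exceptional classes $E_j$ are mutually orthogonal of self-intersection $-1$ and orthogonal to the image of $H^2(X,\RR)$; expanding the square gives $(\corb )^2(M,\Sigma,\beta) = (\corb )^2(X,\Sigma,\beta) - \ell$. Substituting this into the inequality above produces $(\corb )^2(X,\Sigma,\beta) - \ell > \tfrac{2}{3}(\corb )^2(X,\Sigma,\beta)$, which rearranges at once to $\ell < \tfrac{1}{3}(\corb )^2(X,\Sigma,\beta)$.

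I do not anticipate a genuine obstacle: the entire force of the result resides in Theorem \ref{geometrize} and Proposition \ref{tinker}. The two small points worth stating carefully are that \eqref{urbi} is valid for the orbifold $(M,\Sigma,\beta)$ equipped with the lifted almost-complex structure $J$ (it was noted there to hold for an arbitrary orbifold metric), and that it is precisely the \emph{strictness} in Proposition \ref{tinker} that upgrades the conclusion from $\ell \le \tfrac{1}{3}(\corb)^2(X,\Sigma,\beta)$ to the strict bound; the case $(\corb)^2(X,\Sigma,\beta) < 0$ is then handled automatically, since the derived inequality would force $\ell$ to be negative, contradicting $\ell \ge 0$.
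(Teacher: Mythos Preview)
Your proof is correct and follows essentially the same route as the paper: apply \eqref{urbi} with $\mathring{r}=0$ to identify the curvature integral with $(\corb)^2(M,\Sigma,\beta)$, invoke Proposition \ref{tinker} for the strict lower bound, and use the blow-up relation $(\corb)^2(M,\Sigma,\beta) = (\corb)^2(X,\Sigma,\beta) - \ell$ to conclude. The only cosmetic difference is that the paper derives this last relation by directly expanding $(c_1(M)+(\beta-1)[\Sigma])^2$ rather than by citing the exceptional-class formula from the proof of Theorem \ref{geometrize}, but the content is identical.
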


\begin{proof} When the orbifold metric $g$ on $(M,\Sigma, \beta)$ is Einstein, \eqref{urbi} asserts that 
$$\frac{1}{4\pi^2}\int_M \left(\frac{s^2}{24} + 2|W_+|^2\right)d\mu = ( \corb)^2 (M,\Sigma, \beta).$$
However,   notice that 
\begin{eqnarray*}
( \corb)^2 (M,\Sigma, \beta) &=& (c_1 (M) + (\beta -1) [\Sigma ])^2 \\
&=& c_1^2 (M) + 2(\beta -1) ~c_1(M)\cdot [\Sigma ] + (\beta -1)^2 [\Sigma ]^2 \\
&=& c_1^2 (X) -\ell + 2(\beta -1) ~c_1(X)\cdot [\Sigma ] + (\beta -1)^2 [\Sigma ]^2 \\
&=& ( \corb)^2 (X,\Sigma, \beta) -\ell . 
\end{eqnarray*}
If  $(M,\Sigma, \beta)$ admits  an orbifold Einstein metric $g$,   \eqref{ami} therefore says that 
$$
 ( \corb)^2 (X,\Sigma, \beta) -\ell >  \frac{2}{3} ( \corb)^2 (X,\Sigma, \beta)
$$
and it follows that $ ( \corb)^2 (X,\Sigma, \beta)/3 > \ell$, as claimed. 
\end{proof} 

 Assuming the non-vanishing of a suitable Seiberg-Witten invariant,   the contrapositive of Corollary \ref{tailor}  is that 
$(M, \Sigma, \beta )$ does not admit an orbifold Einstein metric if 
$$
\ell \geq \frac{1}{3} ( \corb)^2 (X,\Sigma, \beta). 
$$ 
On the other hand, Theorem \ref{crux} gives a geometric criterion sufficient  to guarantee that 
 the relevant Seiberg-Witten invariants are indeed non-zero. Moreover, 
Proposition \ref{reggie} asserts that any edge-cone metric on $M$ of edge-cone
angle $2\pi \beta$, $\beta = 1/p$, is actually orbifold Einstein  metric on $(M, \Sigma, \beta )$. 
Given  the explicit formula
 $$\corb (X,\Sigma , \beta)  = c_1 (X) + (\beta -1) [\Sigma ]$$
 provided by  Lemma \ref{plf}, 
 Theorem \ref{non}  therefore follows. 
 
 \bigskip 
 
 Let us now illustrate Theorem \ref{non} via some concrete examples. 
 
 \begin{xpl} Let $X\subset \CP_3$ be a quadric surface, let $Y\subset \CP_3$ be a cubic surface which meets
 $X$ transversely, and let $\Sigma = X\cap Y$ be the genus $4$ curve in which they intersect. Let $M\approx X\# \overline{\CP_2}$ be the 
blow-up of $X$ at a point not belonging to $\Sigma$. Then 
$$(\corb (X,\Sigma , \beta ))^2 = \left( c_1 (X) +(\beta -1) [\Sigma ]\right)^2 = 2(1- 3\beta)^2$$
so  $(\corb (X,\Sigma , \beta ))^2/3 < 1= \ell$ for all $\beta = 1/p$, $p\geq 2$. On the other hand, 
  $(\corb (X,\Sigma , \beta ))^2 < 1$ provided that $2\leq p \leq 10$. 
It follows that $(M,\Sigma, \beta )$ does not admit orbifold Einstein metrics for any $p$;  consequently,  there exist no Einstein edge-cone metrics
on $(M,\Sigma)$  of cone angle $2\pi \beta$ for any $\beta = 1/p$, $p\geq 2$ an integer. Indeed, when $p> 3$, inequality \eqref{common} holds
if we take  $\omega_0$ to be the restriction of the Fubini-Study K\"ahler form to $X$, so in this range the claim follows from Theorem \ref{non}. 
Proposition \ref{htal} rules out the existence of Einstein edge-cone metrics for the two cases $p=2$ or $3$ not covered by this argument, 
and moreover also applies to an interval of  real-valued $\beta$, but does not suffice to prove the assertion when the cone angle is small. 
  \end{xpl}

\begin{xpl} Let $Y\subset \CP_3$ once again be a cubic surface, and let $\Sigma =  Y\cap X$ once again   be its intersection with a generic quadric.
This time, however, let us instead consider orbifold versions of the 
the manifold $M\approx Y\# \overline{\CP}_2$ obtained by blowing up $Y$ at a point not belonging to $\Sigma$. Then 
$$(\corb (Y,\Sigma , \beta ))^2 = \left( c_1 (Y) +(\beta -1) [\Sigma ]\right)^2 = 3(1- 2\beta)^2$$
so once again $(\corb (Y,\Sigma , \beta ))^2/3 < 1= \ell$ for all $\beta = 1/p$, $p\geq 2$, while $(\corb (Y,\Sigma , \beta ))^2 < 1= \ell$
when $p= 2$, $3$, or $4$. Once again, there are no orbifold Einstein metrics on $(M,\Sigma, \beta )$ 
for any $p$, and  consequently no Einstein edge-cone metrics
on $(M,\Sigma)$  of cone angle $2\pi \beta$ for any $\beta = 1/p$, $p\geq 2$ an integer. This time, \eqref{common}
holds for all $p> 2$, so we only need to use Proposition \ref{htal} for one single case. 
\end{xpl}

\begin{lem}
\label{soldier}
For some integer $p\geq 2$, set $\beta=1/p$, and let 
 $(M,\Sigma, \beta)$ and $\mathfrak{c}$ be as in Lemma \ref{best}. 
If $b_+(M) > 1$,  suppose  that $SW_{\mathfrak{c}}(M,\Sigma,\beta) \neq 0$; if $b_+(M) = 1$, instead suppose that $\corb (L)$ is time-like past-pointing, and that 
$SW^\downarrow_{\mathfrak{c}}(M,\Sigma,\beta)  \neq 0$.  If $g$ is an orbifold Einstein metric on $(M,\Sigma,\beta )$, then 
   its scalar and self-dual Weyl curvatures satisfy 
$$\int_M \frac{s^2}{24}d\mu \geq \int_M |W_+|^2 d\mu ,$$
with equality only if $g$ is K\"ahler-Einstein, of negative scalar curvature. 
\end{lem}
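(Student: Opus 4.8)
The plan is to deduce the inequality by combining the Gauss-Bonnet-type identity \eqref{urbi} with the scalar-curvature estimate of Lemma \ref{best}. First I would use the Einstein condition: since $\mathring{r}\equiv 0$, the identity \eqref{urbi} reduces to
$$\frac{1}{4\pi^2}\int_M\left(\frac{s^2}{24}+2|W_+|^2\right)d\mu = (\corb)^2(M,\Sigma,\beta),$$
so that, writing $c^2:=(\corb)^2(M,\Sigma,\beta)$, we automatically have $c^2\geq 0$, together with $\int_M|W_+|^2\,d\mu = 2\pi^2 c^2 - \frac{1}{48}\int_M s^2\,d\mu$. A one-line manipulation then shows that the asserted inequality $\int_M\frac{s^2}{24}\,d\mu\geq\int_M|W_+|^2\,d\mu$ is equivalent to the single bound $\int_M s^2\,d\mu\geq 32\pi^2 c^2$.

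Second, I would obtain exactly this bound from Lemma \ref{best}. The hypotheses on the Seiberg-Witten invariant place us in its setting; when $b_+(M)=1$ one also notes that the assumption that $\corb(L)$ is time-like past-pointing forces $\corb(L)^+$ to be past-pointing with respect to every orbifold metric $g$, so that Lemma \ref{best} applies and gives $\int_M s^2\,d\mu\geq 32\pi^2[\corb(L)^+]^2$. Since the spaces $\mathcal{H}^+_g$ and $\mathcal{H}^-_g$ are mutually orthogonal and the intersection form is negative-definite on $\mathcal{H}^-_g$, one has $[\corb(L)^+]^2\geq[\corb(L)]^2=c^2$, and the desired inequality follows.

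For the equality case I would run the chain in reverse: equality forces $\int_M s^2\,d\mu = 32\pi^2 c^2$, hence both equality in Lemma \ref{best} and $[\corb(L)^+]^2=c^2$. If $\corb(L)^+\neq 0$, the equality clause of Lemma \ref{best} makes $g$ an orbifold K\"ahler metric of constant negative scalar curvature, which together with the Einstein condition is K\"ahler-Einstein of negative scalar curvature, as claimed. The delicate point, and the step I expect to require the most care, is ruling out the alternative $\corb(L)^+=0$: this would force $c^2=0$, hence $s\equiv 0$, hence (the metric being Einstein) $g$ Ricci-flat, and then the displayed identity would force $W_+\equiv 0$ as well, so that $g$ is scalar-flat and anti-self-dual. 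This contradicts the time-like hypothesis when $b_+(M)=1$; and when $b_+(M)>1$, the non-vanishing of $SW_{\mathfrak{c}}$ excludes orbifold metrics of positive scalar curvature, so Proposition \ref{exclude} applies and forces $b_+(M)=1$, again a contradiction. Hence this branch is vacuous, which completes the argument.
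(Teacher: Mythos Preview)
Your proof is correct and follows essentially the same approach as the paper: combine the Einstein Gauss-Bonnet identity \eqref{urbi} with the scalar-curvature estimate, then do the algebra. The only difference is that the paper quotes Theorem \ref{geometrize} (with $\ell=0$), which already packages the step $[\corb(L)^+]^2\geq(\corb)^2$ and the full equality characterization as K\"ahler--Einstein, whereas you go back to Lemma \ref{best} and therefore have to handle the borderline case $\corb(L)^+=0$ by hand via Proposition \ref{exclude}; both routes amount to the same argument.
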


\begin{proof}
By the  first inequality of Theorem \ref{geometrize}, with $\ell=0$, we have 
$$
\frac{3}{4\pi^2}\int_M \frac{s^2}{24} d\mu \geq   (\corb)^2 (M, \Sigma , \beta ) ,
$$
with equality iff $g$ is K\"ahler-Einstein. 
On the other hand,  (\ref{gabo}--\ref{thom}) tells us that  we have
$$ (\corb )^2 (M, \Sigma , \beta ) =  \frac{1}{4\pi^2}\int_M \left(\frac{s^2}{24} + 2|W_+|^2\right)d\mu $$
for the orbifold Einstein metric $g$. Straightforward algebraic manipulation  then yields the desired  result. 
\end{proof}

This implies the following version of the generalized \cite{lno} Miyaoka-Yau inequality \cite{bes,yau}  for Einstein $4$-manifolds: 

\begin{thm} \label{sailor} 
For some integer $p\geq 2$, set $\beta=1/p$, and let 
 $(M,\Sigma, \beta)$ and $\mathfrak{c}$ be as in Lemma \ref{best}. 
If $b_+(M) > 1$,  suppose  that $SW_{\mathfrak{c}}(M,\Sigma,\beta) \neq 0$; if $b_+(M) = 1$, instead suppose that $\corb (L)$ is time-like  past-pointing, and that 
$SW^\downarrow_{\mathfrak{c}}(M,\Sigma,\beta)  \neq 0$.  If  $(M,\Sigma)$ admits an Einstein edge-cone metric  $g$ of 
cone angle $2\pi \beta$,  then
\begin{equation}
\label{richman}
(\chi - 3\tau )(M) \geq (1-\beta ) \left( \chi (\Sigma ) - (1+\beta ) [\Sigma ]^2 \right),
\end{equation}
   with equality iff  $[\Sigma]^2 = (p/2)\chi (\Sigma)$ and, up to constant rescaling,  $g$ is locally isometric to 
   the standard complex-hyperbolic metric on  $\CC\mathcal{H}_2= SU(2,1)/U(2)$.
   \end{thm}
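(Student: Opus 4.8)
The plan is to obtain \eqref{richman} by combining Lemma~\ref{soldier} with the orbifold Gauss--Bonnet and signature formulas \eqref{gabo}--\eqref{thom}. First I would apply Proposition~\ref{reggie}: because $\beta = 1/p$, the given Einstein edge-cone metric is in fact an orbifold Einstein metric $g$ on $(M,\Sigma,\beta)$, so that \eqref{gabo}--\eqref{thom} are available and $\mathring{r}\equiv 0$. Computing $\chi - 3\tau$ from these two identities, the topological contribution of $\Sigma$ collapses to $(1-\beta)\bigl(\chi(\Sigma) - (1+\beta)[\Sigma]^2\bigr)$ and the curvature terms combine to give
$$(\chi - 3\tau)(M) - (1-\beta)\bigl(\chi(\Sigma) - (1+\beta)[\Sigma]^2\bigr) = \tfrac{1}{8\pi^2}\int_M\bigl(\tfrac{s^2}{24} - |W_+|^2 + 3|W_-|^2\bigr)\,d\mu\, .$$
Since Lemma~\ref{soldier} --- whose hypotheses are exactly those of the present theorem --- gives $\int_M\frac{s^2}{24}\,d\mu \ge \int_M |W_+|^2\,d\mu$, and since $\int_M |W_-|^2\,d\mu \ge 0$, the right-hand side is non-negative. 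This is precisely \eqref{richman}.

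For the equality discussion, vanishing of the right-hand side forces both $\int_M\bigl(\frac{s^2}{24} - |W_+|^2\bigr)\,d\mu = 0$ and $W_-\equiv 0$. The equality clause of Lemma~\ref{soldier} then shows that $g$ is an orbifold K\"ahler--Einstein metric with $s < 0$; together with $W_-\equiv 0$, a classical fact (see, e.g., \cite{bes}) forces $g$ to have constant holomorphic sectional curvature, and hence, after rescaling, to be locally isometric to the standard complex-hyperbolic metric on $\mathbb{CH}_2 = SU(2,1)/U(2)$. For the converse direction of the stated equivalence, observe that if $g$ is locally complex-hyperbolic then $\mathring{r}\equiv 0$, $W_-\equiv 0$ and $|W_+|^2 \equiv s^2/24$ pointwise, so the displayed identity at once gives equality in \eqref{richman}.

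It remains to see that equality forces $[\Sigma]^2 = (p/2)\,\chi(\Sigma)$. Here I would use that, $g$ being a locally symmetric K\"ahler metric, the local uniformizing $\ZZ_p$-actions near $\Sigma$ are by holomorphic isometries, so their common fixed locus $\Sigma$ is a totally geodesic \emph{complex} curve in the orbifold, whose induced metric has constant Gaussian curvature equal to the holomorphic sectional curvature $\frac{2}{3}\lambda$ of $g$ (where $\mathrm{Ric}_g = \lambda g$, $\lambda < 0$). Comparing Chern--Weil representatives, the orbifold anticanonical $V$-bundle yields $\corb(M,\Sigma,\beta)\cdot[\Sigma] = \frac{\lambda}{2\pi}\,\mathrm{Area}(\Sigma)$, while Gauss--Bonnet for $\Sigma$ gives $\chi(\Sigma) = \frac{1}{2\pi}\cdot\frac{2}{3}\lambda\,\mathrm{Area}(\Sigma)$; hence $\corb(M,\Sigma,\beta)\cdot[\Sigma] = \frac{3}{2}\chi(\Sigma)$. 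On the other hand, Lemma~\ref{plf} together with ordinary adjunction in the underlying complex surface --- equivalently, orbifold adjunction along $\Sigma$, using $\corb(\hat N)\cdot[\Sigma] = \frac{1}{p}[\Sigma]^2$ for the orbifold normal $V$-bundle $\hat N$ --- gives $\corb(M,\Sigma,\beta)\cdot[\Sigma] = \chi(\Sigma) + \frac{1}{p}[\Sigma]^2$. Equating the two expressions yields $[\Sigma]^2 = (p/2)\,\chi(\Sigma)$, as desired.

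The inequality \eqref{richman} is thus a purely formal consequence of Lemma~\ref{soldier} and \eqref{gabo}--\eqref{thom}; the substance lies in the equality case. The main obstacle there is to check carefully that $\Sigma$ is a totally geodesic \emph{complex} submanifold for the complex structure that renders $g$ K\"ahler --- not merely for the auxiliary almost-complex structure $J_0$ used to set up the spin$^c$ structure --- and to fix the normalizations in the complex-hyperbolic proportionality so that the coefficient $p/2$ comes out exactly.
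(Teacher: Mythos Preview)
Your proposal is correct and follows the paper's approach almost exactly: the combination of Proposition~\ref{reggie}, the curvature identity coming from \eqref{gabo}--\eqref{thom}, and Lemma~\ref{soldier} is precisely what the paper does, and your treatment of the equality case (K\"ahler--Einstein with $s<0$ plus $W_-\equiv 0$ $\Rightarrow$ locally complex-hyperbolic) is the same argument stated slightly differently.

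The one place where you diverge from the paper is the derivation of $[\Sigma]^2 = (p/2)\chi(\Sigma)$. The paper argues pointwise: in an adapted frame for $\CC\mathcal{H}_1\subset\CC\mathcal{H}_2$ one has $\mathcal{R}_{1234}=\tfrac12\mathcal{R}_{1212}$, so the curvature of the normal bundle of $\Sigma$ in the local uniformizing cover is half that of its tangent bundle; since this orbifold normal bundle is a $p^{\rm th}$ root of the actual normal bundle, $c_1(N)=(p/2)\,c_1(T^{1,0}\Sigma)$. You instead compute $\corb(M,\Sigma,\beta)\cdot[\Sigma]$ two ways --- once via the Ricci form (giving $\tfrac32\chi(\Sigma)$, using that the holomorphic sectional curvature is $\tfrac23\lambda$) and once via Lemma~\ref{plf} plus adjunction (giving $\chi(\Sigma)+\tfrac1p[\Sigma]^2$) --- and equate. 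Both are valid and encode the same proportionality; your route is a bit more cohomological, the paper's a bit more curvature-tensor-explicit. The point you flag about $\Sigma$ being complex for the K\"ahler structure produced by the equality case (not just for the auxiliary $J_0$) is genuine, and is handled implicitly in the paper by the observation that the parallel K\"ahler form, being $\ZZ_p$-invariant in each local uniformizing chart, must lie in the fixed axis of $\Lambda^+$ under the local $\ZZ_p$-action, forcing $T\Sigma$ to be $J$-invariant.
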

\begin{proof} The Gauss-Bonnet-type formul{\ae} (\ref{gabo}--\ref{thom}) imply that 
\begin{eqnarray*}
(\chi - 3\tau )(M) &-& (1-\beta ) \Big( \chi (\Sigma ) - (1+\beta ) [\Sigma ]^2 \Big)
\\ &=&\frac{1}{8\pi^2}\int_M \left[\left( \frac{s^2}{24}-|W_+|^2\right) + 3|W_-|^2
-\frac{|\mathring{r}|^2}{2}
 \right] d\mu
\end{eqnarray*}
for any edge-cone metric $g$ on $(M,\Sigma)$ of any $\beta$. However, if the metric is Einstein and $\beta=1/p$ for some integer $p$, 
Proposition \ref{reggie} tells us that $g$ extends to $(M,\Sigma, \beta )$ as an Einstein orbifold metric, and Lemma \ref{soldier} 
then tells us that the right-hand-side is non-negative. The inequality thus follows. 

If the inequality is saturated, $W_-=0$, and Lemma \ref{soldier} moreover tells us that $g$ is a  K\"ahler-Einstein metric of negative scalar curvature. The $\End (\Lambda^+)$ block 
of the curvature tensor $\mathcal{R}$ in \eqref{curv}  is therefore a constant multiple of $\omega\otimes \omega$, and  the $\End (\Lambda^-)$ block is a constant multiple of the identity, while the off-diagonal blocks vanish.  Hence $\nabla \mathcal{R}=0$, and $g$ is locally symmetric. Since a globally symmetric spaces
is uniquely determined by the value of its curvature tensor at one point, it follows that $g$ is modeled on the complex-hyperbolic plane $\CC\mathcal{H}_2$,
up to a constant rescaling required to normalize the value of its scalar curvature. 
This done,  the  totally geodesic complex
curve $\Sigma$ then looks like $\CC\mathcal{H}_1\subset \CC\mathcal{H}_2$ in suitably chosen local uniformizing charts. However, in a local
orthonormal frame $e_1 , e_2 , e_ 3 , e_4$ for $\CC\mathcal{H}_2$ for which 
$e_2=Je_1$ and $e_4=Je_3$, one has $\mathcal{R}_{1234}=(1/2) \mathcal{R}_{1212}$,
so the curvature of the normal bundle of $\CC\mathcal{H}_1\subset \CC\mathcal{H}_2$ is half the curvature of its tangent bundle. 
Since this normal bundle  actually locally represents a $p^{\rm th}$ root of the normal bundle of $\Sigma$,  it follows that the normal bundle $N\to \Sigma$
satisfies $c_1(N) = (p/2) c_1(T^{1,0}\Sigma)$. In other words, we  have $[\Sigma]^2 = (p/2) \chi (\Sigma )$, as claimed. 
\end{proof}

When equality occurs, Theorem \ref{tailor} predicts that  
\begin{equation}
\label{poorman}
(\chi - 3\tau )(M) = -\frac{(p-1)^2}{2p} \chi (\Sigma).
\end{equation}
 It is therefore worth pointing out  that this  case does actually  occur. One  such example is provided by a beautiful construction \cite{polarchen,gost} 
 due  to Inoue. Let $\mathfrak{S}$ be a genus-$2$ complex curve on which 
 $\ZZ_5$ acts with $3$ fixed points, let $M\approx (\mathfrak{S}\times \mathfrak{S}) \# 3 \overline{\CP}_2$ be the blow-up of the product at the three corresponding
 points of the diagonal, and let $\Sigma \subset M$ be the disjoint union of the proper transforms of the graphs of the $5$ maps $\mathfrak{S}\to \mathfrak{S}$
 arising from the group action. Then $[\Sigma ]\in H^2(M,\ZZ)$ is divisible by $5$, and it follows that there is a $5$-fold cyclic
 branched cover  $Y\to M$ ramified at $\Sigma$. One checks that $Y$ saturates the Miyaoka-Yau inequality, and so is complex hyperbolic.
 The uniqueness of $\lambda < 0$ K\"ahler-Einstein metrics then predicts that the  complex hyperbolic metric is invariant under the 
 $\ZZ_5$-action, and so descends to   an orbifold Einstein metric on $(M,\Sigma, \beta )$, where $\beta = 1/5$. 
As a double-check of Theorem \ref{sailor}, notice that, in this example, we have 
 $$(\chi -3\tau)(M)= [(2-2\cdot 2)^2 + 3] - 3(-3)=16,$$
while  
 $$
  -\frac{(p-1)^2}{2p} \chi (\Sigma) =  -\frac{(5-1)^2}{2\cdot 5} 5 \chi (\mathfrak{S}) = 4^2 = 16.
 $$
 Thus \eqref{poorman} does indeed hold in this example, as predicted. 
 We leave it as an exercise for the  interested reader  to reprove the existence of the K\"ahler-Einstein edge metric $g$ on $(M,\Sigma, 1/5)$ 
 directly, using the latest techniques; 
 Theorem \ref{sailor} can then be used  to  give a different  proof of the fact that it is actually complex hyperbolic.  
 Of course,  infinitely many other examples exist that also that saturate \eqref{richman}; indeed, an infinite heirarchy of these 
  can be produced from the above example by simply  taking unbranched covers
 of $(\mathfrak{S}\times \mathfrak{S}) \# 3 \overline{\CP}_2$.
 
  \bigskip

 \bigskip 
 
 \noindent 
 {\bf Acknowledgements.} The author would  
like to thank  Xiuxiong Chen and Cliff Taubes  for encouragement  and 
 helpful suggestions.

  \end{document}